\begin{document}
\title{Nonlinear elliptic equations with a singular perturbation on compact Lie groups and homogeneous spaces
}


\author{Weiping Yan \and Yong Li      
}


\institute{College of of Mathematics, Jilin University, Changchun
130012, P.R. China.\\
Beijing International Center for Mathematical Research, Peking University, Beijing 100871, P.R. China.\\
\email{yan8441@126.com}}

\date{Received: date / Accepted: date}

\maketitle

\begin{abstract}
This paper is devoted to the study of a class of singular perturbation elliptic type problems on compact Lie groups or homogeneous spaces $\mathcal{M}$.
By constructing a suitable Nash-Moser-type iteration scheme on compact Lie groups and homogeneous spaces, we overcome the clusters of ``small divisor'' problem, then the existence of solutions for nonlinear elliptic equations with a singular perturbation is established. Especially, if $\mathcal{M}$ is the standard torus $\textbf{T}^n$ or the spheres $\textbf{S}^n$, our result shows that there is a local uniqueness of spatially periodic solutions for nonlinear elliptic equations with a singular perturbation.

\keywords{Elliptic equations \and singular perturbation \and Lie groups \and Small divisors }
\end{abstract}

\section{Introduction and Main Results}
\label{intro}
The problem of solving nonlinear
elliptic equations with a singular perturbation inspired by the work of Rabinowitz\cite{Rabi2}.
He studied the solvability of the following equation with singular perturbation
\begin{eqnarray*}
-\sum_{i,j=1}^{n}(a_{i,j}(x)u_{x_j})_{x_i}+u=\varepsilon
f(x,u,Du,D^2u,D^3u),
\end{eqnarray*}
where $x=(x_1,x_2,\dots,x_n)\in\textbf{R}^n$, coefficients $a_{i,j}$
are periodic in $x_1,x_2,\dots,x_n$, $\varepsilon\in\textbf{R}$, the
function $f$ is also periodic in $x_1,x_2,\dots,x_n$. By employing the Nash-Moser iteration process, he
proved that above elliptic singular perturbation problem has a
uniqueness spatial periodic solution. Han, Hong and Lin\cite{Han}
partially extended the work of Rabinowitz\cite{Rabi2}, they
considered the following singular perturbation problem
\begin{eqnarray*}
-\triangle u+u+\varepsilon a(D^pu)=f(x),~~x\in\textbf{R}^2,
\end{eqnarray*}
where $p\geq 4$, the function $a(x)$ is smooth and $f(x)$ is
$(2\pi)^2$-periodic. Under some assumptions on $a(x)$ and $f(x)$, they
employed the Nash-Moser iteration process to prove that above
singular problem had spatial periodic solutions. For more related work, we refer to \cite{K,ph}.

In this paper, we consider the following nonlinear elliptic equations with singular perturbation
\begin{eqnarray}\label{E1-1}
-\triangle u+u+\varepsilon a(D^{2\varrho}u)=f(x,u),
\end{eqnarray}
where $\varrho\in\textbf{N}$, $x\in\mathcal{M}$, $\mathcal{M}$ is a compact Lie group or, more generally, a compact homogeneous space. The main difficulty
is the presence of arbitrarily ''small divisors'' in the series expansion of the solutions.
 The operator $\triangle$ is the Laplace-Beltrami operator defined with respect to a Riemannian metric compatible with the group structure. The nonlinearity is finitely differentiable and vanishes at
$\textbf{u}=0$ at least $2$. Classical examples of compact connected Lie groups are the standard torus $\textbf{T}^n$, the special orthogonal group $SO(n)$ and the special unitary group $SU(n)$. Examples of compact homogeneous space are the spheres $\textbf{S}^n$, the real and complex Grassmanians,
and the moving frames, namely, the manifold of the $k$-ples of orthonormal vectors in $\textbf{R}^n$ with the natural action of the orthogonal group $O(n)$. For more examples, see\cite{Br,He2}.

The information on the spectral analysis of the Laplace-Beltrami
operator can be provided by the presence of continuous symmetries expressed via a Lie
group action. When the action is transitive (up to isomorphism),
\begin{eqnarray*}
\mathcal{M}=(G\times\textbf{T}^n)/N,
\end{eqnarray*}
where $G$ is a simply connected compact Lie group, $\textbf{T}^n$ is a torus, and $N$ is a closed
subgroup of $G\times\textbf{T}^n$. The functions on $\mathcal{M}$ can be seen as functions defined
on $G\times\textbf{T}^n$ and invariant under the action of $N$, namely
\begin{eqnarray}\label{E1-7}
\textbf{L}^2(\mathcal{M})&=&\textbf{L}^2((G\times\textbf{T}^n)/N)\nonumber\\
&=&\{u\in\textbf{L}^2(G\times\textbf{T}^n)|u(xg)=u(x),~\forall x\in G\times\textbf{T}^n,~g\in N\}.~~~
\end{eqnarray}
Thus, the Laplace-Beltrami operator on $\mathcal{M}$ can be identified with the Laplace-
Beltrami operator on $G\times\textbf{T}^n$, acting on the functions invariant under $N$.

The eigenvalues and the eigenfunctions of the Laplacian on a
simply connected compact group $G$ are, respectively,
\begin{eqnarray*}
-|j_1+\rho|^2+|\rho|^2,~~and~~\textbf{e}_{j_1,\sigma}(x_1),~~x_1\in G,~~j_1\in\Lambda^+(G),~\sigma=1,\cdots,d_{j_1},
\end{eqnarray*}
where $\Lambda^+(G)$ is the cone generated by the natural combinations of the fundamental weights $w_i\in\textbf{R}^r$, $i=1,\cdots,r$. $r$ denotes the rank of the group, and $\rho:=\sum_{i=1}^rw_i$. The degeneracy of the eigenvalues is $d_{j_1}\leq|j_1+\rho|^{\dim(G)-r}$. Furthermore, there exits a constant $D:=D(G)\in\textbf{N}$ such that $-|j_1+\rho|^2+|\rho|^2\in\textbf{Z}D^{-1}$, $\forall j_1\in\Lambda^+(G)$. Using the Fubini theorem, $\textbf{L}^2(G\times\textbf{T}^n)=\textbf{L}^2(G)\times\textbf{L}^2(\textbf{T}^n)$. By (\ref{E1-7}), we conclude that the eigenvalues and the eigenfunctions of $-\triangle+1$ on $\mathcal{M}$ are, respectively
\begin{eqnarray*}
\omega_j^2:=|j_1+\rho|^2-|\rho|^2+|j_2|^2+1,~~\textbf{e}_{j,\sigma}(x)=\textbf{e}_{j_1,\sigma}(x_1)e^{ij_2\cdot x_2},~~x=(x_1,.x_2)\in G,
\end{eqnarray*}
where the index $j=(j_1,j_2)$ is restricted to a subset $\Lambda_{\mathcal{M}}\subset\Lambda^+(G)\times\textbf{Z}^n$, $j_1\in\Lambda^+(G)\times\textbf{T}^n$, $\sigma\subset[1,d_{j}],~d_j:=\dim(\mathcal{M}_j),~\mathcal{M}_j\subset\mathcal{M},~d_j\leq d_{j_1}$. This property is crucial to Lemma 12 in section 3.

Rescaling in (\ref{E1-1}) amplitude $u(x)\mapsto\delta u(x)$, $\delta>0$, we solve the following problem
\begin{eqnarray}\label{E1-2}
-\triangle u+u+\varepsilon a(D^{2\varrho}u)=\varepsilon f(\delta,u),
\end{eqnarray}
where $a(s):=as^{p}$, $f(\delta,u):=b(x)s^p+O(\delta)$, $1\leq p\leq k$ and $\varepsilon=\delta^{p-1}$.

In our paper, we will divide into two cases to discuss the existence of solutions for (\ref{E1-2}). The first case is
$a(x)=ax$, where $a\neq0$ is a constant, then the ``small divisor'' phenomenon appears. The second case is
$a(\cdot)\in\textbf{C}^{k}(\textbf{R})$. The second case is simpler than the first case, and we can use
the Nash-Moser iteration scheme constructed in the first case to solve it. In what follows, we deal with the first case, i.e. $a((-1)^{\varrho}\triangle^{\varrho}u)=(-1)^{\varrho}a\triangle^{\varrho}u$. Thus we can rewrite (\ref{E1-2}) as
\begin{eqnarray}\label{E1-2R}
-\triangle u+u+(-1)^{\varrho}\varepsilon a\triangle^{\varrho}u=\varepsilon f(\delta,u).
\end{eqnarray}
Assume that $a$ is an irrational number and diophantine, i.e. there are constants $\gamma_0>0$, $\tau_0>1$, such that
\begin{eqnarray}\label{E1-4}
|m-a n|\geq\frac{\gamma_0}{|n|^{\tau_0}},~~\forall(m,n)\in\textbf{Z}^2\backslash \{(0,0)\}.
\end{eqnarray}
Then using Lemma 2 (see section 2) there exist $\gamma>0$ and $\tau>0$ such that the first order Melnikov nonreonance condition
\begin{eqnarray}\label{E1-5}
|\omega^2_j+1-\varepsilon a\omega_j^{2\varrho}|\geq\frac{\gamma}{|j+\overrightarrow{\rho}|^{\tau}},
\end{eqnarray}
where $\omega_j^2;=|j_1+\rho|^2-|\rho|^2+|j_2|^2$, $j=(j_1,j_2)\in\Lambda_{\mathcal{M}}\subset\Lambda^+(G)\times\textbf{Z}^n$ and $\overrightarrow{\rho}=(\rho,0)$.

In this paper, we make more general assumptions on nonlinear terms $f$, which include the standard tame estimates and Taylor tame estimates.
We assume that the nonlinear terms $f\in\textbf{C}^k(\mathcal{M}\times\textbf{R},\textbf{R})$, $f(0,0)=0$, $\partial_u f(x,0)=\cdots=(\partial_u^{p-1})f(x,0)=0$, $\partial_u^{p}f(x,0)\neq0$, $1\leq p\leq k$, $k\geq2$ and
\begin{eqnarray}
\label{E1-6}
\|\partial_uf(x,u')u\|_s\leq c(s)(\|u\|_s^{p-1}+\|u'\|_s\|u\|^{p-1}_{s_0}),
\end{eqnarray}
\begin{eqnarray}
\label{E1-8}
\|f(x,u+u')&-&f(x,u')-D_uf(x,u')u\|_s\nonumber\\
&\leq& c(s)(\|u'\|_{s}\|u\|_{s_0}^{p-1}+\|u\|_{s_0}\|u\|_{s}^{p-1}),~~~~
\end{eqnarray}
where $s>s_0>0$, $p>1$, $\forall u,u'\in\textbf{H}_s$ such that $\|u\|_{s_0}\leq1$ and $\|u'\|_{s_0}\leq1$.
In particular, for $s_0=s$,
\begin{eqnarray*}
\|f(x,u+u')-f(x,u')-D_uf(x,u')u\|_s\leq c(s)\|u\|_{s}^{p}.
\end{eqnarray*}
In fact, when $p=2$, assumption (\ref{E1-6}) and (\ref{E1-8}) are natural for $f\in\textbf{C}^k(\mathcal{M}\times\textbf{R},\textbf{R})$, which are tame estimates and Taylor tame estimates, respectively.

Our main results are based on the Nash-Moser iterative scheme, which is firstly introduced by Nash\cite{Nash} and Moser\cite{Moser}; see\cite{H} for more details.
Recently, Berti and Procesi\cite{Berti1} developed suitable linear and nonlinear harmonic analysis on compact Lie groups and homogeneous spaces, and via the technique and the Nash-Moser implicit function theorem, they found a family of time-periodic solutions of nonlinear Schr\"{o}dinger equations and wave equations. Inspired by the work of \cite{Berti1,Bourgain1,Rabi2,Yan}, we will construct a new suitable Nash-Moser iteration scheme to study the elliptic-type singular perturbation problems (\ref{E1-1}) on compact Lie groups and homogeneous spaces. Meanwhile, Theorems 3-4 allow more general $\textbf{C}^k$ nonlinearities on a higher dimensional space than the work of \cite{Han,Rabi2}. Since the proof process of Theorem 3-4 is similar with Theorem 1-2, we omit them.
For a general case, we assume that the nonlinear terms satisfy (\ref{E1-6})-(\ref{E1-8}).

To carry out the Nash-Moser iteration scheme, we also need to introduce the Banach scale of the Sobolev spaces on a group. Let $\mathcal{M}=(G\times\textbf{T}^n)/N$ be a homogeneous space, where $G$ is a simply connected Lie group of dimension $d$ and rank $r$. By Theorem 5 in section 2 (Peter-Weyl theorem), we have the orthogonal decomposition
\begin{eqnarray*}
\textbf{L}^2:=\textbf{L}^2(\mathcal{M},\textbf{C})=\bigoplus_{j\in\Lambda_{\mathcal{M}}}\mathcal{N}_j.
\end{eqnarray*}
The Fourier series of $u\in\textbf{L}^2$ is defined by
\begin{eqnarray*}
u=\sum_{j\in\Lambda_{\mathcal{M}}}u_j,
\end{eqnarray*}
where $u_j:=\Pi_{\mathcal{N}_j}u$ and $\Pi_{\mathcal{N}_j}:\textbf{L}^2\longrightarrow\mathcal{N}_j$ are the spectral projectors, $\Lambda_{\mathcal{M}}:=\{j\in\Lambda^+\times\textbf{Z}^n~~such~~that~~\mathcal{N}_j\neq\{0\}\}$ is closed under sum.

More precisely, for $\forall 1\leq d_j'\leq d_j$, we have
\begin{eqnarray*}
u(x)=\sum_{j\in\Lambda_{\mathcal{M}}}\sum_{\sigma=1}^{d'_j}u_{j,\sigma}\textbf{e}_{j,\sigma}
\end{eqnarray*}
with the norm
\begin{eqnarray*}
\|u(x)\|^2_{\textbf{L}^2}=2\pi\sum_{j\in\Lambda_{\mathcal{M}}}\sum_{\sigma=1}^{d'_j}|u_{j,\sigma}|^2.
\end{eqnarray*}
We define the Sobolev scale of Hilbert spaces
\begin{eqnarray*}
\textbf{H}_s:=\textbf{H}_s(\mathcal{M},\textbf{C})=\{u=\sum_{j\in\Lambda_{\mathcal{M}}}u_j|\|u\|_s^2:=\sum_{j\in\Lambda_{\mathcal{M}}}e^{2|j+\overrightarrow{\rho}|s}\|u\|_{\textbf{L}^2}^2<+\infty\},
\end{eqnarray*}
where $\overrightarrow{\rho}=(\rho,0)\in\Lambda^+\times\textbf{Z}^n$. It is obvious that $\textbf{H}^0=\textbf{L}^2$.
Since $\mathcal{M}$ is a compact $C^{\infty}$-Riemannian manifold without boundary, for any $s\in\textbf{N}$, $\textbf{H}_s$ is equivalent to the usual Sobolev sapce
\begin{eqnarray*}
H_s=\{u\in\textbf{L}^2|D^{\alpha}u\in\textbf{L}^2,\forall|\alpha|\leq s, \|u\|_s^2:=\sum_{|\alpha|\leq s}\|D^{\alpha}u\|_{\textbf{L}^2}^2\}.
\end{eqnarray*}

For the case $a(x)=ax$ in (\ref{E1-1}), we have the following result.
\begin{theorem}
Let $\tau,\kappa_0,\delta>0$ and $0<\sigma_{0}(\mathcal{M})<\bar{\sigma}(\mathcal{M})<\sigma(\mathcal{M})<k(\mathcal{M})-1$. Assume that $a>0$ is diophantine. For $\delta_0>0$, $s_0:=\bar{\sigma}(\mathcal{M})$, $k:=k(\mathcal{M})\in\textbf{N}$ and $f\in\textbf{C}^k$ satisfying (\ref{E1-6})-(\ref{E1-8}), Then there exists a positive measure Cantor set $\mathcal{C}\subset[0,\delta_0]$ such that, $\forall a\in\mathcal{C}$, $u_{\delta}(x,\varepsilon)$ is a local uniqueness solution of (\ref{E1-2R}).
Furthermore, there exists a curve
\begin{eqnarray*}
u\in\textbf{C}^1([0,\delta_0];\textbf{H}_{s_0})~~with~\|u(\delta)\|_{s_0}=O(\delta).
\end{eqnarray*}
 \end{theorem}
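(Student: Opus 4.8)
The plan is to implement a Nash--Moser iteration scheme adapted to the spectral decomposition of $-\triangle+1$ on $\mathcal{M}=(G\times\textbf{T}^n)/N$. First I would reformulate \eqref{E1-2R} as a functional equation $\mathcal{F}(\delta,u):=\mathcal{L}_\varepsilon u-\varepsilon f(\delta,u)=0$ on the Sobolev scale $\textbf{H}_s$, where $\mathcal{L}_\varepsilon:=-\triangle+1+(-1)^{\varrho}\varepsilon a\triangle^{\varrho}$ is diagonal on the blocks $\mathcal{N}_j$ with eigenvalues $\omega_j^2+1-\varepsilon a\omega_j^{2\varrho}$. Using \eqref{E1-4} and Lemma 2 I would fix the Melnikov nonresonance condition \eqref{E1-5} on a set of parameters of positive measure; this guarantees that, on a suitably truncated space $\textbf{H}_s^{(N_n)}$ (frequencies $|j+\overrightarrow{\rho}|\le N_n$ with $N_n=N_0^{\chi^n}$, $\chi\in(1,2)$), the linearized operator $\mathcal{L}_\varepsilon-\varepsilon D_u f(\delta,u_n)$ is invertible with the inverse losing only a fixed power $\tau$ of derivatives — i.e. $\|\mathcal{L}_\varepsilon^{-1}h\|_s\lesssim \gamma^{-1}\|h\|_{s+\tau}$. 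Then I would run the standard quadratic Newton scheme $u_{n+1}=u_n+h_n$, $h_n=-(D_u\mathcal{F}(\delta,u_n)|_{\textbf{H}^{(N_n)}})^{-1}\Pi_{N_n}\mathcal{F}(\delta,u_n)$, using the tame estimate \eqref{E1-6} for the linearized term and the Taylor tame estimate \eqref{E1-8} to control the quadratic remainder $Q_n:=\mathcal{F}(\delta,u_{n+1})-\mathcal{F}(\delta,u_n)-D_u\mathcal{F}(\delta,u_n)h_n$.

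The convergence argument is the usual one: by induction one propagates bounds $\|u_n\|_{s_0}\le \delta$ (with constants), $\|u_n\|_{s_1}\le N_{n-1}^{\mu}$ for a high norm $s_1$, and $\|\mathcal{F}(\delta,u_n)\|_{s_0}\le N_n^{-\alpha}$ for suitable exponents, choosing $\chi,\mu,\alpha$ and the smoothing parameters so that the loss of $\tau$ derivatives from \eqref{E1-5}, the interpolation inequalities on $\textbf{H}_s$, and the superlinear gain $p>1$ in \eqref{E1-8} close the recursion. The interpolation (convexity) inequalities and the smoothing properties of the projectors $\Pi_{N}$ on the scale $\textbf{H}_s$ — which hold here because the weights $e^{2|j+\overrightarrow{\rho}|s}$ are monotone in $s$ and the degeneracies satisfy $d_j\le|j+\overrightarrow{\rho}|^{d-r}$ — are what make the tame estimates usable. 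Passing to the limit gives $u_\delta=\lim u_n\in\textbf{H}_{s_0}$ solving \eqref{E1-2R} for every $a$ in the Cantor set $\mathcal{C}$ obtained by intersecting, over all $n$, the parameter sets on which \eqref{E1-5} holds at scale $N_n$; a Borel--Cantelli / measure-estimate argument (excised measure at stage $n$ is $O(N_n^{-1})$ times a constant, summable) shows $\mathrm{meas}(\mathcal{C})>0$, in fact $\mathrm{meas}([0,\delta_0]\setminus\mathcal{C})\to0$ as $\gamma\to0$.

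For the regularity-in-$\delta$ and local uniqueness claims I would note that the scheme can be run with $\delta$ as a parameter: since $f(\delta,u)$ is $\textbf{C}^1$ in $\delta$ and $\mathcal{F}(\delta,0)=O(\delta)$ (because $f$ vanishes to order $p\ge2$ and we have rescaled, so $\varepsilon=\delta^{p-1}$ and the first Newton step already gives $\|u_1\|_{s_0}=O(\delta)$), the iterates $u_n(\delta)$ are $\textbf{C}^1$ in $\delta$ with derivatives bounded uniformly in $n$ after one more interpolation, hence $u\in\textbf{C}^1([0,\delta_0];\textbf{H}_{s_0})$ with $\|u(\delta)\|_{s_0}=O(\delta)$. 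Local uniqueness follows from the invertibility of the linearized operator at the solution together with the contraction estimate \eqref{E1-8}: two solutions with $\textbf{H}_{s_0}$-norm $O(\delta)$ must coincide because their difference $w$ satisfies $\mathcal{L}_\varepsilon w=\varepsilon(f(\delta,u)-f(\delta,u'))$ and the right-hand side is $O(\|w\|_{s_0}\cdot\delta^{p-1})$ in $\textbf{H}_{s_0-\tau}$, so for $\delta$ small $\|w\|_{s_0}\le \tfrac12\|w\|_{s_0}$ once the loss of derivatives is absorbed using that both solutions live in a high-norm ball.

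I expect the main obstacle to be the inversion of the linearized operator with a uniform ``small divisor'' bound: although $\mathcal{L}_\varepsilon$ itself is diagonal, the perturbed operator $\mathcal{L}_\varepsilon-\varepsilon D_u f(\delta,u_n)$ is not, and one must show that the off-diagonal part does not destroy \eqref{E1-5}. Because the eigenvalue clusters are genuinely high-dimensional — the degeneracy $d_j$ can be as large as $|j+\overrightarrow{\rho}|^{d-r}$ — one needs the separation property of $\Lambda_{\mathcal{M}}$ (the remark after \eqref{E1-7}, ``crucial to Lemma 12'') and a multiscale/Neumann-series argument on dyadic blocks, controlling the number and size of resonant blocks; this is precisely where the Berti--Procesi harmonic analysis on compact Lie groups enters, and adapting it to the elliptic operator with the $\triangle^{\varrho}$ perturbation (so that the divisor is $\omega_j^2+1-\varepsilon a\omega_j^{2\varrho}$ rather than the wave/Schrödinger one) is the technical heart of the argument.
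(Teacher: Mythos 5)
Your overall plan — truncated Nash--Moser iteration on the scale $\textbf{H}_s$, inversion of the truncated linearized operator via the first Melnikov condition \eqref{E1-5} combined with the Berti--Procesi separation of singular clusters, a measure estimate producing the Cantor set, and uniqueness plus $\textbf{C}^1$-dependence in $\delta$ by a contraction argument — is the same route the paper takes (Lemmas 12--17), up to cosmetic differences (you use superexponential truncations $N_n=N_0^{\chi^n}$ with a low/high-norm induction, the paper uses $N_i=N_0^i$ together with a decreasing family of norm indices $\sigma_i$, trading the $N^{\tau+\kappa_0}$ loss of Lemma 12 for a shift $\sigma\mapsto\sigma+\tau+\kappa_0$ and getting residuals that decay with exponent $p$).

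There is, however, one genuine gap: you build the Cantor set by intersecting, over the scales $N_n$, the sets where the \emph{diagonal} nonresonance condition \eqref{E1-5} holds, and you assert that the multiscale/cluster argument then absorbs the off-diagonal part $-\varepsilon D_uf(\delta,u_n)$. That is not enough. Once $|D_j|<\varsigma\sim\varepsilon$ on the singular sites, the eigenvalues of the singular cluster blocks of $D+\varepsilon T$ can be shifted by the perturbation to be arbitrarily small, and no lower bound on them follows from \eqref{E1-5}; the separation of clusters (Lemma 18) only controls the coupling between clusters, not the invertibility of each block. This is why Lemma 12 carries the extra hypothesis \eqref{E8-1}, a quantitative $\textbf{L}^2$-bound $\|(L_a^{(r)}(\delta,q))^{-1}\|_0\leq 4r^{\kappa}/\gamma_1$ on \emph{all} truncated linearized operators evaluated at the current approximate solution $q$, and why the block bound $\|\mathcal{L}_{\alpha}^{-1}\|_0\leq C\gamma_1^{-1}M_{\alpha}^{\kappa}$ of Lemma 21 is deduced from it. Consequently the excised set depends on the iterates themselves, and the measure estimate cannot be a simple Borel--Cantelli over \eqref{E1-5}: in Lemma 17 one must show that the eigenvalues $\lambda_k(\delta,u)$ of $L_a^{(r)}(\delta,u)$ move monotonically in $\delta$ (the computation $\partial_{\delta}\lambda_k\leq-2(\rho-1)\delta_1^{2\rho-2}$), and that the excised sets are stable when the approximate solution is updated, which is exactly the role of the comparison $\|u_2-u_1\|_{\bar\sigma}\leq N^{-e}$ in \eqref{E8-4}. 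Without imposing \eqref{E8-1} along the iteration and proving this solution-dependent measure estimate, the inversion step of your scheme (and hence the construction of $\mathcal{C}$) does not close.
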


For the second case, we consider equation (\ref{E1-2}) and assume that $a\in\textbf{C}^k(\textbf{R})$,
$a(0)=0$, and
\begin{eqnarray}
\label{E1-6R}
&&\|\partial_ua(u')u\|_s\leq c(s)(\|u\|_s^{p-1}+\|u'\|_s\|u\|^{p-1}_{s_0}),\\
\label{E1-8R}
&&\|a(u+u')-a(u')-D_ua(u')u\|_s\leq c(s)(\|u'\|_{s}\|u\|_{s_0}^{p-1}+\|u\|_{s_0}\|u\|_{s}^{p-1}),~~~~~~~
\end{eqnarray}
where $s>s_0>0$, $1<p\leq k$, $\forall u,u'\in\textbf{H}_s$ such that $\|u\|_{s_0}\leq1$ and $\|u'\|_{s_0}\leq1$.
In particular, for $s_0=s$,
\begin{eqnarray*}
\|a(u+u')-a(u)-D_ua(u)u\|_s\leq c(s)\|u\|_{s}^{p}.
\end{eqnarray*}

Then we have

\begin{theorem}
Let $\tau,\kappa_0>0$ and $0<\sigma_{0}(\mathcal{M})<\bar{\sigma}(\mathcal{M})<\sigma(\mathcal{M})<k(\mathcal{M})-1$. There exist $s_0:=\bar{\sigma}(\mathcal{M})$ and $k:=k(\mathcal{M})\in\textbf{N}$ such that for $f,a\in\textbf{C}^k$ satisfying (\ref{E1-6})-(\ref{E1-8R}), equation (\ref{E1-2}) has a solution $u(x)\in\textbf{H}_{s_0}$.
\end{theorem}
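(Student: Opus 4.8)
The plan is to reduce the second, non-resonant case to the machinery already developed for Theorem 1. The key structural observation is that in equation (\ref{E1-2}) with $a\in\textbf{C}^k(\textbf{R})$ and no linear resonant part, the linearized operator at $u=0$ is simply $-\triangle+1$, whose eigenvalues $\omega_j^2+1=|j_1+\rho|^2-|\rho|^2+|j_2|^2+1\geq 1$ are uniformly bounded away from zero. Thus there is no ``small divisor'' obstruction at all: the first-order Melnikov condition analogous to (\ref{E1-5}) holds trivially with $\gamma=1$, $\tau=0$, and one does not need to excise any parameters. Consequently the Cantor set $\mathcal{C}$ collapses to the full interval, and the scheme runs for every $\varepsilon$ small (equivalently every $\delta$ small, since $\varepsilon=\delta^{p-1}$).

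First I would set up the Nash-Moser iteration exactly as in the proof of Theorem 1, working in the Sobolev scale $\textbf{H}_s$ on $\mathcal{M}=(G\times\textbf{T}^n)/N$ with the smoothing operators $\Pi_N$ furnished by the Peter-Weyl decomposition (Theorem 5). One writes $F(\varepsilon,\delta,u):=-\triangle u+u+\varepsilon a(D^{2\varrho}u)-\varepsilon f(\delta,u)$ and seeks a zero near $u=0$. The crucial linear step — invertibility of the linearized operator $L(u)=-\triangle+1+\varepsilon D_u a(D^{2\varrho}u)[\cdot]-\varepsilon D_uf(\delta,u)$ on the truncated spaces $\textbf{H}_s\cap\mathrm{Range}(\Pi_N)$ with tame estimates on the inverse — is here \emph{much} easier than in Theorem 1: since $\|(-\triangle+1)^{-1}\|$ is bounded on every $\textbf{H}_s$ and the perturbation has size $O(\varepsilon N^{2\varrho})$ in operatorial norm by (\ref{E1-6R}), a Neumann series gives $\|L(u)^{-1}h\|_s\leq C(\|h\|_s+\|u\|_{s+\nu}\|h\|_{s_0})$ for all $N$ with $\varepsilon N^{2\varrho}$ small, a condition that is compatible with the iteration because the losses are reabsorbed by the super-exponential convergence $N_{n+1}=N_n^{\chi}$. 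The quadratic (Taylor tame) remainder is controlled by (\ref{E1-8}) and (\ref{E1-8R}).

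Then I would run the standard iteration: define $u_0=0$, $u_{n+1}=u_n+h_n$ with $h_n=-\Pi_{N_n}L(u_n)^{-1}\Pi_{N_n}F(\varepsilon,\delta,u_n)$, and prove by induction the usual estimates $\|u_n\|_{s_0}\leq \delta$, $\|u_n-u_{n-1}\|_{s_1}\leq \delta N_{n-1}^{-\alpha}$ for a high norm $s_1<k$ and a lower norm $s_0=\bar\sigma(\mathcal{M})$, using interpolation ($\textbf{H}_s$ interpolates) to trade regularity for smallness. The conditions $0<\sigma_0(\mathcal{M})<\bar\sigma(\mathcal{M})<\sigma(\mathcal{M})<k(\mathcal{M})-1$ are precisely the budget of derivatives needed to close these inequalities. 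The sequence converges in $\textbf{H}_{s_0}$ to a solution $u(x)$ of (\ref{E1-2}).

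The main obstacle — or rather the main point requiring care — is verifying that the tame estimates (\ref{E1-6})-(\ref{E1-8R}) compose correctly when $a$ and $f$ are evaluated at the high-order derivative argument $D^{2\varrho}u$: one needs that $u\mapsto a(D^{2\varrho}u)$ is tame with a fixed derivative loss $2\varrho$, which follows from (\ref{E1-6R})-(\ref{E1-8R}) together with the algebra and composition properties of $\textbf{H}_s$ on the compact manifold $\mathcal{M}$ (standard, since $\textbf{H}_s\hookrightarrow\textbf{C}^0$ for $s>s_0$). Since no small-divisor excision is needed, the measure-theoretic part of Theorem 1 is entirely absent here, which is why — as the authors remark — this case is strictly simpler and the same iteration scheme applies verbatim.
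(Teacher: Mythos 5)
Your overall strategy---rerun the Nash--Moser scheme of Theorem 1, treating $\varepsilon a(D^{2\varrho}u)$ as part of the nonlinearity---is exactly the route the paper indicates (its proof of Theorem 2 is omitted with the remark that it is ``similar to the proof of Theorem 1''). The gap is in the step where you discard the paper's key Lemma 12. You claim the linearized operators can be inverted by a Neumann series around $-\triangle+1$ because the perturbation $\varepsilon D_ua(D^{2\varrho}u)[h]\sim\varepsilon a'(D^{2\varrho}u)D^{2\varrho}h$ has operator norm $O(\varepsilon N^{2\varrho})$ on the range of $\Pi^{(N)}$, and that the condition ``$\varepsilon N^{2\varrho}$ small'' is compatible with the iteration. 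It is not: in the scheme $N_i=N_0^i\rightarrow\infty$ while $\varepsilon$ is fixed, so the smallness condition (even the sharper $\varepsilon N^{2\varrho-2}$ small obtained by comparing with the symbol $\omega_j^2+1$) fails after finitely many steps, and what then fails is not a quantitative loss that super-exponential convergence can reabsorb, but invertibility itself. For $\varrho\geq2$ the term $\varepsilon a'(\cdot)D^{2\varrho}$ is a singular perturbation dominating $-\triangle+1$ at frequencies $|j+\overrightarrow{\rho}|$ of order $(\varepsilon\|a'(D^{2\varrho}u)\|_{\textbf{L}^{\infty}})^{-1/(2\varrho-2)}$; since no sign of $a'$ is assumed, the quantity $\omega_j^2+1-\varepsilon a'\,\omega_j^{2\varrho}$ can degenerate there, so the eigenvalues of $L^{(N)}$ are not bounded below by a fixed constant, and your claims that ``there is no small divisor obstruction at all'' and that (\ref{E1-5}) holds trivially with $\gamma=1$, $\tau=0$ are false in general. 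The absence of resonances at $u=0$ does not persist along the iteration precisely because the linearization carries $2\varrho$ derivatives at ever higher frequencies; this is the cluster-of-small-divisors phenomenon that Lemma 12 (regular/singular site splitting, separation of clusters, inverse with loss $N^{\tau+\kappa_0}$ as in (\ref{E3-4})) is built to handle, and which the paper reuses for Theorem 2 instead of a plain Neumann series.

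As written, your argument is valid only for $\varrho=1$ (where the perturbation is relatively bounded with relative bound $O(\varepsilon)$ uniformly in $N$) or under an unstated sign condition on $a'$ making the order-$2\varrho$ term nonnegative. To close the gap for general $\varrho$ you would have to import the full linear analysis of Lemma 12 (which is what the paper's omitted proof does), and then explain how its nonresonance input is verified in this variable-coefficient setting, where $a'(D^{2\varrho}u)$ is a function rather than a Diophantine constant; a purely perturbative inversion around $-\triangle+1$ cannot substitute for it.
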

The proof of Theorem 2 is similar to the proof of Theorem 1, hence we omit it.

Especially, if $\mathcal{M}$ is the standard torus $\textbf{T}^n$ or the spheres $\textbf{S}^n$, we obtain the existence of spatially periodic solutions for elliptic equation (\ref{E1-1}). We also need to divide into two cases to discuss. For the first case, we have
\begin{theorem}
Let $\tau,\kappa_0>0$ and $0<\sigma_{0}<\bar{\sigma}<\sigma<k-1$.
Assume that $a>0$ is diophantine. For $\delta_0>0$, $s_0:=\bar{\sigma}(\mathcal{M})$, $k:=k(\mathcal{M})\in\textbf{N}$ and $f\in\textbf{C}^k$ satisfying (\ref{E1-6})-(\ref{E1-8}), Then there exists a positive measure Cantor set $\mathcal{C}\subset[0,\delta_0]$ such that, $\forall a\in\mathcal{C}$,
$u=u_{\delta}(x,\varepsilon)$ is a unique spatially periodic solution of (\ref{E1-1}).
Furthermore, there exists a curve
\begin{eqnarray*}
u\in\textbf{C}^1([0,\delta_0];\textbf{H}_{s_0})~~with~\|u(\delta)\|_{s_0}=O(\delta).
\end{eqnarray*}
\end{theorem}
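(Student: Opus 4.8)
The plan is to obtain Theorem~3 as the specialization of Theorem~1 to the two concrete geometries $\mathcal{M}=\textbf{T}^n$ and $\mathcal{M}=\textbf{S}^n$, both of which are instances of the homogeneous spaces $(G\times\textbf{T}^n)/N$ treated there: for $\textbf{T}^n$ one takes $G$ and $N$ trivial, while $\textbf{S}^n$ arises with $G$ the simply connected cover of $SO(n+1)$ and $N$ the stabilizer of a point. On these manifolds an element of $\textbf{H}_{s_0}$ is exactly a spatially periodic function on $\textbf{T}^n$, respectively a function on the sphere of the induced Sobolev regularity, so the ``local uniqueness solution'' produced abstractly by Theorem~1 is precisely the ``unique spatially periodic solution'' claimed here, and the curve $u\in\textbf{C}^1([0,\delta_0];\textbf{H}_{s_0})$ with $\|u(\delta)\|_{s_0}=O(\delta)$ is inherited verbatim. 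The rescaling $u\mapsto\delta u$ turns (\ref{E1-1}) into (\ref{E1-2R}) with $\varepsilon=\delta^{p-1}$ and leaves the tame and Taylor-tame bounds (\ref{E1-6})--(\ref{E1-8}) on the $\textbf{C}^k$ nonlinearity intact, so it suffices to run, in this concrete setting, the Nash--Moser scheme underlying Theorem~1.

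Concretely I would iterate as follows. Fix truncation parameters $N_n=N_0^{\chi^n}$ with $\chi\in(1,2)$ and let $\Pi_{N_n}$ project onto the modes $\textbf{e}_{j,\sigma}$ with $|j+\overrightarrow{\rho}|\le N_n$. Writing $F(\delta,u):=-\triangle u+u+(-1)^{\varrho}\varepsilon a\triangle^{\varrho}u-\varepsilon f(\delta,u)$ for the operator associated with (\ref{E1-2R}), start from $u_0=0$ and, given $u_n$, solve on the range of $\Pi_{N_{n+1}}$ the linearized equation $D_uF(\delta,u_n)\,h_n=-\Pi_{N_{n+1}}F(\delta,u_n)$, then set $u_{n+1}=u_n+h_n$. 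The diagonal part of $D_uF(\delta,u_n)$ has entries $\omega_j^2+1-\varepsilon a\omega_j^{2\varrho}$, with $\omega_j^2=|j|^2$ on $\textbf{T}^n$ and the analogous quantity built from $\ell(\ell+n-1)$ on $\textbf{S}^n$; the first-order Melnikov condition (\ref{E1-5}), furnished by the diophantine hypothesis (\ref{E1-4}) together with Lemma~2, bounds these entries below by $\gamma/|j+\overrightarrow{\rho}|^{\tau}$, so the diagonal block inverts with a polynomial loss of derivatives and the perturbation $\varepsilon D_uf(\delta,u_n)$ is absorbed on the finite-dimensional block by a Neumann series exploiting the smallness of $\varepsilon=\delta^{p-1}$. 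Inserting this into the tame bounds (\ref{E1-6}) and (\ref{E1-8}) yields, in the scale $\textbf{H}_s$, the usual Nash--Moser estimates: $h_n$ is controlled by $N_n^{\mu}$ times the residual $F(\delta,u_n)$, the residual decays quadratically, and hence $\sum_n\|u_{n+1}-u_n\|_{s_0}<\infty$ with superexponential convergence to a limit $u_\delta\in\textbf{H}_{s_0}$ solving (\ref{E1-2R}); undoing the rescaling gives $\|u(\delta)\|_{s_0}=O(\delta)$, and the smooth dependence of each iterate on $\delta$ upgrades this to a $\textbf{C}^1$ curve.

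The step I expect to be the main obstacle is the measure estimate ensuring that $\mathcal{C}$ has positive measure. For each mode $j$ one must discard the set of $\delta\in[0,\delta_0]$ with $|\omega_j^2+1-\varepsilon a\omega_j^{2\varrho}|<\gamma/|j+\overrightarrow{\rho}|^{\tau}$, $\varepsilon=\delta^{p-1}$; since $\varepsilon\mapsto\omega_j^2+1-\varepsilon a\omega_j^{2\varrho}$ is affine with slope of modulus $a\omega_j^{2\varrho}$, this bad set has measure at most $C\gamma\,\omega_j^{-2\varrho}|j+\overrightarrow{\rho}|^{-\tau}$ in $\varepsilon$, hence a comparable measure in $\delta$ after the change of variable $\delta\mapsto\delta^{p-1}$. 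On $\textbf{T}^n$ there are $O(R^{n-1})$ lattice points with $|j+\overrightarrow{\rho}|\sim R$ and on $\textbf{S}^n$ the eigenvalue multiplicities grow polynomially, so choosing $\tau$ larger than $n+2\varrho$ — compatible with the window $\sigma<k-1$ imposed on the Sobolev exponents once $k=k(\mathcal{M})$ is taken large enough — makes $\sum_j\operatorname{meas}(\mathrm{bad}_j)$ convergent and as small as we please by shrinking $\gamma$, so $\operatorname{meas}(\mathcal{C})\ge\delta_0-O(\gamma)>0$. It then remains to check the two points already present in Theorem~1: that Lemma~2 delivers $\gamma,\tau$ uniformly over the surviving parameters, and that local uniqueness follows from invertibility of the full linearized operator at $u_\delta$ via an implicit-function-theorem argument in $\textbf{H}_{s_0}$. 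Since $\textbf{T}^n$ and $\textbf{S}^n$ fall under Theorem~1, no essentially new analysis is needed; the content of Theorem~3 is just the identification of $\textbf{H}_{s_0}$-solutions with spatially periodic solutions in these two geometries.
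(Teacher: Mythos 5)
Your proposal is correct and follows essentially the same route as the paper, which itself gives no separate argument for Theorem~3 but reduces it to Theorem~1 by viewing $\textbf{T}^n$ and $\textbf{S}^n$ as instances of the homogeneous spaces $(G\times\textbf{T}^n)/N$ and identifying $\textbf{H}_{s_0}$-solutions with spatially periodic ones. The only caveat is that your parenthetical sketch of inverting the linearized operator by a single Neumann series around the diagonal understates the clustering of small divisors that forces the paper's regular/singular site decomposition and separation lemma, but since you explicitly defer to the machinery of Theorem~1 (where this is handled), the reduction stands.
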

For the second case, we have
\begin{theorem}
Let $\tau,\kappa_0>0$ and $0<\sigma_{0}<\bar{\sigma}<\sigma<k-1$.  There exist $s_0:=\bar{\sigma}$ and $k\in\textbf{N}$ such that $\forall f,a\in\textbf{C}^k$ satisfying (\ref{E1-6})-(\ref{E1-8R}). Then equation (\ref{E1-1}) has a spatially periodic solution $u(x)\in\textbf{H}_{s_0}$.
\end{theorem}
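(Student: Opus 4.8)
The plan is to run the Nash–Moser scheme of Theorem~1 in the particular cases $\mathcal{M}=\textbf{T}^n$ and $\mathcal{M}=\textbf{S}^n$, so that Theorem~4 becomes the specialization of Theorem~2; first I would record why these two manifolds fall under the hypotheses. The torus $\textbf{T}^n$ is itself a compact connected (abelian) Lie group: here $G$ is trivial, $\Lambda_{\mathcal{M}}=\textbf{Z}^n$, the eigenfunctions of $-\triangle+1$ are $\textbf{e}_j(x)=e^{ij\cdot x}$ with eigenvalues $\omega_j^2+1=|j|^2+1$, and ``spatially periodic'' is the usual $2\pi$–periodicity. The sphere $\textbf{S}^n=SO(n+1)/SO(n)$ is one of the compact homogeneous spaces listed in the introduction; by the Peter–Weyl theorem (Theorem~5) one has $\textbf{L}^2(\textbf{S}^n)=\bigoplus_{\ell}\mathcal{N}_\ell$, where $\mathcal{N}_\ell$ is the space of degree–$\ell$ spherical harmonics, $\dim\mathcal{N}_\ell$ is polynomially bounded, and the eigenvalue of $-\triangle+1$ on $\mathcal{N}_\ell$ is $\ell(\ell+n-1)+1$. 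In both cases every eigenvalue is $\geq1$, so the unperturbed operator $-\triangle+1$ is already boundedly invertible on the whole analytic–Sobolev scale $\textbf{H}_s$; the parameters $\sigma_0(\mathcal{M}),\bar\sigma(\mathcal{M}),\sigma(\mathcal{M}),k(\mathcal{M})$ of Theorem~2 are then the explicit numbers $\sigma_0,\bar\sigma,\sigma,k$ of the statement, subject to $0<\sigma_0<\bar\sigma<\sigma<k-1$.

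Next I would set up the iteration on the rescaled equation. After the amplitude rescaling $u\mapsto\delta u$ of the introduction, which with $\varepsilon=\delta^{p-1}$ small turns (\ref{E1-1}) into (\ref{E1-2}), it suffices to find a zero in $\textbf{H}_{s_0}$, $s_0=\bar\sigma$, of
\begin{eqnarray*}
F(u):=-\triangle u+u+\varepsilon\,a(D^{2\varrho}u)-\varepsilon f(\delta,u).
\end{eqnarray*}
Let $\Pi_N$ be the spectral projector onto $\{\,j:\ |j+\overrightarrow{\rho}|\leq N\,\}$, $N_n:=N_0^{\chi^{n}}$ with $\chi\in(1,2)$, and $\textbf{H}^{(n)}:=\Pi_{N_n}\textbf{H}_{s_0}$. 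Starting from $u_0=0$ I run the modified Newton iteration $u_{n+1}=u_n+h_n$ with $h_n:=-\Pi_{N_n}L_n^{-1}\Pi_{N_n}F(u_n)$, where $L_n:=D_uF(u_n)=(-\triangle+1)+\varepsilon D_u\big(a(D^{2\varrho}\cdot)\big)(u_n)-\varepsilon D_uf(\delta,u_n)$, keeping the standard smoothing estimates $\|\Pi_N u\|_{s'}\leq N^{s'-s}\|u\|_s$ and $\|(I-\Pi_N)u\|_{s}\leq N^{s-s'}\|u\|_{s'}$ for $s'\geq s$.

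The heart of the argument is the invertibility of $L_n$, and this is exactly where the present (``second'') case is easier than the first. Because $a$ enters nonlinearly with $a(0)=0$, the tame estimate (\ref{E1-6R}) gives $\|\varepsilon D_u\big(a(D^{2\varrho}\cdot)\big)(u_n)h\|_s\leq C(s)\varepsilon\big(\|h\|_s\|u_n\|_{s_0}^{p-1}+\|u_n\|_s\|h\|_{s_0}^{p-1}\big)$, and on the scale $\textbf{H}_s$ the operator $D^{2\varrho}$ costs only a controlled loss of regularity (an arbitrarily small slice of the analyticity width in the exponential scale of the paper), so this correction — together with $\varepsilon D_uf(\delta,u_n)$, controlled by (\ref{E1-6}) — has operator norm $\leq C\varepsilon=C\delta^{p-1}\ll1$ as long as $\|u_n\|_{s_0}\leq1$. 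A Neumann series around $(-\triangle+1)^{-1}$, which is bounded on every $\textbf{H}_s$ since the eigenvalues are $\geq1$, then yields $\|L_n^{-1}h\|_s\leq 2\|h\|_s$ for all $n$ and all small $\delta$. In the first case this is precisely the step that forced the diophantine hypothesis on $a$, the first–order Melnikov condition (\ref{E1-5}) and the excision of a Cantor set of parameters; here the correction is a genuine bounded (after a harmless loss of analyticity radius) perturbation of $-\triangle+1$, so no parameter restriction is needed and the conclusion holds for all small $\varepsilon$ — hence no Cantor set appears in Theorem~4.

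Finally I would close the loop with the standard quadratic scheme. Writing $F(u_{n+1})=(I-\Pi_{N_n})F(u_n)+Q_n$, where $Q_n:=F(u_n+h_n)-F(u_n)-L_nh_n$ is the quadratic remainder estimated by the Taylor tame inequalities (\ref{E1-8}) and (\ref{E1-8R}), and combining the inverse bound with the smoothing estimates and the admissible choice $0<\sigma_0<\bar\sigma<\sigma<k-1$, a routine induction shows $\|h_n\|_{s_0}\leq\delta N_n^{-\alpha}$ and $\|u_n\|_{\bar\sigma}\leq1$ for a suitable $\alpha>0$; thus $u_n\to u_\infty$ in $\textbf{H}_{s_0}$ with $F(u_\infty)=0$, and undoing the rescaling produces a solution $u\in\textbf{H}_{s_0}$ of (\ref{E1-1}) which is $2\pi$–periodic on $\textbf{T}^n$ and a genuine solution on $\textbf{S}^n$. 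The only real obstacle is the bookkeeping: one must track the loss of analyticity width produced at each step by $D^{2\varrho}$ and by the tame constants $c(s)$, and verify that the super–exponential growth $N_{n+1}=N_n^{\chi}$ reabsorbs it — this is where the inequalities between $\sigma_0,\bar\sigma,\sigma,k$ and the choice of $\chi$ are used. The degeneracy $d_j\leq d_{j_1}$ of the spectrum on $\textbf{S}^n$ plays no role, since each eigenspace $\mathcal{N}_j$ carries a single eigenvalue bounded below by $1$, so no clusters of small divisors can form.
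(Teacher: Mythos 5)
Your overall strategy --- specialize the scale of spaces to $\textbf{T}^n$ and $\textbf{S}^n$, rescale the amplitude, and run the truncated Newton scheme of Section 3 with the smoothing properties (\ref{E3-1R2}) and the Taylor-tame bounds (\ref{E1-8}), (\ref{E1-8R}) controlling the quadratic remainder --- is the route the paper intends: the authors omit the proofs of Theorems 2 and 4 precisely with the remark that the second case is ``simpler'' and reuses the iteration built for Theorem 1. The genuine gap is in your central analytic claim, the uniform bound $\|L_n^{-1}h\|_s\leq 2\|h\|_s$ obtained ``by a Neumann series around $(-\triangle+1)^{-1}$''. The perturbation $\varepsilon\,\partial_u a(D^{2\varrho}u_n)D^{2\varrho}$ is a differential operator of order $2\varrho$, which for $\varrho\geq2$ exceeds the order of the principal part: on the truncated space $\textbf{H}^{(N_n)}$ the operator $(-\triangle+1)^{-1}\partial_u a(D^{2\varrho}u_n)D^{2\varrho}$ has norm of size $\varepsilon\delta^{p-1}N_n^{2\varrho-2}$, which is not small uniformly in $n$, and the loss-of-analyticity-width device you invoke cannot be iterated inside a Neumann series --- if the $m$th term is estimated from $\textbf{H}_s$ into $\textbf{H}_{s-\sum_k\alpha_k}$ with $\sum_k\alpha_k\leq\alpha$, the factor $\prod_k\alpha_k^{-(2\varrho-2)}$ grows super-exponentially in $m$ and defeats $(C\varepsilon)^m$. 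This is exactly the phenomenon that makes (\ref{E1-1}) a singular perturbation problem and forces a Nash--Moser framework rather than an implicit function theorem.

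What you need, and what is consistent with the paper's own machinery, is not a uniform inverse but a tame inverse with a fixed polynomial loss, $\|(L^{(N_n)})^{-1}u\|_{s_1}\leq C N_n^{\mu}(s_2-s_1)^{-\tau}\|u\|_{s_2}$ for some $\mu$ depending on $\varrho$ (the analogue of (\ref{E3-4}), where $\mu=\tau+\kappa_0$); the convergence estimate (\ref{E3-20}) is designed to absorb exactly such a loss through the super-exponential decay of the errors, so your induction closes once the inverse bound is restated in this form and the exponents $\sigma_0<\bar\sigma<\sigma<k-1$ are adjusted accordingly. Relatedly, your closing remark that on $\textbf{S}^n$ ``no clusters of small divisors can form'' because each eigenvalue of $-\triangle+1$ is at least $1$ misses the point: the relevant divisors are the quantities $\omega_j^2+1-\varepsilon(\cdots)\omega_j^{2\varrho}$, in which the perturbative term dominates at high frequency when $\varrho\geq2$. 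The reason Theorem 4 carries no Cantor set is the smallness $\partial_u a(D^{2\varrho}u_n)=O(\delta^{p-1})$ of the coefficient together with the frequency truncation $|j+\overrightarrow{\rho}|\leq N_n$, not positivity of the spectrum of $-\triangle+1$, and this is the point your write-up should make quantitative.
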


The structure of the paper is as follows: In next section, we present some notations related to Lie group, homogeneous spaces and corresponding Laplace-Beltrami operator properties. Section 3 is devoted to the proof of Theorem 1, where we construct a suitable Nash-Moser iteration scheme. In the last section, we will prove a main Lemma (Lemma 12), which deals with the estimate of the linearized operators and plays a crucial role in the Nash-Moser iteration. The measure estimates is given in the appendix.

\section{Preliminaries}
In this section, we recall some basic conceptions and results in the representation theory of Lie group and homogeneous space, which can be found in the books  \cite{Br,Fa,Pro} and the paper \cite{Berti1}. Let $G$ be a compact topological group, and let $\textbf{L}^2(G):=\textbf{L}^2(G,\textbf{C})$ be the Lebesgue
space defined with respect to the normalized Haar measure $\mu$ of $G$. $(V,\rho_V)$ denotes a finite-dimensional unitary representation of $G$. It is a continuous homomorphism $x\mapsto\rho_V(x)$ which maps $G$ into the group of unitary transformations $U(V)\subset End(V)$; here $V$ denotes a finite-dimensional complex vector space. For fixed $\{v_1,\cdots,v_n\}$ (orthonormal basis) of $V$, we can describe the presentation by the unitary matrices
\begin{eqnarray}\label{E2-1}
U(x):=U^{V}(x):=\{U^V_{l,k}(x)\}=\{(\rho_V(x)v_l,v_k)\},~~l,~k=1,\cdots,n:=\dim(V).~~~
\end{eqnarray}

The following Peter-Weyl Theorem gives the Fourier analysis on the group. In the case of the standard torus, the irreducible representations of a group play the role of the exponential basis.
\begin{theorem}
Let $\hat{G}$ be the set of equivalence classes of irreducible unitary representations of the compact group $G$, for each $j\in\hat{G}$, let $\mathcal{M}_j:=\mathcal{M}_{V_j}$. Then the Hilbert decomposition holds
\begin{eqnarray*}
\textbf{L}^2(G)=\widehat{\bigoplus}_{j\in\hat{G}}\mathcal{M}_j.
\end{eqnarray*}
For $f\in\textbf{L}^2(G)$, we have the $\textbf{L}^2$ convergent ``Fourier series''
\begin{eqnarray*}
f(x)=\sum_{j\in\hat{G}}tr(f_j\textbf{e}_j(x)),~~f_j:=\int_{G}df(x)\bar{\textbf{e}}_j(x)\mu,
\end{eqnarray*}
where $\textbf{e}_j(x):=(\dim V_j)^{\frac{1}{2}}U_j(x)$ and the matrices $U_j(x):=U^{V_j}(x)$ are defined in (\ref{E2-1}). Here the matrix $\bar{\textbf{e}}_j(x)$ is the complex conjugate of $\textbf{e}_j(x)$, and $f_j$ are the Fourier coefficient of $f(x)$.
\end{theorem}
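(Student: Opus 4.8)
The plan is to deduce the statement from the spectral theory of compact self-adjoint operators on $\textbf{L}^2(G)$, exploiting that compactness of $G$ furnishes a finite bi-invariant normalized Haar measure $\mu$ and that $\textbf{L}^2(G)$ carries the continuous unitary right-regular representation $(R_gf)(x)=f(xg)$.

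First I would fix a continuous $\phi$ on $G$ with $\phi(x)=\overline{\phi(x^{-1})}$ and study the convolution operator $(T_\phi f)(x)=\int_G\phi(xy^{-1})f(y)\,d\mu(y)$. Three properties drive the argument: $T_\phi$ is self-adjoint on $\textbf{L}^2(G)$ because of the symmetry imposed on $\phi$; it is compact because its integral kernel $(x,y)\mapsto\phi(xy^{-1})$ is continuous on the compact space $G\times G$, hence Hilbert--Schmidt; and it commutes with every $R_g$, as one checks directly using bi-invariance of $\mu$. By the spectral theorem for compact self-adjoint operators, $\textbf{L}^2(G)=\overline{\ker T_\phi}\oplus\widehat{\bigoplus}_{\lambda\neq0}E_\lambda$ with each nonzero-eigenvalue eigenspace $E_\lambda$ finite-dimensional, and commutation with $R_g$ forces each $E_\lambda$ to be a finite-dimensional $R$-invariant subspace of $\textbf{L}^2(G)$.

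Next I would show by an approximate-identity argument that the closed span $W$ of all finite-dimensional $R$-invariant subspaces of $\textbf{L}^2(G)$ is everything: if $0\neq f\perp W$, choose a symmetric approximate identity $\phi$ concentrated near the neutral element $e$, so that $T_\phi f\to f$ in $\textbf{L}^2(G)$ and hence $T_\phi f\neq0$ for some such $\phi$; then $f$ meets some eigenspace $E_\lambda\subset W$ nontrivially, a contradiction, so $W=\textbf{L}^2(G)$. Decomposing each finite-dimensional $R$-invariant subspace into irreducibles, every irreducible summand is equivalent to exactly one $(V_j,\rho_{V_j})$, $j\in\hat{G}$, and in the orthonormal basis fixed for $V_j$ its vectors are linear combinations of the matrix coefficients $U^{V_j}_{l,k}$ of (\ref{E2-1}); thus the summand lies in $\mathcal{M}_j=\mathcal{M}_{V_j}$. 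The Schur orthogonality relations — produced by averaging an arbitrary linear map $V_j\to V_{j'}$ over $G$ to manufacture an intertwiner and then applying Schur's lemma — state exactly that $\{(\dim V_j)^{1/2}U^{V_j}_{l,k}\}_{j,l,k}$ is an orthonormal system. Hence the subspaces $\mathcal{M}_j$, each itself finite-dimensional and $R$-invariant (so $\mathcal{M}_j\subset W$), are mutually orthogonal, and combining $W\subset\widehat{\bigoplus}_{j}\mathcal{M}_j\subset\textbf{L}^2(G)=W$ yields the Hilbert decomposition $\textbf{L}^2(G)=\widehat{\bigoplus}_{j\in\hat{G}}\mathcal{M}_j$.

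Finally, for $f\in\textbf{L}^2(G)$, expanding in the orthonormal system $\{(\dim V_j)^{1/2}U^{V_j}_{l,k}\}$ and grouping the coefficients inside each block $\mathcal{M}_j$ gives $f(x)=\sum_{j\in\hat{G}}\mathrm{tr}\bigl(f_j\textbf{e}_j(x)\bigr)$; the normalization $\textbf{e}_j=(\dim V_j)^{1/2}U_j$ is precisely what makes the orthogonality relations come out with constant $1$ and forces $f_j=\int_G f(x)\bar{\textbf{e}}_j(x)\,d\mu$, up to the routine bookkeeping of matrix indices. I expect the main obstacle to be the density step: it is the only non-algebraic ingredient and the single place where compactness of $G$ enters essentially — through compactness of $T_\phi$ and the existence of a continuous symmetric approximate identity — whereas self-adjointness, the commutation with $R_g$, the Schur relations, and the final rearrangement are then routine.
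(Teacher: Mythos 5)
Your argument is correct, but note that the paper does not prove this statement at all: it is quoted as the classical Peter--Weyl theorem, with the proof deferred to the cited references (Br\"{o}cker--tom Dieck, Faraut, Procesi, and Berti--Procesi), so there is no ``paper proof'' to compare against line by line. What you have written is essentially the standard textbook proof: self-adjoint compact convolution operators $T_\phi$ commuting with the right-regular representation, the spectral theorem producing finite-dimensional $R$-invariant eigenspaces, an approximate-identity argument showing their closed span is all of $\textbf{L}^2(G)$ (if $f\perp W$ then $f\perp E_\lambda$ for every $\lambda\neq 0$, hence $T_\phi f=0$ for every symmetric $\phi$, contradicting $T_\phi f\to f$), and Schur orthogonality to identify the irreducible blocks with the spaces $\mathcal{M}_j$ of matrix coefficients and to normalize $\textbf{e}_j=(\dim V_j)^{1/2}U_j$ so that the Fourier coefficients come out as $f_j=\int_G f(x)\bar{\textbf{e}}_j(x)\,d\mu$. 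The one step you gloss over is the identification of an irreducible finite-dimensional $R$-invariant subspace $E\cong V_j$ with a subspace of $\mathcal{M}_j$: writing $f_i(xg)=\sum_k c_{ki}(g)f_k(x)$ and concluding that each $f_i$ is a combination of the matrix coefficients $c_{ki}$ requires evaluating $\textbf{L}^2$-elements pointwise, which is legitimate here because the relevant subspaces are eigenspaces of $T_\phi$ with continuous kernel, hence consist of continuous functions (or one projects onto $E$ by convolution with its character); this is routine but worth stating. With that caveat your outline is a complete and standard proof, more self-contained than the paper, which treats the result purely as background.
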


By the Schur orthogonality relations
\begin{eqnarray*}
\int_{G}tr(A\rho_V(x))\overline{tr(B\rho_V(x))}d\mu(x)=\frac{tr(AB^{\dag})}{\dim(V)},~~\forall~A,B\in End(V),
\end{eqnarray*}
we have that $\textbf{e}_{j,\sigma}(x)$ (the matrix coefficients of $\textbf{e}_{j}(x)$), $\sigma=1,\cdots,\dim V_j^2$ form an $L^2$-orthonormal basis for $\mathcal{M}_j$.

Next we introduce some properties of Laplace-Beltrami operator on the compact Lie groups $\mathcal{G}=(G\times\textbf{T}^n)/N$, where $G$ is simply connected and $N$ is finite and central. Let $G$ be a simply connected compact Lie group of simple type. Define a Riemannian metric on $G$ by
\begin{eqnarray*}
-(X,Y):=tr(Ad(X)\circ Ad(Y)),
\end{eqnarray*}
which is negative definite of the killing form, where $Ad(X)(\cdot):=[X,\cdot]$. Thus we can define the Laplace-Beltrami operator $\triangle$ on $G$ with respect to this metric. The following two results are taken from the book \cite{Pro}.
\begin{theorem}
For a simply connected compact Lie group $G$ of rank $r$, there is a one-to-one correspondence between the set of equivalence classes $\hat{G}$ of irreducible unitary representations and a discrete cone
\begin{eqnarray*}
\Lambda^+:=\Lambda^+(G)=\{j=\sum_{i=1}^rn_iw_i,n_i\in\textbf{N}\}\subset\textbf{R}^r
\end{eqnarray*}
generated by $r$ independent vectors $w_i\in\textbf{R}^r$.
\end{theorem}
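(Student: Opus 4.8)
The plan is to deduce this from the classical \emph{theorem of the highest weight} for the complexified Lie algebra, together with structure facts already available on compact groups. First I would pass to the Lie algebra level: a compact simply connected Lie group has no abelian factors, so the Lie algebra $\mathsf{g}$ of $G$ is semisimple and $\mathsf{g}_{\textbf{C}}:=\mathsf{g}\otimes\textbf{C}$ is complex semisimple. Fix a maximal torus $T\subset G$; then $\mathrm{Lie}(T)$ is $r$-dimensional and its complexification $\mathsf{h}$ is a Cartan subalgebra of $\mathsf{g}_{\textbf{C}}$ with $\dim_{\textbf{C}}\mathsf{h}=r$. After choosing positive roots $\Delta^{+}\subset\mathsf{h}^{*}$, simple roots $\alpha_{1},\dots,\alpha_{r}$, and coroots $\alpha_{i}^{\vee}$, I would introduce the fundamental weights $w_{1},\dots,w_{r}\in\mathsf{h}^{*}_{\textbf{R}}\cong\textbf{R}^{r}$ determined by $\langle w_{i},\alpha_{j}^{\vee}\rangle=\delta_{ij}$; these are $r$ linearly independent vectors, and the cone $\Lambda^{+}=\{\sum_{i}n_{i}w_{i}:n_{i}\in\textbf{N}\}$ is precisely the set of dominant integral weights.

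The heart of the argument is a bijection between $\Lambda^{+}$ and the isomorphism classes of finite-dimensional irreducible $\mathsf{g}_{\textbf{C}}$-modules. Uniqueness is the formal half: a finite-dimensional irreducible $\mathsf{g}_{\textbf{C}}$-module $V$ decomposes into $\mathsf{h}$-weight spaces and, being finite-dimensional, carries a highest weight $\lambda$ --- a weight annihilated by all positive root vectors --- whose weight vector generates $V$, so $\lambda$ is an isomorphism invariant determining $V$. Restricting $V$ to the $\mathsf{sl}_{2}$-triple attached to each simple root $\alpha_{i}$ and using finiteness of the $\mathsf{sl}_{2}$-submodule generated by the highest weight vector forces $\langle\lambda,\alpha_{i}^{\vee}\rangle\in\textbf{N}$, i.e.\ $\lambda\in\Lambda^{+}$. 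Existence is the substantive half: for $\lambda\in\Lambda^{+}$ one forms the Verma module $M(\lambda)$, takes its unique irreducible quotient $L(\lambda)$, and shows $L(\lambda)$ is finite-dimensional precisely because $\lambda$ is dominant integral --- its weight set is Weyl-group invariant and bounded, hence finite; alternatively $L(\lambda)$ is realised geometrically via the Borel--Weil theorem on $G/T$.

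It then remains to transfer this classification from $\mathsf{g}_{\textbf{C}}$ to the group $G$. Since $G$ is \emph{simply connected}, every representation of $\mathsf{g}$ --- equivalently, by complex-linear extension, every representation of $\mathsf{g}_{\textbf{C}}$ --- integrates to a representation of $G$, with differentiation as inverse; both operations preserve irreducibility, so finite-dimensional irreducible $\mathsf{g}_{\textbf{C}}$-modules correspond bijectively to finite-dimensional irreducible representations of $G$. Averaging an arbitrary Hermitian inner product over the normalized Haar measure $\mu$ makes any finite-dimensional representation of the compact group $G$ unitary, and by the Peter--Weyl theorem (Theorem 5) every irreducible unitary representation of a compact group is finite-dimensional; hence $\hat{G}$ is exactly the set of isomorphism classes of finite-dimensional irreducible representations of $G$. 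Composing the three bijections gives the asserted one-to-one correspondence between $\hat{G}$ and $\Lambda^{+}$.

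I expect the main obstacle to be the existence (surjectivity) half of the highest-weight theorem: producing, for every dominant integral $\lambda$, an actual finite-dimensional irreducible module of highest weight $\lambda$. That step needs real input --- the Verma-module construction together with the finiteness argument, or the Borel--Weil realisation --- whereas uniqueness, the integrality of highest weights, and the passage between $G$ and $\mathsf{g}_{\textbf{C}}$ are formal consequences of $\mathsf{sl}_{2}$-representation theory, simple connectedness, and the compactness of $G$.
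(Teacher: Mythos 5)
Your outline is correct: it is the classical theorem of the highest weight (uniqueness via $\mathsf{sl}_2$-theory, existence via Verma modules or Borel--Weil), transferred to $G$ using simple connectedness to integrate Lie algebra representations and compactness plus Peter--Weyl to identify $\hat{G}$ with finite-dimensional irreducibles. The paper itself gives no proof --- this statement is quoted from Procesi's book \cite{Pro} --- and the argument in that reference is essentially the same highest-weight classification you describe, so your proposal matches the intended proof.
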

Above result describes all the irreducible representations. Here the rank of a Lie group $G$ is defined by the dimension of any maximal connected commutative subgroup of $G$ (maximal torus). The $\{w_1,\cdots,w_r\}$ are called the fundamental weights of the group, and $\Lambda^+$ is called the cone of dominant weight. The irreducible representation of $G$ corresponding to the dominant weight $j=0$ is the trivial representations on $V_0=\textbf{C}$.

The matrix coefficients of an irreducible representation are eigenfunctions of the Laplacian. Due to the Laplacian is a real operator, $\bar{\textbf{e}}_{j,\sigma}(x)$ is an eigenvector of $\triangle$ if $\textbf{e}_{j,\sigma}(x)\in\mathcal{M}_j$ is an eigenvector with the same eigenvalue. Thus $\bar{\textbf{e}}_{j,\sigma}(x)\in\mathcal{M}_{j'}$ for some $j'\in\Lambda^+$. Moreover, since the matrix $\bar{\textbf{e}}_{j,\sigma}(x)$ is the dual representation on $V_j^*$ of the matrix $\textbf{e}_j(x)$, we have $V_{j'}=V_j^*$.
\begin{theorem}
Each $\mathcal{M}_j$ is an eigenspace of the Laplace Beltrami operator $\triangle$ with eigenvalue
\begin{eqnarray*}
-|j+\rho|^2+|\rho|^2,~~~~~\rho:=\sum_{i=1}^rw_i,
\end{eqnarray*}
where $d_j:=\dim(\mathcal{M}_j)\leq|j+\rho|^{\dim(G)-r}$.
\end{theorem}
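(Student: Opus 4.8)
The plan is to derive the statement from the representation theory already recalled, by reducing the Laplace--Beltrami operator on $G$ to the Casimir element of the universal enveloping algebra. First I would fix a basis $X_1,\dots,X_d$ of the Lie algebra $\textbf{g}$ of $G$ that is orthonormal for the metric $-(X,Y)=tr(Ad(X)\circ Ad(Y))$, and note that, since this metric is $Ad$--invariant (so the induced metric on $G$ is bi--invariant), the Laplace--Beltrami operator on $\textbf{C}^{\infty}(G)$ is exactly $\triangle=\sum_{i=1}^{d}X_i^{2}$ with each $X_i$ acting as a left--invariant first--order operator. The element $\Omega:=\sum_{i}X_i^{2}\in U(\textbf{g})$ is the Casimir attached to this invariant form and hence lies in the centre of $U(\textbf{g})$; by Schur's lemma it acts as a scalar on every irreducible unitary representation $V_j$, and therefore $\triangle$ acts as the same scalar on the whole matrix--coefficient block $\mathcal{M}_j\cong V_j\otimes V_j^{*}$ of the Peter--Weyl decomposition (Theorem 5). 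This already shows that each $\mathcal{M}_j$ sits inside one eigenspace of $\triangle$; it remains to identify the scalar and to bound $d_j=\dim\mathcal{M}_j=(\dim V_j)^{2}$.

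To identify the scalar I would pass to $\textbf{g}_{\textbf{C}}$, choose a Cartan subalgebra $\textbf{h}$ and root vectors, write $\Omega$ in the form $\sum_{l}H_l^{2}+\sum_{\alpha>0}(E_\alpha E_{-\alpha}+E_{-\alpha}E_\alpha)$ with $\{H_l\}$ orthonormal in $\textbf{h}$ and the $E_{\pm\alpha}$ normalized by the invariant form, and then push the raising operators to the right using $[E_\alpha,E_{-\alpha}]\in\textbf{h}$. Evaluating the result on a highest--weight vector of $V_j$, with highest weight $\lambda_j$ which under the parametrization of $\hat G$ by $\Lambda^{+}$ (Theorem 6) we identify with $j$, all raising terms annihilate it and the surviving scalar is $\langle\lambda_j,\lambda_j\rangle+\langle\lambda_j,2\rho\rangle=\langle\lambda_j+\rho,\lambda_j+\rho\rangle-\langle\rho,\rho\rangle$, with $\rho=\tfrac12\sum_{\alpha>0}\alpha=\sum_{i=1}^{r}w_i$ and $\langle\cdot,\cdot\rangle$ the form induced on weight space by the metric. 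Pinning the overall sign by the requirement that $\triangle$ be nonpositive then gives that $\triangle$ acts on $\mathcal{M}_j$ as $-|j+\rho|^{2}+|\rho|^{2}$, which is the asserted eigenvalue.

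For the multiplicity I would use the Weyl dimension formula $\dim V_j=\prod_{\alpha>0}\dfrac{\langle\lambda_j+\rho,\alpha\rangle}{\langle\rho,\alpha\rangle}$, a product over the positive roots, of which there are $\tfrac12(\dim G-r)$ because $\dim\textbf{g}=r+|\Phi|=r+2|\Phi^{+}|$. Bounding each factor by Cauchy--Schwarz, $\langle\lambda_j+\rho,\alpha\rangle\le|\lambda_j+\rho|\,|\alpha|$, and using that $\langle\rho,\alpha\rangle$ is bounded below by a positive constant depending only on $G$ (since $\rho$ is strictly dominant), one gets $\dim V_j\le C(G)\,|j+\rho|^{(\dim G-r)/2}$; squaring and absorbing the constant into the normalization of the metric yields $d_j\le|j+\rho|^{\dim G-r}$. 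The only genuinely delicate point in all of this is the bookkeeping with normalizations: one must check that the metric coming from $-tr(Ad(\cdot)\circ Ad(\cdot))$ is precisely the one for which the Casimir eigenvalue has the clean form $|\lambda+\rho|^{2}-|\rho|^{2}$ and for which the dimension bound holds with constant $1$. Granting that, everything else is standard highest--weight theory, so I do not expect a serious obstacle beyond these normalization checks.
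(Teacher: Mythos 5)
The paper does not prove this statement at all: Theorem 7 is quoted verbatim from Procesi's book (the text says ``The following two results are taken from the book \cite{Pro}''), so there is no internal proof to compare against. Your argument is the standard one that the citation points to, and it is essentially correct: for a bi-invariant metric the Laplace--Beltrami operator coincides with the Casimir element acting by left-invariant vector fields, Schur's lemma makes it scalar on each irreducible block $\mathcal{M}_j\cong V_j\otimes V_j^{*}$ of the Peter--Weyl decomposition, Freudenthal's computation on a highest-weight vector identifies the scalar as $\langle\lambda_j,\lambda_j+2\rho\rangle=|j+\rho|^{2}-|\rho|^{2}$ (nonpositive after the sign is fixed, since $\langle j,\rho\rangle\ge0$ for dominant $j$), and the Weyl dimension formula with $|\Phi^{+}|=\tfrac12(\dim G-r)$ positive roots gives $d_j=(\dim V_j)^{2}\le C(G)\,|j+\rho|^{\dim G-r}$. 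The one soft spot is your claim that the constant $C(G)$ can be ``absorbed into the normalization of the metric'' so as to get the bound with constant exactly $1$: rescaling the metric rescales $|j+\rho|$ and the Casimir eigenvalue simultaneously, so you cannot independently arrange both the clean eigenvalue formula $-|j+\rho|^{2}+|\rho|^{2}$ and constant $1$ in the dimension bound; what your argument honestly yields is $d_j\le C(G)|j+\rho|^{\dim G-r}$ (equivalently, the stated bound for $|j+\rho|$ large). This discrepancy is immaterial for everything the paper does with the estimate, since only the polynomial growth rate in $|j+\rho|$ enters the small-divisor analysis, but you should state the bound with the constant rather than assert it can be normalized away.
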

Set the positive simple roots $\alpha_1,\cdots,\alpha_r\in\textbf{R}^r$ of the group $G$, then the eigenvalues and the eigenfunctions of the Laplace operator can be described, see\cite{Pro} for more details. They satisfy the relations
\begin{eqnarray*}
(w_i,\alpha_j)=\frac{1}{2}\delta_{i,j}|\alpha_j|^2,~~\forall~i,j=1,\cdots,r,
\end{eqnarray*}
where $\delta_{i,j}$ denotes the Kronecker symbol. Define the cone
\begin{eqnarray*}
\mathcal{R}^+:=\{\alpha=\sum_{i=1}^{r}n_i\alpha_i,n_i\in\textbf{N}\}\subset\textbf{R}^r
\end{eqnarray*}
generated by the natural combinations of the positive simple roots. We define the lattice
\begin{eqnarray*}
\Lambda:=\{j=\sum_{i=1}^{r}n_iw_i,n_i\in\textbf{Z}\}\subset\textbf{R}^r
\end{eqnarray*}
generated by the fundamental weights $w_1,\cdots,w_r$ and $\Lambda^{++}:=\rho+\Lambda^+$.

\textbf{Definition 1.}
For $i,j\in\Lambda$, we say that $i\geq j$ if $i=j+\alpha$ for some $\alpha\in\mathcal{R}^+$.

The following two results are taking from\cite{Berti1}, and we omit the proof.
\begin{lemma}
There is $c\in(0,1)$ such that $\forall i\geq j$, $i\in\Lambda^+$, $j\in\Lambda$ with $(\rho,j)>0$, one has $|i+\rho|\geq c|j+\rho|$.
\end{lemma}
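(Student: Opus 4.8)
The plan is to argue via the geometry of the root system: the condition $i \geq j$ means $i = j + \alpha$ with $\alpha \in \mathcal{R}^+$, so $i+\rho = (j+\rho) + \alpha$. I would first reduce to showing a one-sided estimate $|i+\rho| \geq c|j+\rho|$ by controlling the angle between $j+\rho$ and the positive cone. Since $j \in \Lambda$ with $(\rho,j) > 0$ and $i \in \Lambda^+$, both $i+\rho$ and $j+\rho$ lie (weakly) inside the dominant chamber $\Lambda^{++} = \rho + \Lambda^+$ or at least in its closure up to the constraint on $(\rho,j)$; the key point is that adding a nonnegative combination of simple roots $\alpha$ cannot decrease the norm too much because $\alpha$ has nonnegative inner product with every dominant weight. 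Concretely, I would compute
\begin{eqnarray*}
|i+\rho|^2 = |j+\rho|^2 + 2(j+\rho,\alpha) + |\alpha|^2,
\end{eqnarray*}
and the cross term $(j+\rho,\alpha) = (j,\alpha) + (\rho,\alpha)$ is nonnegative once we know $j+\rho$ pairs nonnegatively with $\mathcal{R}^+$; then $|i+\rho| \geq |j+\rho|$ and one can even take $c=1$. The subtlety is that $j$ is only in the lattice $\Lambda$, not the cone $\Lambda^+$, so $(j,\alpha)$ need not be nonnegative, and this is where the hypothesis $(\rho,j) > 0$ (equivalently $j+\rho$ strictly on the $\rho$-side) must be used together with $i \in \Lambda^+$.

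The main step is therefore to show that under $i \in \Lambda^+$ and $(\rho,j)>0$, the vector $j+\rho$ cannot be "very anti-dominant." I would write $i + \rho \in \Lambda^{++}$ lies in the open dominant Weyl chamber $C$, for which there is a uniform lower bound on $\cos$ of the angle to $\rho$: there is $c_1>0$ with $(x,\rho) \geq c_1 |x|\,|\rho|$ for all $x \in \overline{C}$. Since $j+\rho = (i+\rho) - \alpha$ with $\alpha \in \mathcal{R}^+$ and $(\alpha,\rho) \geq 0$, we get $(j+\rho,\rho) \leq (i+\rho,\rho)$; combined with $(j+\rho,\rho) = (j,\rho)+|\rho|^2 > |\rho|^2 > 0$ we localize $j+\rho$ in a region bounded away from the walls perpendicular to $\rho$. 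Then the decomposition $|i+\rho|^2 = |j+\rho|^2 + 2(j+\rho,\alpha) + |\alpha|^2$ shows $|i+\rho|^2 \geq |j+\rho|^2$ provided $(j+\rho,\alpha) \geq 0$, and the latter follows because $\alpha \in \mathcal{R}^+ \subset \mathcal{R}^+$ is a nonnegative combination of simple roots and $j+\rho$, being in the same chamber direction as $\rho$ by the localization just obtained, has nonnegative pairing with all positive roots. A cleaner route, if the direct pairing argument is awkward, is to invoke the partial order: any element of $\Lambda^{++}$ that is $\geq$ another element $y$ with $(y,\rho)>0$ dominates $y$ in norm, which is a standard fact about highest weights (the $\rho$-shifted weights form a poset in which going up the order increases the norm).

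I expect the genuine obstacle to be the case where $j$ itself is far outside $\Lambda^+$, so that $j+\rho$ sits in a non-dominant chamber; there the naive inequality $|i+\rho|\geq|j+\rho|$ with $c=1$ can fail and one truly needs $c<1$. To handle this I would use the Weyl group: let $w$ be the Weyl group element with $w(j+\rho) \in \overline{C}$; then $|w(j+\rho)| = |j+\rho|$ and $w(j+\rho) \leq i+\rho$ need not hold, but the dominance $i \geq j$ plus $i+\rho$ dominant forces a comparison after applying $w$, losing at most a factor depending only on the (finite) Weyl group and the fixed angles of the root system — hence a uniform $c = c(G) \in (0,1)$. Because this is quoted from \cite{Berti1} I would, in the paper, simply cite it; in a self-contained proof the compactness of the unit sphere intersected with the finitely many relevant chambers guarantees such a $c$ exists, and the hypothesis $(\rho,j)>0$ is exactly what rules out the degenerate direction $j+\rho \perp \rho$ along which no positive $c$ could work.
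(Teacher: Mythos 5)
The paper itself does not prove this lemma: it is quoted from Berti--Procesi \cite{Berti1} with the proof explicitly omitted, so your proposal can only be measured against the standard argument there. Measured that way, it has a genuine gap. The step you lean on in your second paragraph --- that $(j+\rho,\rho)>|\rho|^2>0$ ``localizes $j+\rho$ in a region bounded away from the walls'' and hence gives $(j+\rho,\alpha)\geq 0$ for all $\alpha\in\mathcal{R}^+$ --- is false. Positivity of the pairing with $\rho$ does not place a vector in the closed dominant chamber: already in rank $2$ a vector can satisfy $(x,\rho)\gg 0$ while $(x,\alpha_1)<0$. Consequently your conclusion $|i+\rho|\geq|j+\rho|$ with $c=1$ cannot be right in general (which is precisely why the lemma only asserts some $c\in(0,1)$), and the ``standard fact'' you invoke about the order increasing the norm holds for dominant weights, not for arbitrary $j\in\Lambda$. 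Your fallback via the Weyl group and ``compactness of the unit sphere'' is not an argument either: the vectors $i+\rho$, $j+\rho$ range over an unbounded set, so no compactness is available, and conjugating $j+\rho$ into the dominant chamber by $w$ destroys the relation $i=j+\alpha$ that you need to compare the two norms.

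The missing idea is that the hypothesis $(\rho,j)>0$ is used to bound the \emph{length of} $\alpha$, not the direction of $j+\rho$. Write $i=j+\alpha$ with $\alpha=\sum_k m_k\alpha_k$, $m_k\in\textbf{N}$. Since $(\rho,\alpha_k)=\frac12|\alpha_k|^2>0$, there is $c_1>0$ depending only on the root system with $(\rho,\alpha)\geq c_1|\alpha|$. From $(\rho,j)=(\rho,i)-(\rho,\alpha)>0$ and the dominance of $i$ (so that $|i|\leq|i+\rho|$, because $|i+\rho|^2=|i|^2+2(i,\rho)+|\rho|^2\geq|i|^2$) you get
\begin{eqnarray*}
c_1|\alpha|\leq(\rho,\alpha)<(\rho,i)\leq|\rho|\,|i|\leq|\rho|\,|i+\rho|,
\end{eqnarray*}
hence $|\alpha|\leq C|i+\rho|$ with $C=|\rho|/c_1$. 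Then
\begin{eqnarray*}
|j+\rho|=|i+\rho-\alpha|\leq|i+\rho|+|\alpha|\leq(1+C)|i+\rho|,
\end{eqnarray*}
which is the claim with $c=(1+C)^{-1}\in(0,1)$. Your expansion $|i+\rho|^2=|j+\rho|^2+2(j+\rho,\alpha)+|\alpha|^2$ and the attempt to control the sign of the cross term are a detour; no sign information on $(j+\rho,\alpha)$ is needed once $|\alpha|$ is bounded linearly by $|i+\rho|$.
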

\begin{lemma}
For any simply connected Lie group $\mathcal{G}$, there is $D\in\textbf{N}$ such that $(w_i,w_j)\in D^{-1}\textbf{Z}$. Hence
\begin{eqnarray*}
|j|^2,~|j+\rho|^2,~(\rho,j)\in D^{-1}\textbf{Z},~~\forall j\in\Lambda^+.
\end{eqnarray*}
\end{lemma}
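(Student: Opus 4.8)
The plan is to reduce the statement to standard structure theory of the root system of $G$. The torus factor in $\mathcal{G}=(G\times\textbf{T}^n)/N$ is harmless, since on $\textbf{T}^n$ the relevant weights are integer vectors and contribute only integer inner products; so we may assume $\mathcal{G}=G$ is simply connected of simple type (for semisimple $G$ one then takes $D$ to be a common multiple of the integers produced for the simple factors). The two ingredients I would invoke are: (i) the Cartan matrix $C=(C_{k,l})$, $C_{k,l}:=2(\alpha_k,\alpha_l)/(\alpha_l,\alpha_l)=\langle\alpha_k,\alpha_l^\vee\rangle$, has integer entries and $\det C\in\textbf{N}$ (it equals the order of the center of the simply connected group); and (ii) the metric $-(X,Y)=tr(Ad(X)\circ Ad(Y))$ restricts on the lattice $\Lambda$ to a $\textbf{Q}$-valued bilinear form.

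First I would use the relation $(w_i,\alpha_j)=\frac{1}{2}\delta_{i,j}|\alpha_j|^2$ recorded above. Writing $w_i=\sum_k B_{i,k}\alpha_k$ (legitimate, since the simple roots $\alpha_1,\dots,\alpha_r$ and the fundamental weights $w_1,\dots,w_r$ are both bases of $\textbf{R}^r$) and setting $d_j:=\frac{1}{2}|\alpha_j|^2$, the relation reads $\sum_k B_{i,k}(\alpha_k,\alpha_j)=d_j\delta_{i,j}$; dividing by $d_j$ gives $\sum_k B_{i,k}C_{k,j}=\delta_{i,j}$, that is $B=C^{-1}$. By Cramer's rule $B_{i,k}\in(\det C)^{-1}\textbf{Z}$ for all $i,k$. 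Moreover $(w_i,w_j)=\sum_k B_{i,k}(\alpha_k,w_j)=\sum_k B_{i,k}\,d_k\,\delta_{j,k}=B_{i,j}\,d_j$, so the whole problem reduces to finding a common denominator for the finitely many numbers $d_1,\dots,d_r$.

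For this I would argue directly from the Killing form. Let $t_\lambda\in\textbf{h}$ be defined by $(t_\lambda,h)=\lambda(h)$ for all $h$ in the Cartan subalgebra, and $(\lambda,\mu):=(t_\lambda,t_\mu)$. Since the Killing form on $\textbf{h}$ is $\sum_{\beta}\beta(\cdot)\beta(\cdot)$ summed over all roots $\beta$, one has $(\alpha,\alpha)=\sum_\beta\beta(t_\alpha)^2=\sum_\beta(\beta,\alpha)^2=\frac{1}{4}(\alpha,\alpha)^2\sum_\beta\langle\beta,\alpha^\vee\rangle^2$ for any root $\alpha$, whence $(\alpha,\alpha)=4\big(\sum_\beta\langle\beta,\alpha^\vee\rangle^2\big)^{-1}$, the reciprocal of a positive integer because $\langle\beta,\alpha^\vee\rangle\in\textbf{Z}$. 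Hence each $d_j=2\big(\sum_\beta\langle\beta,\alpha_j^\vee\rangle^2\big)^{-1}$ has denominator dividing some $m\in\textbf{N}$, so that $(w_i,w_j)=B_{i,j}d_j\in D^{-1}\textbf{Z}$ with $D:=m\,\det C$. Finally, for the ``Hence'' part: any $j\in\Lambda^+$ is $j=\sum_i n_i w_i$ with $n_i\in\textbf{N}$ and $\rho=\sum_i w_i$, so $j+\rho=\sum_i(n_i+1)w_i$ and therefore $|j|^2=\sum_{i,k}n_in_k(w_i,w_k)$, $(\rho,j)=\sum_{i,k}n_k(w_i,w_k)$, and $|j+\rho|^2=\sum_{i,k}(n_i+1)(n_k+1)(w_i,w_k)$ are finite $\textbf{Z}$-linear combinations of the numbers $(w_i,w_k)\in D^{-1}\textbf{Z}$, hence lie in $D^{-1}\textbf{Z}$.

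There is no genuine obstacle here: the lemma is a bookkeeping consequence of the integrality of the Cartan matrix, the finiteness of $\det C$, and the rationality of the Killing form on the weight lattice. The only point requiring a little care is that the metric is fixed to be $-tr(Ad(\cdot)\circ Ad(\cdot))$ rather than the customary normalization in which short roots have squared length $2$; consequently the rationality of the $d_j$ must be extracted from the self-referential identity $(\alpha,\alpha)=4/\sum_\beta\langle\beta,\alpha^\vee\rangle^2$ above rather than assumed. Once this is available the required $D$ is the least common multiple of finitely many explicit positive integers, and can in fact be taken quite small: $\det C\le r+1$ in type $A_r$ and $\det C\le 4$ in all other types, while the denominators of the $d_j$ come only from the at most three distinct root lengths.
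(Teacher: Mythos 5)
Your proposal is correct, but note that the paper itself offers nothing to compare against: Lemma 2 is stated with the remark that it is taken from the cited work of Berti and Procesi and its proof is omitted. Your argument is the standard one and is essentially what that reference does: write the fundamental weights in the basis of simple roots, identify the change-of-basis matrix with the inverse Cartan matrix so that its entries lie in $(\det C)^{-1}\textbf{Z}$, and then control the denominators of the half squared root lengths $d_j=\frac{1}{2}|\alpha_j|^2$. The only point specific to this paper's setup — that the metric is the trace form $-tr(Ad(X)\circ Ad(Y))$ rather than a normalization in which $(\alpha_j,\alpha_j)$ is prescribed — you handle correctly via the identity $(\alpha,\alpha)=4\bigl(\sum_{\beta}\langle\beta,\alpha^{\vee}\rangle^{2}\bigr)^{-1}$, which shows each $d_j$ is the reciprocal of a positive integer times $2$; together with $(w_i,w_j)=B_{i,j}d_j$ this yields $D=m\det C$, and the ``Hence'' part follows since $|j|^2$, $(\rho,j)$ and $|j+\rho|^2$ are $\textbf{Z}$-linear combinations of the $(w_i,w_k)$. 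The reduction to simple factors and the remark about the torus component are harmless and correct.
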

The eigenspaces of the Laplace operator on $G\times\textbf{T}^n$ are
\begin{eqnarray*}
\mathcal{M}_{j_1}e^{ij_2\cdot x_2}~~with~(x_1,x_2)\in G\times\textbf{T}^n,~~(j_1,j_2)\in\Lambda^+\times\textbf{Z}^n,
\end{eqnarray*}
the eigenfunctions $\textbf{e}_{j_1,\sigma}(x_1)e^{ij_2\cdot x_2}$, $1\leq\sigma\leq d_j$, and the eigenvalues $-|j_1+\rho|^2+|\rho|^2-|j_2|^2$.
\begin{theorem}
Let $H$ be a closed subgroup of a Lie group $\mathcal{G}$. Then there is a unique manifold structure on the quotient space $\mathcal{G}/H$, such that the projection map
$\pi:\mathcal{G}\rightarrow\mathcal{G}/H$ is a smooth submersion. Moreover, given a biinvariant metric on $\mathcal{G}$, the projection $\pi$ induces on $\mathcal{G}/H$ a Riemannian structure such that the Laplace-Beltrami operator on $C^{\infty}(\mathcal{G}/H,\textbf{C})$ is identified with the Laplace-Beltrami operator on
\begin{eqnarray*}
C_{inv}^{\infty}(\mathcal{G},\textbf{C}):=\{f\in C^{\infty}(\mathcal{G},\textbf{C})~such~that~f(x)=f(xg),~\forall x\in\mathcal{G},~~g\in H\},
\end{eqnarray*}
and the diagram commutes:
\[ \begin{CD}
C^{\infty}(\mathcal{G}/H,\textbf{C}) @>\text{$\pi^*$}>> C_{inv}^{\infty}(\mathcal{G},\textbf{C})\\
@V\triangle_{\mathcal{G}/H} VV @V \triangle_{G} VV\\
C^{\infty}(\mathcal{G}/H,\textbf{C})@>\text{$\pi^*$}>>C_{inv}^{\infty}(\mathcal{G},\textbf{C}).
\end{CD} \]
\end{theorem}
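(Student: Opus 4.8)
\textbf{Smooth structure.} The plan is to first observe that, $H$ being closed in the Lie group $\mathcal{G}$, Cartan's closed subgroup theorem makes it an embedded Lie subgroup; the quotient manifold theorem then supplies a unique smooth structure on the coset space $\mathcal{G}/H$ for which the orbit map $\pi:\mathcal{G}\rightarrow\mathcal{G}/H$ is a smooth submersion. I would read off uniqueness from the universal property of surjective submersions, which simultaneously identifies $C^{\infty}(\mathcal{G}/H,\textbf{C})$ with the subspace of those $f\in C^{\infty}(\mathcal{G},\textbf{C})$ that are constant on every coset $gH$---that is, with $C_{inv}^{\infty}(\mathcal{G},\textbf{C})$---the identification being $f\mapsto\pi^{*}f$.

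\textbf{Induced Riemannian structure.} Given a biinvariant metric $\langle\cdot,\cdot\rangle$ on $\mathcal{G}$, at each point $g\in\mathcal{G}$ I would split $T_{g}\mathcal{G}=\mathcal{V}_{g}\oplus\mathcal{H}_{g}$ into the vertical space $\mathcal{V}_{g}:=\ker d\pi_{g}=T_{g}(gH)$ and its $\langle\cdot,\cdot\rangle$-orthogonal complement $\mathcal{H}_{g}$. Since the metric is invariant under right translation by $H$, this splitting is equivariant along each fibre, and $d\pi_{g}\colon\mathcal{H}_{g}\rightarrow T_{\pi(g)}(\mathcal{G}/H)$ becomes a linear isomorphism that transports an inner product independent of the chosen representative $g$; this is precisely a Riemannian metric on $\mathcal{G}/H$ making $\pi$ a Riemannian submersion, and left invariance of $\langle\cdot,\cdot\rangle$ renders the quotient metric $\mathcal{G}$-homogeneous.

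\textbf{Commutativity of the diagram.} I would argue in two steps. First, every right translation $R_{h}$ with $h\in H$ is an isometry of $(\mathcal{G},\langle\cdot,\cdot\rangle)$, hence commutes with $\triangle_{\mathcal{G}}$; therefore $\triangle_{\mathcal{G}}$ preserves $C_{inv}^{\infty}(\mathcal{G},\textbf{C})$ and, via $\pi^{*}$, induces an operator $\widetilde{\triangle}$ on $C^{\infty}(\mathcal{G}/H,\textbf{C})$ characterised by $\pi^{*}\widetilde{\triangle}f=\triangle_{\mathcal{G}}(\pi^{*}f)$, which already makes the right vertical arrow of the diagram well defined. Second, I would identify $\widetilde{\triangle}$ with $\triangle_{\mathcal{G}/H}$. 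One route exploits that the fibres $gH$ are totally geodesic: for left-invariant fields $X,Y$ tangent to $H$ the biinvariant Levi-Civita connection gives $\nabla_{X}Y=\tfrac{1}{2}[X,Y]\in\mathrm{Lie}(H)$, so the second fundamental form of $H\subset\mathcal{G}$ vanishes and, by left translation, so does that of every coset; expanding $\triangle_{\mathcal{G}}(\pi^{*}f)$ in a local orthonormal frame split into horizontal and vertical fields, the vertical fields annihilate $\pi^{*}f$, the mean-curvature term thus drops out, and the horizontal part projects to $\pi^{*}(\triangle_{\mathcal{G}/H}f)$ by O'Neill's relations for the submersion. A sturdier route avoids this computation: $\widetilde{\triangle}$ is a second-order operator with the same principal symbol as $\triangle_{\mathcal{G}/H}$ (because $\pi$ is a Riemannian submersion, $|\pi^{*}\xi|_{\mathcal{G}}=|\xi|_{\mathcal{G}/H}$), it annihilates constants, and it is self-adjoint with respect to $\pi_{*}\mu$, which is a constant multiple of the Riemannian volume of $\mathcal{G}/H$ since the fibres are mutually isometric; a self-adjoint second-order operator with the Laplacian's symbol and no zeroth-order term must coincide with $\triangle_{\mathcal{G}/H}$.

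The step I expect to be the real obstacle is this last identification $\widetilde{\triangle}=\triangle_{\mathcal{G}/H}$: the care lies in keeping the horizontal/vertical bookkeeping of $\nabla^{\mathcal{G}}$ clean enough that the vertical contributions cancel. I would also record a representation-theoretic shortcut in the spirit of the rest of the paper: for a biinvariant metric $-\triangle_{\mathcal{G}}$ is, up to a constant, the Casimir operator acting through the right regular representation, hence by the Peter--Weyl theorem stated above it acts by a scalar on each isotypic block; being scalar on each block it automatically preserves every subrepresentation, in particular the $H$-invariants, so $\triangle_{\mathcal{G}}$ restricts to $C_{inv}^{\infty}(\mathcal{G},\textbf{C})$, and through the submersion metric the resulting operator is identified with $\triangle_{\mathcal{G}/H}$.
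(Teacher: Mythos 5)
Your proposal is correct, but there is nothing in the paper to compare it against: the paper states this result in its preliminaries as a known background theorem and gives no proof, deferring to the literature (it points to Besse's book and, implicitly, to the setup of Berti--Procesi, where this is the standard quotient-manifold/Riemannian-submersion statement). Your argument is the standard one and it is sound: Cartan's closed subgroup theorem plus the quotient manifold theorem give the unique smooth structure and the identification $\pi^{*}:C^{\infty}(\mathcal{G}/H)\cong C^{\infty}_{inv}(\mathcal{G})$; right $H$-invariance of the biinvariant metric makes the horizontal inner product descend, so $\pi$ is a Riemannian submersion; and you correctly isolate the one genuinely delicate point, namely that $\triangle_{\mathcal{G}}(\pi^{*}f)=\pi^{*}(\triangle_{\mathcal{G}/H}f)$ requires the mean-curvature term in the submersion formula to vanish, which you obtain from the fibres $gH$ being totally geodesic ($\nabla_{X}Y=\tfrac12[X,Y]\in\mathrm{Lie}(H)$ for $X,Y\in\mathrm{Lie}(H)$, transported by the isometries $L_{g}$). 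Two small caveats: your ``sturdier route'' (same principal symbol, annihilates constants, self-adjoint with respect to $\pi_{*}\mu$ forces the first-order term to vanish) tacitly uses compactness of $\mathcal{G}$ for the integration by parts and uses that the fibres are mutually isometric so that $\pi_{*}\mu$ is a constant multiple of the quotient volume --- both true in the paper's setting but worth saying explicitly since the theorem is stated for a general Lie group $\mathcal{G}$; and the closing Casimir/Peter--Weyl remark only shows that $\triangle_{\mathcal{G}}$ preserves $C^{\infty}_{inv}$, which you had already established more simply from $R_{h}$ being isometries, so it is a pleasant consistency check rather than an alternative proof of the identification with $\triangle_{\mathcal{G}/H}$.
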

The action $(g,x)\mapsto gx$ of a group $\mathcal{G}$ on a set $X$ is called transitive if, $\forall x\in X$, the orbit $\mathcal{O}(x):=\{gx\in X,g\in\mathcal{G}\}=X$.

\textbf{Definition 2.}
A compact manifold $\mathcal{M}$ is said to be homogeneous if there is a compact Lie group
$G$ which acts on $\mathcal{M}$ transitively and differentiably; that is, for each $g\in G$, the map
$x\mapsto gx$ is differentiable in $\mathcal{M}$.

The action of $G\times\textbf{T}^n$ on any $p\in\mathcal{M}$ induces a diffeomorphism $\mathcal{M}\leftrightarrow(G\times\textbf{T}^n)/N$, where $N:=N_1\mathcal{G}_p$, $N_1$ is th finite central subgroup, $\mathcal{G}_p$ is the stabilizer of $p$, the group $\mathcal{G}=(G\times\textbf{T}^n)/N_1$. Theorem 8 shows that a biinvariant metric on $G\times\textbf{T}^n$ induces a metric on $(G\times\textbf{T}^n)/N$ and, then, on $\mathcal{M}$ (see \cite{Besse}).

By Theorem 5 (Peter-Weyl theorem), we have the spectral theory of the Laplace-Beltrami operator on a compact homogeneous space.
\begin{theorem}
The following sum decomposition holds:
\begin{eqnarray*}
\textbf{L}^2(\mathcal{M})=\widehat{\bigoplus}_{j\in\Lambda_{\mathcal{M}}}\mathcal{N}_j.
\end{eqnarray*}
A basis for $\mathcal{N}_j\subset\mathcal{M}_j$ is, up to a reordering of the index $\sigma$,
\begin{eqnarray*}
\textbf{e}_{j,\sigma}(x)=\textbf{e}_{j_1,\sigma}(x_1)e^{ij_2\cdot x_2},~~\sigma=1,\cdots,d'_j,
\end{eqnarray*}
for some $1\leq d'_j\leq d_j$, where the subspace of functions $\mathcal{N}_j\subset\mathcal{M}_j:=\mathcal{M}_{j_1}e^{ij_2\cdot x_2}$ defined by
\begin{eqnarray*}
\mathcal{N}_j:=Span\{(\rho_{v_j}(x)w_k,v_l),k=1,\cdots,\dim(W),l=1,\cdots,\dim(V)\},
\end{eqnarray*}
the subspace $W_j:=\{w\in V_j|\rho_{V_j}(g)w=w,\forall g\in H\}\subset V_j$, $(v_l)_{l=1,\cdots,\dim(V_j)}$ and $(w_l)_{l=1,\cdots,\dim(W_j)}$ are a basis of $V_j$ and $W_j$, respectively. Moreover, each $\mathcal{N}_j$ is an eigenspace of the Laplacian with dimension $\dim\mathcal{N}_j\leq\dim\mathcal{M}_j$ and $\dim\mathcal{N}_j=\dim V_j\dim W_j$.
\end{theorem}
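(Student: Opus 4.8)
The plan is to obtain the decomposition by intersecting the Peter--Weyl decomposition of $\textbf{L}^2(\mathcal{G})$, $\mathcal{G}=(G\times\textbf{T}^n)/N_1$, with the subspace of functions invariant under the stabilizer $H=\mathcal{G}_p$, using the identification of Theorem 8 (recall $N=N_1\mathcal{G}_p$ and $\mathcal{M}\cong\mathcal{G}/H$). First I would apply Theorem 5 together with Fubini's theorem for the product $G\times\textbf{T}^n$, retaining only the irreducible representations that are trivial on the finite central subgroup $N_1$, to write
\begin{eqnarray*}
\textbf{L}^2(\mathcal{G})=\widehat{\bigoplus}_{j=(j_1,j_2)}\mathcal{M}_j,\qquad \mathcal{M}_j:=\mathcal{M}_{j_1}e^{ij_2\cdot x_2},
\end{eqnarray*}
where, by Theorem 7 and the splitting of the Laplacian on a product, each $\mathcal{M}_j$ is an eigenspace of $\triangle_{\mathcal{G}}$. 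By Theorem 8 (applied to the closed subgroup $H\subset\mathcal{G}$), $\pi^*$ identifies $\textbf{L}^2(\mathcal{M})$ isometrically with the closed subspace $\textbf{L}^2_{\mathrm{inv}}(\mathcal{G}):=\{f\in\textbf{L}^2(\mathcal{G}):f(xg)=f(x),\ \forall g\in H\}$, and intertwines $\triangle_{\mathcal{M}}$ with the restriction of $\triangle_{\mathcal{G}}$.

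Second I would show that this decomposition descends to $\textbf{L}^2_{\mathrm{inv}}(\mathcal{G})$. The right translations $R_g f(x):=f(xg)$, $g\in H$, form a unitary representation of the compact group $H$ on $\textbf{L}^2(\mathcal{G})$ that leaves each $\mathcal{M}_j$ invariant (matrix coefficients of a fixed representation are stable under right translation), hence commutes with the orthogonal projectors $\Pi_{\mathcal{M}_j}$. Consequently the averaging projector $P:=\int_H R_g\,d\mu_H(g)$ is the orthogonal projection onto $\textbf{L}^2_{\mathrm{inv}}(\mathcal{G})$, commutes with every $\Pi_{\mathcal{M}_j}$, and satisfies $P(\mathcal{M}_j)=\mathcal{M}_j\cap\textbf{L}^2_{\mathrm{inv}}(\mathcal{G})=:\mathcal{N}_j$; taking images term by term gives
\begin{eqnarray*}
\textbf{L}^2(\mathcal{M})\cong\textbf{L}^2_{\mathrm{inv}}(\mathcal{G})=\widehat{\bigoplus}_{j}\mathcal{N}_j,\qquad \mathcal{N}_j\subset\mathcal{M}_j.
\end{eqnarray*}
Since $\triangle_{\mathcal{G}}$ preserves $\textbf{L}^2_{\mathrm{inv}}(\mathcal{G})$ and each $\mathcal{M}_j$ is an eigenspace, every nonzero $\mathcal{N}_j$ is an eigenspace of $\triangle_{\mathcal{M}}$ with the same eigenvalue, and trivially $\dim\mathcal{N}_j\leq\dim\mathcal{M}_j=d_j$. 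Setting $\Lambda_{\mathcal{M}}:=\{j:\mathcal{N}_j\neq\{0\}\}$ produces the asserted sum.

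Third I would identify $\mathcal{N}_j$ and compute its dimension by a matrix-coefficient computation. A general element of $\mathcal{M}_j$ is a linear combination of functions $x\mapsto(\rho_{V_j}(x)v,v')$ with $v,v'\in V_j$ (the torus character being absorbed into $V_j$ as a representation of $\mathcal{G}$); the invariance condition $(\rho_{V_j}(xg)v,v')=(\rho_{V_j}(x)v,v')$ for all $x$ and all $g\in H$ is equivalent to $\rho_{V_j}(g)v=v$, i.e. $v\in W_j:=\{w\in V_j:\rho_{V_j}(g)w=w,\ \forall g\in H\}$, while $v'$ stays free. Hence $\mathcal{N}_j=\mathrm{Span}\{x\mapsto(\rho_{V_j}(x)w_k,v_l):k=1,\cdots,\dim W_j,\ l=1,\cdots,\dim V_j\}$, and by the Schur orthogonality relations quoted after Theorem 5 these functions, normalized by $(\dim V_j)^{1/2}$, are $\textbf{L}^2$-orthonormal; thus $\dim\mathcal{N}_j=\dim V_j\,\dim W_j=:d'_j\leq d_j$, and after reordering $\sigma$ they furnish the orthonormal basis $\textbf{e}_{j,\sigma}(x)=\textbf{e}_{j_1,\sigma}(x_1)e^{ij_2\cdot x_2}$.

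The main obstacle is the second step: verifying that passing to $H$-invariants is compatible with the completed Hilbert direct sum, that is, that $P$ genuinely restricts to each $\mathcal{M}_j$ (so $\textbf{L}^2_{\mathrm{inv}}(\mathcal{G})$ is the closed span of the $\mathcal{N}_j$ and not merely contained in it), together with the bookkeeping of the finite central subgroup $N_1$ — one must check that the representations of $G\times\textbf{T}^n$ descending to $\mathcal{G}$ are exactly those trivial on $N_1$, and that $H$ acts on $\mathcal{G}$ so that the averaging argument applies. Once this is in place, the eigenspace assertion and the dimension count $\dim\mathcal{N}_j=\dim V_j\dim W_j$ follow formally from Theorems 5, 7, 8 and the Schur orthogonality relations.
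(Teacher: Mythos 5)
Your proposal is correct and follows exactly the route this statement rests on: the paper itself states Theorem 9 without proof, importing it from Berti--Procesi \cite{Berti1}, where the argument is precisely your combination of the Peter--Weyl decomposition of $\textbf{L}^2(G\times\textbf{T}^n)$, the identification of $\textbf{L}^2(\mathcal{M})$ with the right-$H$-invariant functions via Theorem 8, averaging over $H$ to descend the decomposition, and Schur orthogonality to identify $\mathcal{N}_j$ as the span of the coefficients $(\rho_{V_j}(x)w_k,v_l)$ with $w_k\in W_j$ and to get $\dim\mathcal{N}_j=\dim V_j\dim W_j$. The only point worth tightening is the invariance computation: for a general element $tr(A\rho_{V_j}(x))$ of $\mathcal{M}_j$ one should argue via $\rho_{V_j}(g)A=A$ (equivalently, via your averaging projector acting slot-wise), rather than on single matrix coefficients, but your projector $P$ already covers this.
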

Let $a=tr(A\rho_{V_i})\in\mathcal{M}_i$ and $b=tr(B\rho_{V_j})\in\mathcal{M}_j$
denote two eigenfunctions of the Laplace-Beltrami operator $\triangle$.
The product is given by
\begin{eqnarray*}
ab=tr(A\otimes B\rho_{V_i\otimes V_j}).
\end{eqnarray*}
Then $V_i\otimes V_j$ can be expressed as the direct sum of irreducible representations
\begin{eqnarray}\label{E2-2}
V_i\otimes V_j=\bigoplus_{l\in\Lambda^+}V_l^{c_{i,j}^l},~~c^l_{i,j}\in\textbf{Z}\cup\{0\},
\end{eqnarray}
where $c_{i,j}^l$ are called the Clebsch-Gordan coefficients of the group.

Using a theorem of Cartan (see \cite{Pro}, p.345), one can verify that the product of two eigenfunctions is a finite sum of eigenfunctions.
\begin{lemma}
Let $a\in\mathcal{M}_i$ and $b\in\mathcal{M}_j$. Then $ab\in\bigoplus_{l\leq i+j}\mathcal{M}_l$.
\end{lemma}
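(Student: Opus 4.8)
The plan is to prove Lemma 12 by reducing the statement about eigenfunctions to the combinatorics of the Clebsch--Gordan decomposition (\ref{E2-2}) and the partial order of Definition 1. First I would note that it suffices to treat the case of matrix-coefficient eigenfunctions $a=tr(A\rho_{V_i})\in\mathcal{M}_i$ and $b=tr(B\rho_{V_j})\in\mathcal{M}_j$, since these span $\mathcal{M}_i$ and $\mathcal{M}_j$ respectively and the claim is linear in each factor. For such $a,b$ the product formula $ab=tr(A\otimes B\,\rho_{V_i\otimes V_j})$ together with (\ref{E2-2}) gives $ab=\sum_{l\in\Lambda^+}c_{i,j}^l\,(\text{component in }\mathcal{M}_l)$, so that $ab\in\bigoplus_{l:\,c_{i,j}^l\neq0}\mathcal{M}_l$. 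Thus the lemma is equivalent to the purely representation-theoretic assertion that $c_{i,j}^l\neq0$ forces $l\leq i+j$, i.e. $i+j-l\in\mathcal{R}^+$ (a nonnegative integer combination of the positive simple roots).

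Next I would invoke the theorem of Cartan cited in the excerpt (\cite{Pro}, p.345): the irreducible constituents $V_l$ appearing in $V_i\otimes V_j$ have highest weights $l$ of the form $l = i + \nu$, where $\nu$ ranges over the weights of $V_j$; and every weight $\nu$ of the irreducible representation $V_j$ with highest weight $j$ satisfies $j-\nu\in\mathcal{R}^+$. Combining these, $i+j-l=j-\nu\in\mathcal{R}^+$, which is exactly $l\leq i+j$ in the sense of Definition 1. Hence only those $\mathcal{M}_l$ with $l\leq i+j$ can occur in the expansion of $ab$, proving the lemma. I would also remark that the sum is finite because $\{l\in\Lambda^+ : l\leq i+j\}$ is a finite set: writing $i+j-l=\sum n_s\alpha_s$ with $n_s\in\textbf{N}$, pairing with the fundamental coweights bounds each $n_s$, so there are only finitely many admissible $l$.

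The one genuinely substantive point — the ``hard part'' in the sense that it is where all the real content sits — is the weight-theoretic input: that weights of $V_j$ lie below the highest weight $j$ in the root order, and that tensor-product constituents are obtained by adding such weights to $i$. This is classical highest-weight theory (PBW/Cartan), so in the write-up I would simply cite it rather than reprove it; the remaining steps are bookkeeping. One should be slightly careful that $i+j-l$ a priori lies in the root lattice, and the positivity (membership in $\mathcal{R}^+$ rather than merely the $\textbf{Z}$-span of the $\alpha_s$) is precisely what Cartan's theorem supplies; I would flag this so the reader sees why the partial order, and not just the lattice, is the right object. Finally, extending from matrix coefficients to arbitrary $a\in\mathcal{M}_i$, $b\in\mathcal{M}_j$ is immediate by bilinearity and finiteness of $\dim\mathcal{M}_i$, $\dim\mathcal{M}_j$, and the decomposition $ab\in\bigoplus_{l\leq i+j}\mathcal{M}_l$ is independent of the chosen representatives $A,B$.
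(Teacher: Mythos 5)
Your argument is correct and follows essentially the same route the paper intends: the paper offers no written proof, merely invoking the theorem of Cartan from \cite{Pro} together with the Clebsch--Gordan decomposition (\ref{E2-2}), and your reduction to matrix coefficients plus the highest-weight fact that constituents of $V_i\otimes V_j$ have highest weights $l=i+\nu$ with $j-\nu\in\mathcal{R}^+$ is precisely the content of that citation. Your additional remarks on finiteness and on why membership in $\mathcal{R}^+$ (not just the root lattice) is the essential point are accurate and fill in details the paper leaves implicit.
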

The $\textbf{L}^2$-orthogonal projection of $ab$ on the eigenspace $\mathcal{M}_l$ is
\begin{eqnarray*}
\prod_{\mathcal{M}_l}ab=\sum_{s\leq c_{ij}^l}tr((A\otimes B)|_{l,s}\rho_{V_l}),
\end{eqnarray*}
where $(A\otimes B)|_{l,s}$ denotes the restriction of $A\otimes B$ to the $s$th copy of $V_l$ in (\ref{E2-2}).

Specially, if $A=B=Id$, we obtain the formula for the characters $\chi_i:=\chi_{V_i}$, namely,
\begin{eqnarray*}
\chi_i\chi_j=\sum_{l\leq i+j}c_{ij}^l\chi_l.
\end{eqnarray*}

\begin{lemma}
For $s\geq s_0>\frac{\dim(\mathcal{M})}{2}$, $\forall u_1,u_2\in\textbf{H}_s$, there hold:
\begin{eqnarray*}
&&\|u_i\|_{\textbf{L}^{\infty}}\leq c(s)\|u_i\|_s,~~i=1,2,\\
&&\|u_1u_2\|_s\leq c(s)\|u_1\|_s\|u_2\|_s,\\
&&\|u_1u_2\|_s\leq c(s,s_0)(\|u_1\|_s\|u_2\|_{s_0}+\|u_1\|_{s_0}\|u_2\|_{s_1}).
\end{eqnarray*}
\end{lemma}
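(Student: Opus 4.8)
The three estimates are, in order, the Sobolev embedding $\textbf{H}_s\hookrightarrow\textbf{L}^{\infty}$, the algebra property of $\textbf{H}_s$, and the bilinear tame (Moser) estimate --- in the last display ``$\|u_2\|_{s_1}$'' should read ``$\|u_2\|_s$''. The plan is to run everything through the Peter--Weyl expansion $u=\sum_{j\in\Lambda_{\mathcal{M}}}\sum_{\sigma}u_{j,\sigma}\textbf{e}_{j,\sigma}$ of Theorem 9, using only two quantitative inputs: the degeneracy bound $d_j=\dim\mathcal{N}_j\le\dim\mathcal{M}_j\le|j+\overrightarrow{\rho}|^{\dim(G)-r}$ (Theorems 7 and 9), and the lattice count $\#\{j\in\Lambda^+\times\textbf{Z}^n:|j+\overrightarrow{\rho}|\le R\}\le c\,R^{r+n}$. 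Writing $\mu:=\dim(G)-r\ge0$ and recalling $\dim\mathcal{M}=\dim(G)+n$, the decisive arithmetic is $2s_0-\mu>r+n\iff s_0>\frac{1}{2}\dim\mathcal{M}$, which is exactly the hypothesis. (For $s\in\textbf{N}$ one may instead invoke the stated equivalence $\textbf{H}_s\simeq H_s$ and quote the classical embedding and the classical Moser/Kato--Ponce estimates on the compact manifold $\mathcal{M}$, passing to general $s$ by interpolation on the scale $\{\textbf{H}_s\}$; I will sketch the intrinsic argument instead.)

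\textbf{Step 1: the $\textbf{L}^{\infty}$ bound.} Put $v_j:=\Pi_{\mathcal{N}_j}u$. Since the $\textbf{e}_{j,\sigma}$ are a sub-collection of the functions $(\dim V_{j_1})^{1/2}U^{V_{j_1}}_{l,k}(x_1)e^{ij_2\cdot x_2}$ with $U^{V_{j_1}}(x_1)$ unitary, one has $\sum_{l,k}|U^{V_{j_1}}_{l,k}(x_1)|^2=\|U^{V_{j_1}}(x_1)\|_{HS}^2=\dim V_{j_1}$, hence $\sum_{\sigma}|\textbf{e}_{j,\sigma}(x)|^2\le(\dim V_{j_1})^2=d_{j_1}\le|j+\overrightarrow{\rho}|^{\mu}$ for every $x$. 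Cauchy--Schwarz in $\sigma$ gives the per-level bound $\|v_j\|_{\textbf{L}^{\infty}}\le c\,|j+\overrightarrow{\rho}|^{\mu/2}\|v_j\|_{\textbf{L}^2}$. Summing over $j$ and using Cauchy--Schwarz in $j$ against the weight $w_j$ defining $\|\cdot\|_s$,
\begin{eqnarray*}
\|u\|_{\textbf{L}^{\infty}}\ \le\ \sum_{j}\|v_j\|_{\textbf{L}^{\infty}}\ \le\ c\Big(\sum_{j}|j+\overrightarrow{\rho}|^{\mu}\,w_j^{-2}\Big)^{1/2}\|u\|_s,
\end{eqnarray*}
and the residual series is finite --- automatically for the exponential weight $w_j=e^{s|j+\overrightarrow{\rho}|}$, and, for a polynomial weight $w_j=|j+\overrightarrow{\rho}|^{s}$, precisely because $s\ge s_0>\frac{1}{2}\dim\mathcal{M}$ together with the lattice count. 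This gives the first inequality.

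\textbf{Step 2: sub-additivity of the weight along products.} I claim that if $\Pi_{\mathcal{N}_l}(u_{1,i}u_{2,j})\ne0$, then $|l+\overrightarrow{\rho}|$, $|i+\overrightarrow{\rho}|$, $|j+\overrightarrow{\rho}|$ satisfy the three triangle inequalities. Applying Lemma 3 to the products $u_{1,i}u_{2,j}$, $\overline{u_{1,i}}\,w$ and $\overline{u_{2,j}}\,w$ (for $w\in\mathcal{N}_l$), the $G$-parts obey $l_1\preceq i_1+j_1$, $j_1\preceq i_1^{*}+l_1$ and $i_1\preceq j_1^{*}+l_1$, where $i^{*}=-w_0i$ is the dual weight and $|i^{*}+\rho|=|i+\rho|$ (the map $i\mapsto i^{*}$ being a Weyl isometry fixing $\rho$), while the torus parts obey $l_2=i_2+j_2$. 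Using the elementary monotonicity ``$\lambda,\nu$ dominant with $\nu-\lambda\succeq0$ implies $|\lambda|\le|\nu|$'', which follows from $|\nu|^2-|\lambda|^2=(\nu-\lambda,\nu+\lambda)\ge0$, one gets e.g. $|l_1+\rho|\le|i_1+j_1+\rho|\le|i_1+\rho|+|j_1|\le|i_1+\rho|+|j_1+\rho|$ (last step by $\rho\succeq0$); recombining the $G$- and torus-parts by one Cauchy--Schwarz yields $|l+\overrightarrow{\rho}|\le|i+\overrightarrow{\rho}|+|j+\overrightarrow{\rho}|$, and the other two inequalities the same way. These relations moreover confine the summation: for fixed $l$ and one of the factor-indices, the third ranges over a set whose cardinality is polynomially bounded in the magnitudes involved (Lemma 3 and Theorem 7). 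This is the $c=1$ sharpening of Lemma 1 that the algebra estimate needs.

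\textbf{Step 3: the algebra and tame estimates, and the obstacle.} Fix $l$ and write $(u_1u_2)_l=\sum_{(i,j)\in S_l}\Pi_{\mathcal{N}_l}(u_{1,i}u_{2,j})$ with $S_l:=\{(i,j):\mathcal{N}_l\subset\mathcal{N}_i\cdot\mathcal{N}_j\}$. I would bound $\|u_{1,i}u_{2,j}\|_{\textbf{L}^2}\le\min\{\|u_{1,i}\|_{\textbf{L}^{\infty}}\|u_{2,j}\|_{\textbf{L}^2},\ \|u_{1,i}\|_{\textbf{L}^2}\|u_{2,j}\|_{\textbf{L}^{\infty}}\}$, insert the per-level bound of Step 1, multiply by $w_l$ and use Step 2 to pass $w_l$ onto the two factors (for the exponential weight $w_l\le w_iw_j$; for a polynomial weight $w_l\le c_s(w_i+w_j)$), and then in the regions $|i+\overrightarrow{\rho}|\le|j+\overrightarrow{\rho}|$ and $|i+\overrightarrow{\rho}|\ge|j+\overrightarrow{\rho}|$ keep the weight attached to the larger-index mode while absorbing the degeneracy gain $|\cdot|^{\mu/2}$ of the smaller-index mode into its $s_0$-norm. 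Summing in $(i,j)\in S_l$ and then in $l$ by Cauchy--Schwarz, the confinement and polynomial counting of Step 2 together with $s_0>\frac{1}{2}\dim\mathcal{M}$ make every residual lattice sum converge, and one obtains $\|u_1u_2\|_s\le c(s,s_0)(\|u_1\|_s\|u_2\|_{s_0}+\|u_1\|_{s_0}\|u_2\|_s)$; the plain algebra bound $\|u_1u_2\|_s\le c(s)\|u_1\|_s\|u_2\|_s$ then follows from $\|\cdot\|_{s_0}\le\|\cdot\|_s$. The main obstacle will be precisely this bilinear bookkeeping: for a fixed output level $l$ the set $S_l$ is infinite (one may have $|i+\overrightarrow{\rho}|\approx|j+\overrightarrow{\rho}|\to\infty$), so no naive summation survives --- one must keep each weight paired with the mode carrying it and spend only $s_0$ derivatives (finitely many, by the threshold) to absorb the complementary mode's degeneracy. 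This is the single place where the representation theory (Lemma 3, its dual form, and the norm-monotonicity behind Lemma 1) is genuinely used; Steps 1 and 2 are soft.
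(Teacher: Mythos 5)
The paper itself offers no proof of this lemma: it is stated among the preliminaries recalled from \cite{Berti1}, so there is no argument of the authors' to compare yours against line by line. Your reconstruction is essentially the standard proof from that source, and its skeleton is sound: the per-level bound $\|\Pi_{\mathcal{N}_j}u\|_{\textbf{L}^{\infty}}\leq c\,|j+\overrightarrow{\rho}|^{\mu/2}\|\Pi_{\mathcal{N}_j}u\|_{\textbf{L}^2}$ obtained from unitarity of $U^{V_{j_1}}$ and the degeneracy bound, the lattice count over $\Lambda^+\times\textbf{Z}^n$, and the three triangle inequalities for $|l+\overrightarrow{\rho}|,|i+\overrightarrow{\rho}|,|j+\overrightarrow{\rho}|$ deduced from Lemma 3, its dual form $i^*=-w_0i$, and the monotonicity $(\nu-\lambda,\nu+\lambda+2\rho)\geq0$ are exactly the right ingredients; your correction of the typo $\|u_2\|_{s_1}\mapsto\|u_2\|_s$ is also right.

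Two caveats, both traceable to imprecisions in the paper's statement rather than to a flaw in your strategy, but you gloss over them. First, the threshold: your counting is over the cone in $\textbf{R}^{r+n}$ and the degeneracy costs $|j+\overrightarrow{\rho}|^{\mu}$ with $\mu=\dim(G)-r$, so what your argument actually requires is $2s_0>\dim(G)+n=\dim(G\times\textbf{T}^n)$. This equals $\dim(\mathcal{M})$ only when $N$ is finite; for a general homogeneous space your threshold is strictly larger than the one in the statement (the paper's own Lemma 6 likewise uses $2s>d+n+1$). To get $\dim(\mathcal{M})/2$ as literally stated you must route through the equivalence with the classical spaces $H_s(\mathcal{M})$, which you mention only as an aside. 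Second, and more seriously, the weight: with the exponential weight $e^{2|j+\overrightarrow{\rho}|s}$ as the paper literally defines $\|\cdot\|_s$, the embedding and the algebra property survive, but the tame (third) inequality is \emph{false} --- take $u_1=u_2$ a single mode at level $|j+\overrightarrow{\rho}|\approx N$, so that the left side grows like $e^{2Ns}$ while the right side grows like $e^{N(s+s_0)}$. Your parenthetical ``for the exponential weight $w_l\leq w_iw_j$'' followed by ``keep the weight attached to the larger-index mode while absorbing the smaller into its $s_0$-norm'' silently performs the splitting $w_l\leq c(s)\bigl(w_i^{(s)}w_j^{(s_0)}+w_i^{(s_0)}w_j^{(s)}\bigr)$, which only a polynomial weight $|j+\overrightarrow{\rho}|^{2s}$ admits. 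The lemma, and every later use of tame estimates in the paper, presupposes the polynomial weight of \cite{Berti1}; your write-up should commit to it rather than hedge between the two.
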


\begin{lemma}
We define $J:=\Lambda\times\textbf{Z}^n$ and $J^+:=\Lambda^+\times\textbf{Z}^n$, for given $a\in\mathcal{N}_j$ and $b\in\mathcal{N}_{j'}$. Then
\begin{eqnarray*}
ab\in\bigoplus_{\bar{j}\in D(j,j')}\mathcal{N}_{\bar{j}},
\end{eqnarray*}
where for $j=(j_1,j_2)$, $j'=(j'_1,j'_2)$, $\tilde{j}=(\tilde{j}_1,\tilde{j}_2)$,
\begin{eqnarray*}
D(j,j'):=\{\tilde{j}\in J^+|\tilde{j}_1\leq j_1+j'_1,\tilde{j}_2=j_2+j'_2\},
\end{eqnarray*}
\end{lemma}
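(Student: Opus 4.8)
The plan is to deduce Lemma~12 from the already–established product structure on the eigenspaces $\mathcal{M}_j$ of the Laplace--Beltrami operator on $G\times\textbf{T}^n$, namely Lemma~9, together with the description of $\mathcal{N}_j$ as the $N$-invariant (equivalently $H$-invariant) subspace of $\mathcal{M}_j$ provided by Theorem~9 and the decomposition $\textbf{L}^2(\mathcal{M})=\widehat{\bigoplus}_{j\in\Lambda_{\mathcal{M}}}\mathcal{N}_j$. The point is that multiplication of functions on $\mathcal{M}$ is just the restriction of multiplication of functions on $G\times\textbf{T}^n$, and the class of $N$-invariant functions is closed under products; so the product of an element of $\mathcal{N}_j$ and an element of $\mathcal{N}_{j'}$ is automatically $N$-invariant, hence lies in $\textbf{L}^2(\mathcal{M})$, and we only need to control which spectral blocks $\mathcal{N}_{\tilde\jmath}$ can appear.

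First I would write $a=a(x_1,x_2)$ with $a\in\mathcal{N}_j\subset\mathcal{M}_{j_1}e^{ij_2\cdot x_2}$ and $b\in\mathcal{N}_{j'}\subset\mathcal{M}_{j'_1}e^{ij'_2\cdot x_2}$, so that $a=\alpha(x_1)e^{ij_2\cdot x_2}$, $b=\beta(x_1)e^{ij'_2\cdot x_2}$ with $\alpha\in\mathcal{M}_{j_1}$, $\beta\in\mathcal{M}_{j'_1}$. Then $ab=\big(\alpha\beta\big)(x_1)\,e^{i(j_2+j'_2)\cdot x_2}$. The torus factor is handled trivially: the product of the characters $e^{ij_2\cdot x_2}$ and $e^{ij'_2\cdot x_2}$ is the character of frequency $j_2+j'_2$, which pins down the second component $\tilde\jmath_2=j_2+j'_2$. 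For the $G$-factor, Lemma~9 gives $\alpha\beta\in\bigoplus_{l\le j_1+j'_1}\mathcal{M}_l$, where $l\le j_1+j'_1$ means $j_1+j'_1-l\in\mathcal{R}^+$ in the sense of Definition~1. Hence $ab\in\bigoplus_{l\le j_1+j'_1}\mathcal{M}_l\,e^{i(j_2+j'_2)\cdot x_2}=\bigoplus_{\tilde\jmath_1\le j_1+j'_1}\mathcal{M}_{(\tilde\jmath_1,j_2+j'_2)}$ as a function on $G\times\textbf{T}^n$.

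The remaining step, and the one requiring a little care, is to pass from the coarse blocks $\mathcal{M}_{\tilde\jmath}$ on $G\times\textbf{T}^n$ to the blocks $\mathcal{N}_{\tilde\jmath}$ on $\mathcal{M}$. Since $a$ and $b$ are $N$-invariant (they are pulled back from $\mathcal{M}$), so is $ab$; therefore $ab\in\textbf{L}^2(\mathcal{M})=\widehat\bigoplus_{\tilde\jmath\in\Lambda_{\mathcal{M}}}\mathcal{N}_{\tilde\jmath}$. Projecting the already-obtained expansion $ab=\sum_{\tilde\jmath_1\le j_1+j'_1}\Pi_{\mathcal{M}_{(\tilde\jmath_1,j_2+j'_2)}}(ab)$ onto $\textbf{L}^2(\mathcal{M})$ and using that $\mathcal{N}_{\tilde\jmath}$ is exactly the $N$-invariant part of $\mathcal{M}_{\tilde\jmath}$, we get that $\Pi_{\mathcal{N}_{\tilde\jmath}}(ab)$ can be nonzero only when $\mathcal{N}_{\tilde\jmath}\neq\{0\}$, i.e. $\tilde\jmath\in\Lambda_{\mathcal{M}}$, and only for those $\tilde\jmath=(\tilde\jmath_1,\tilde\jmath_2)$ with $\tilde\jmath_1\le j_1+j'_1$ and $\tilde\jmath_2=j_2+j'_2$. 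That is precisely the index set $D(j,j')$, and the claim $ab\in\bigoplus_{\tilde\jmath\in D(j,j')}\mathcal{N}_{\tilde\jmath}$ follows. I expect the main (mild) obstacle to be bookkeeping: making sure the partial ordering $\tilde\jmath_1\le j_1+j'_1$ transfers correctly through Lemma~9 and that $N$-invariance genuinely commutes with the spectral projectors $\Pi_{\mathcal{M}_{\tilde\jmath}}$ — both of which are immediate from the fact that $N$ acts by isometries commuting with $\triangle$, so that each $\mathcal{M}_{\tilde\jmath}$ is $N$-invariant and $\Pi_{\mathcal{M}_{\tilde\jmath}}$ preserves the subspace of $N$-invariant functions, whose intersection with $\mathcal{M}_{\tilde\jmath}$ is $\mathcal{N}_{\tilde\jmath}$.
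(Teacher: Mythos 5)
Your proof is correct: splitting off the torus character to fix $\tilde{\jmath}_2=j_2+j_2'$, applying the Cartan/Clebsch--Gordan product lemma for the eigenspaces $\mathcal{M}_{l}$ on the $G$-factor to restrict $\tilde{\jmath}_1\leq j_1+j_1'$, and then using $N$-invariance of $ab$ together with $\mathcal{N}_{\tilde{\jmath}}\subset\mathcal{M}_{\tilde{\jmath}}$ and the orthogonality of the $\mathcal{M}_{\tilde{\jmath}}$ to conclude that only the blocks $\mathcal{N}_{\tilde{\jmath}}$ with $\tilde{\jmath}\in D(j,j')$ survive. The paper itself states this lemma without proof (it is among the facts imported from Berti--Procesi), and your argument is essentially the standard one used there, so there is nothing to correct.
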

Here for given $j=(j_1,j_2),~j'=(j_1',j_2')\in J$, we say that $j\geq j'$ if $j_1\geq j_1'$ and $|j_2|\geq|j_2'|$.

The following result shows that the embedding $\textbf{H}_s\hookrightarrow\textbf{C}(\mathcal{M})$ for $2s>\dim(G\times\textbf{T}^n)>\dim(\mathcal{M})$. We denote $J^+:=\Lambda_{\mathcal{M}}$. By a small modification of the proof of Lemma 2.15 in \cite{Berti1}, we have
\begin{lemma}
Let $2s>d+n+1$. For $u\in\bigoplus_{j\geq j_0,j\in J^+}\mathcal{N}_j$ with $j_0=(j_{01},j_{02})\in\Lambda^+(G)\times\textbf{Z}^n$, and $(\rho,j_{01})\geq0$, we have
\begin{eqnarray*}
\|u\|_{\textbf{L}^{\infty}}\leq c(s)\|u\|_se^{-(s-\frac{d+n+1}{2})|j_0|}.
\end{eqnarray*}
\end{lemma}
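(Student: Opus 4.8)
The plan is to decouple the estimate into single spectral blocks, bound each block by the classical unitary (Peter--Weyl) argument, and then convert the support condition $j\geq j_0$ into an exponential gain by showing that every frequency occurring in $u$ has $|j+\overrightarrow{\rho}|\geq|j_0|$.

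First I would write $u=\sum_{j\geq j_0,\,j\in J^+}u_j$ with $u_j=\sum_{\sigma=1}^{d'_j}u_{j,\sigma}\textbf{e}_{j,\sigma}\in\mathcal{N}_j$, so that $\|u\|_{\textbf{L}^\infty}\leq\sum_{j\geq j_0}\|u_j\|_{\textbf{L}^\infty}$. For a fixed block, $\textbf{e}_{j,\sigma}(x)=\textbf{e}_{j_1,\sigma}(x_1)e^{ij_2\cdot x_2}$ (Theorem 9) and the matrices $U_{j_1}(x_1)$ are unitary (Theorem 5), so $\|\textbf{e}_{j,\sigma}\|_{\textbf{L}^\infty}\leq(\dim V_{j_1})^{1/2}$; combining this with $d'_j\leq\dim\mathcal{M}_{j_1}=(\dim V_{j_1})^2$ and the degeneracy bound $\dim\mathcal{M}_{j_1}\leq|j_1+\rho|^{\dim(G)-r}$ of Theorem 7, a Cauchy--Schwarz over $\sigma$ together with $|j_1+\rho|\leq|j+\overrightarrow{\rho}|$ yields
\[
\|u_j\|_{\textbf{L}^\infty}\leq c\,|j+\overrightarrow{\rho}|^{N_0}\,\|u_j\|_{\textbf{L}^2},\qquad N_0=N_0(\mathcal{M}).
\]

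Next, writing $\|u_j\|_{\textbf{L}^2}=e^{-|j+\overrightarrow{\rho}|s}\big(e^{|j+\overrightarrow{\rho}|s}\|u_j\|_{\textbf{L}^2}\big)$ and applying Cauchy--Schwarz in $j$,
\[
\|u\|_{\textbf{L}^\infty}\leq c\Big(\sum_{j\geq j_0,\,j\in J^+}|j+\overrightarrow{\rho}|^{2N_0}e^{-2|j+\overrightarrow{\rho}|s}\Big)^{1/2}\|u\|_s .
\]
The crucial monotonicity step is: if $j\geq j_0$, i.e. $j_1=j_{01}+\alpha$ for some $\alpha\in\mathcal{R}^+$ and $|j_2|\geq|j_{02}|$, then $|j_1+\rho|^2=|j_{01}+\rho|^2+2(j_{01}+\rho,\alpha)+|\alpha|^2\geq|j_{01}+\rho|^2$, because $j_{01}+\rho$ is a dominant weight and the pairing of a dominant weight with any $\alpha\in\mathcal{R}^+$ is nonnegative (by the normalization $(w_i,\alpha_l)=\tfrac12\delta_{i,l}|\alpha_l|^2$); hence $|j+\overrightarrow{\rho}|^2\geq|j_{01}+\rho|^2+|j_{02}|^2=|j_0+\overrightarrow{\rho}|^2\geq|j_0|^2$, the last step using $(\rho,j_{01})\geq0$. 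Splitting $e^{-2|j+\overrightarrow{\rho}|s}=e^{-2|j+\overrightarrow{\rho}|(s-\frac{d+n+1}{2})}\,e^{-(d+n+1)|j+\overrightarrow{\rho}|}$, legitimate since $2s>d+n+1$, and bounding the first factor by $e^{-(2s-d-n-1)|j_0|}$, we obtain
\[
\sum_{j\geq j_0}|j+\overrightarrow{\rho}|^{2N_0}e^{-2|j+\overrightarrow{\rho}|s}\leq e^{-(2s-d-n-1)|j_0|}\sum_{j\in J^+}|j+\overrightarrow{\rho}|^{2N_0}e^{-(d+n+1)|j+\overrightarrow{\rho}|}.
\]
The residual series is a finite constant $c(\mathcal{M})$: since $J^+\subset\Lambda^+\times\textbf{Z}^n\subset\textbf{R}^{r+n}$ with $r\leq d$, the number of $j$ with $|j+\overrightarrow{\rho}|\in[R,R+1)$ is $O(R^{d+n-1})$, so the sum is dominated by $\int_0^\infty R^{2N_0+d+n-1}e^{-(d+n+1)R}\,dR<\infty$. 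Taking square roots gives $\|u\|_{\textbf{L}^\infty}\leq c(s)\,e^{-(s-\frac{d+n+1}{2})|j_0|}\|u\|_s$.

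I expect the genuine content to be the monotonicity inequality $|j+\overrightarrow{\rho}|\geq|j_0|$ for all $j\geq j_0$: it is precisely there that the root-system structure and the hypothesis $(\rho,j_{01})\geq0$ are used, and it is what turns the ``supported on $j\geq j_0$'' hypothesis into the exponential factor $e^{-(s-\frac{d+n+1}{2})|j_0|}$. Once that is in hand, the block-wise unitary bound and the convergence of the weighted lattice sum are routine adaptations of the argument for Lemma 2.15 in \cite{Berti1}, the only change being that the exponential Sobolev weight $e^{2|j+\overrightarrow{\rho}|s}$ here replaces the polynomial weight used there, which is exactly what produces exponential rather than polynomial decay in $|j_0|$.
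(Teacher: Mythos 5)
Your proof is correct and follows essentially the argument the paper intends: the paper gives no proof of this lemma, merely citing Lemma 2.15 of Berti--Procesi, and your blockwise Peter--Weyl bound plus weighted Cauchy--Schwarz is exactly that argument adapted to the exponential weight $e^{2|j+\overrightarrow{\rho}|s}$. The monotonicity step $|j+\overrightarrow{\rho}|\geq|j_0|$ for $j\geq j_0$, which you rightly identify as the only place where the root-system structure and the hypothesis $(\rho,j_{01})\geq0$ enter, is also handled correctly.
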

Due to the orthogonal splitting
\begin{eqnarray*}
\textbf{H}_s=\bigoplus_{j\in J^+}\mathcal{N}_j,
\end{eqnarray*}
we identify a linear operator $A$ acting on $\textbf{H}_s$ with its matrix representation $A=(A_j^{j'})_{j,j'\in J^+}$ with blocks $A_j^{j'}\in\mathcal{L}(\mathcal{N}_{j'},\mathcal{N}_{j})$.

We define the polynomially localized block matrices
\begin{eqnarray*}
\mathcal{A}_s:=\{A=(A_j^{j'})_{j,j'\in J^+}:|A|^2_s:=\sup_{j\in J^+}\sum_{j'\in J^+}e^{2s|j-j'|}\|A_j^{j'}\|_0^2<\infty\},
\end{eqnarray*}
where $\|A_j^{j'}\|_0:=\sup_{u\in\mathcal{N}_{j'},\|u\|_0=1}\|A_j^{j'}u\|_0$ is the $\textbf{L}^2$-operator norm in $\mathcal{L}(\mathcal{N}_{j'},\mathcal{N}_{j})$. If $s'>s$, then these holds $\mathcal{A}_{s'}\subset\mathcal{A}_s$.

The next lemma (see \cite{Berti1}) shows the algebra property of $\mathcal{A}_s$ and interpolation inequality.
\begin{lemma}
There holds
\begin{eqnarray}\label{E2-4}
&&|AB|_s\leq c(s)|A|_s|B|_s,~~\forall A,B\in\mathcal{A}_s,~~~s>s_0>\frac{r+n+1}{2},\\
\label{E2-5}
&&|AB|_s\leq c(s)(|A|_s|B|_{s_0}+|A|_{s_0}|B|_s),~~s\geq s_0,\\
\label{E2-6}
&&\|Au\|_s\leq c(s)(|A|_s\|u\|_{s_0}+|A|_{s_0}\|u\|_s),~~\forall u\in\textbf{H}_s,~~s\geq s_0.
\end{eqnarray}
\end{lemma}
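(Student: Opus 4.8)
\emph{Proof proposal.} Since the statement is quoted from \cite{Berti1}, the plan is to reproduce that argument in the present notation; all three estimates rest on the same two structural facts. First, $|\cdot|$ being a norm on $\textbf{R}^{r+n}$, the weight $k\mapsto e^{s|k|}$ is submultiplicative, so $e^{s|j-j''|}\le e^{s|j-j'|}e^{s|j'-j''|}$ and $|j+\overrightarrow{\rho}|\le|j-j'|+|j'+\overrightarrow{\rho}|$ for all $j,j',j''\in J^+$ (in a polynomial-weight reading one uses instead $\langle j-j''\rangle^{s}\le c(s)(\langle j-j'\rangle^{s}+\langle j'-j''\rangle^{s})$, which also captures the asymmetric splitting used below). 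Second, the index set $J^+\subset\Lambda^+\times\textbf{Z}^n$ sits in $\textbf{R}^{r+n}$ with polynomial counting growth, so $\sup_{j}\sum_{j'\in J^+}e^{-\eta|j-j'|}<\infty$ for every $\eta>0$, and the companion polynomial tail $\sup_{j}\sum_{j'}\langle j-j'\rangle^{-2s_0}$ is finite precisely because $2s_0>r+n$, i.e. because $s_0>\frac{r+n+1}{2}$. Finally, from $(AB)^{j''}_{j}=\sum_{j'}A^{j'}_{j}B^{j''}_{j'}$ and submultiplicativity of the operator norm one has $\|(AB)^{j''}_{j}\|_0\le\sum_{j'}\|A^{j'}_{j}\|_0\,\|B^{j''}_{j'}\|_0$, so everything reduces to weighted discrete convolution estimates for the nonnegative scalars $\|A^{j'}_{j}\|_0,\ \|B^{j''}_{j'}\|_0$.

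For \eqref{E2-4} I would fix $j$, multiply by $e^{s|j-j''|}$ and distribute the weight,
\[
e^{s|j-j''|}\|(AB)^{j''}_{j}\|_0\le\sum_{j'}\bigl(e^{s|j-j'|}\|A^{j'}_{j}\|_0\bigr)\bigl(e^{s|j'-j''|}\|B^{j''}_{j'}\|_0\bigr),
\]
then apply Cauchy--Schwarz in the summation index $j'$, using part of the exponent (the gap $s-s_0>0$, resp. the polynomial tail) to produce a summable factor over the lattice while keeping the $\ell^2$ structure of the $A$- and $B$-blocks; squaring, summing over $j''$ with $\sum_{j''}e^{2s|j'-j''|}\|B^{j''}_{j'}\|_0^2\le|B|_s^2$, and taking $\sup_j$ would give $|AB|_s\le c(s)|A|_s|B|_s$.

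For \eqref{E2-5} I would run the same computation but split the weight asymmetrically, using that in every triple at least one of $|j-j'|,\,|j'-j''|$ carries at least half of $|j-j''|$; on the region of the $j'$-sum where $|j-j'|$ is dominant I keep the $A$-block at level $s$ and lower the $B$-block to level $s_0$ (the excess being absorbed into the summable tail via the dominance), and symmetrically on the complementary region, so that the two contributions assemble into $c(s)(|A|_s|B|_{s_0}+|A|_{s_0}|B|_s)$. Inequality \eqref{E2-6} is then the special case in which $B$ is the block column attached to $u=\sum_{j'}u_{j'}\in\textbf{H}_s$: one replaces $\|B^{j''}_{j'}\|_0$ by $\|u_{j'}\|_0$ and $e^{s|j'-j''|}$ by $e^{s|j'+\overrightarrow{\rho}|}$, uses $|j+\overrightarrow{\rho}|\le|j-j'|+|j'+\overrightarrow{\rho}|$, and partitions the $j'$-sum according to whether $|j-j'|$ or $|j'+\overrightarrow{\rho}|$ carries the bulk of $|j+\overrightarrow{\rho}|$, obtaining the two terms $|A|_s\|u\|_{s_0}$ and $|A|_{s_0}\|u\|_s$.

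The block-product bound and the convolution bookkeeping are routine. The one point that really needs care --- and the only genuine obstacle --- is organising the Cauchy--Schwarz steps so that, after all summations, exactly one free index survives over which the supremum defining $|\cdot|_s$ is taken: the ``excess'' weight used to make an $\ell^1$--$\ell^2$ passage summable over $J^+$ must be charged to the index being summed and never to the index on which one takes the supremum, and it is precisely here that the dimensional condition $2s_0>r+n$ (i.e. $s_0>\frac{r+n+1}{2}$) is consumed. Alternatively, one may simply invoke the lemma from \cite{Berti1}.
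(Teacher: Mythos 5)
The paper itself offers no proof of this lemma: it is quoted from \cite{Berti1} (``The next lemma (see \cite{Berti1}) \dots'') and the proof is omitted, so your closing option of simply invoking that reference is exactly what the paper does, and there is no argument in the paper to compare yours against. Your outline for \eqref{E2-4}--\eqref{E2-5} is the standard one and is sound in the polynomial reading of the weights: split $\langle j-j''\rangle^{s}\le c(s)(\langle j-j'\rangle^{s}+\langle j'-j''\rangle^{s})$, apply Cauchy--Schwarz in $j'$ after inserting $\langle j-j'\rangle^{\pm s_0}$ or $\langle j'-j''\rangle^{\pm s_0}$, and use $\sum_{k}\langle k\rangle^{-2s_0}<\infty$, which is exactly where $s_0>\frac{r+n+1}{2}$ is consumed. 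One caveat: what makes the $j'$-sum converge is this inserted factor at the fixed level $s_0$, not ``part of the exponent'' or the gap $s-s_0$; with the exponential weights $e^{2s|j-j'|}$ literally printed in the paper there is no summable factor available at the same index $s$ (borrowing decay in one factor forces the other above level $s$), and a matrix supported on a single lattice line with $\|A_j^{j'}\|_0=e^{-s|j-j'|}\langle j-j'\rangle^{-\beta}$, $\tfrac12<\beta\le\tfrac34$, already violates \eqref{E2-4}; so the computation has to be run with the weights $\langle j-j'\rangle^{2s}$ of \cite{Berti1}, as your ``polynomial-weight reading'' tacitly does.

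The genuine gap is in your treatment of \eqref{E2-6}. In $\|Au\|_s$ there is no supremum: both $j$ and $j'$ are summed, so your bookkeeping rule (``charge the excess weight to the summed index, never to the sup index'') does not decide anything, and after Cauchy--Schwarz the surviving quantity is the weighted \emph{column} sum $\sum_{j}\langle j-j'\rangle^{2\sigma}\|A_j^{j'}\|_0^2$ (with $\sigma=s$ or $s_0$), which the norm as printed --- supremum over the row index $j$, sum over the column index $j'$ --- does not control. Indeed \eqref{E2-6} is false under that convention: take $A$ supported on a single column $j_0'$ with $\|A_j^{j_0'}\|_0=e^{-s|j-j_0'|}$ (rank-one blocks aligned with a fixed $u_{j_0'}$); then $|A|_s=|A|_{s_0}=1$, while $\|Au\|_s^2=\sum_j e^{2s|j+\overrightarrow{\rho}|}e^{-2s|j-j_0'|}\|u_{j_0'}\|_0^2\ge e^{-2s|j_0'+\overrightarrow{\rho}|}\sum_j\|u_{j_0'}\|_0^2=\infty$, and the same example works with polynomial weights. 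To close the proof you must either take the supremum over the column index $j'$ in the definition of $|\cdot|_s$ --- the convention under which your Cauchy--Schwarz scheme does close for all three estimates, and presumably the one intended in \cite{Berti1} --- or use the self-adjointness $(A_j^{j'})^{\dagger}=A_{j'}^{j}$, which makes row and column sums interchangeable and is satisfied by every matrix to which the paper applies the lemma (the multiplication matrices of Lemmas 9--10 and the matrix $T$). Your sketch, which treats $u$ as a block column and quotes the algebra computation verbatim, silently skips this point, and it is the only place where the argument as written would fail.
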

By Lemma 7, we can get, $\forall m\in\textbf{N}$,
\begin{eqnarray}\label{E2-7}
&&|A^m|_s\leq c(s)^{m-1}|A|_s^{m},\\
\label{E2-8}
&&|A^m|_s\leq m(c(s)|A|_{s_0})^{m-1}|A|_s.
\end{eqnarray}
The next two lemmas can be obtained by a small modification of the proof of Lemma 2.18 and Proposition 2.19 in \cite{Berti1}, so we omit it.
\begin{lemma}
Let $A\in\mathcal{A}_s$, $\Omega_1,\Omega_2\subset J^+$, and $\Omega_1\cap\Omega_2=\emptyset$. Then
\begin{eqnarray*}
\|A_{\Omega_2}^{\Omega_1}\|_0\leq c(s)|A|_sd^{-1}(\Omega_1,\Omega_2)^{2s-(r+n+1)},
\end{eqnarray*}
where $r+n+1$ is the dimension of $J^+$.
\end{lemma}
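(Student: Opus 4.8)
The proof is only indicated in the source (``a small modification of the proof of Lemma 2.18 in \cite{Berti1}''), so the plan is to carry out that modification. Write $d:=d(\Omega_1,\Omega_2)$. The block $A_{\Omega_2}^{\Omega_1}$ is the operator from $\bigoplus_{j'\in\Omega_1}\mathcal{N}_{j'}$ to $\bigoplus_{j\in\Omega_2}\mathcal{N}_{j}$ whose entries are the blocks $A_j^{j'}$ with $j\in\Omega_2$, $j'\in\Omega_1$. The key structural remark is that, since $\Omega_1\cap\Omega_2=\emptyset$, \emph{every} entry occurring here satisfies $|j-j'|\ge d$; thus none of the ``diagonal part'' of $A$ is seen, and the full exponential weight $e^{2s|j-j'|}$ built into $|A|_s$ is available to produce smallness in $d$. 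Everything else is two applications of Cauchy--Schwarz together with a lattice point count in $J^+=\Lambda^+\times\textbf{Z}^n$.

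Concretely, I would run a Schur test on block norms. Fix $j\in\Omega_2$; writing $1=e^{s|j-j'|}e^{-s|j-j'|}$ and using Cauchy--Schwarz in $j'$,
\[
\sum_{j'\in\Omega_1}\|A_j^{j'}\|_0\le\Big(\sum_{j'\in J^+}e^{2s|j-j'|}\|A_j^{j'}\|_0^2\Big)^{1/2}\Big(\sum_{j'\in\Omega_1}e^{-2s|j-j'|}\Big)^{1/2}\le|A|_s\Big(\sum_{j'\in\Omega_1}e^{-2s|j-j'|}\Big)^{1/2}.
\]
Since $|j-j'|\ge d$ on $\Omega_1$, and since the number of $j'\in J^+$ with $|j-j'|\le R$ grows at most polynomially in $R$ uniformly in $j$, the last sum is $\le c(s)e^{-sd}$; hence $\sup_{j\in\Omega_2}\sum_{j'\in\Omega_1}\|A_j^{j'}\|_0\le c(s)|A|_s e^{-sd/2}$, and the same computation, read column-wise, bounds $\sup_{j'\in\Omega_1}\sum_{j\in\Omega_2}\|A_j^{j'}\|_0$ by the same quantity. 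The Schur test then yields $\|A_{\Omega_2}^{\Omega_1}\|_0\le c(s)|A|_s e^{-sd/2}$. Finally, since $2s-(r+n+1)>0$ (as $s>s_0>\frac{r+n+1}{2}$), the function $d\mapsto e^{-sd/2}d^{2s-(r+n+1)}$ is bounded on $(0,\infty)$, so $e^{-sd/2}\le c(s)\,d^{-(2s-(r+n+1))}$ and the stated bound follows. One could equally track polynomial weights throughout, using $|j-j'|^{2s}\le c(s)e^{2s|j-j'|}$ and pulling a convergent tail out of the Schur sums, which reproduces the same exponent in the style of \cite{Berti1}.

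The routine parts are the two Cauchy--Schwarz estimates and the elementary comparison $e^{-sd/2}\le c(s)d^{-(2s-(r+n+1))}$. The one point that needs genuine, if standard, care --- and hence the main obstacle --- is the uniform lattice point count: one must know that the number of points of $J^+=\Lambda^+\times\textbf{Z}^n$ in a ball of radius $R$ grows at most polynomially in $R$, independently of the centre, so that the sums $\sum_{j'\in\Omega_1}e^{-2s|j-j'|}$ and their column analogues are finite and carry the exponential factor above. This follows from Theorem 6, which realizes $\Lambda^+$ as a sub-cone of the full-rank lattice in $\textbf{R}^r$ generated by the $r$ independent fundamental weights $w_1,\dots,w_r$; hence $J^+$ lies inside a full-rank lattice of $\textbf{R}^r\times\textbf{Z}^n$, for which such counting bounds are classical, and with this in hand the argument is exactly the ``small modification'' of the proof of Lemma 2.18 in \cite{Berti1}.
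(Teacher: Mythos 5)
Your strategy is essentially the intended one (the paper gives no proof and defers to Lemma 2.18 of Berti--Procesi), but one step is not justified as written. The norm is $|A|_s^2=\sup_{j\in J^+}\sum_{j'\in J^+}e^{2s|j-j'|}\|A_j^{j'}\|_0^2$, i.e.\ a supremum over the \emph{row} index and a sum over the \emph{column} index. Your row estimate is fine, but the sentence ``the same computation, read column-wise'' does not go through: for fixed $j'$ the quantity $\sum_{j}e^{2s|j-j'|}\|A_j^{j'}\|_0^2$ is not controlled by $|A|_s$, so you cannot simply repeat the Cauchy--Schwarz step with the roles of $j$ and $j'$ exchanged. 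The column bound you need is nevertheless true and cheap: the definition gives the entrywise decay $\|A_j^{j'}\|_0\le|A|_s e^{-s|j-j'|}$, hence $\sum_{j\in\Omega_2}\|A_j^{j'}\|_0\le|A|_s\sum_{j\in\Omega_2}e^{-s|j-j'|}\le c(s)|A|_s e^{-sd/2}$ by the same lattice count, and with that substitution your Schur test closes, yielding the exponential bound $c(s)|A|_s e^{-sd/2}$ which you correctly relax to $d^{-(2s-(r+n+1))}$ using $2s>r+n+1$. Your remaining ingredients (uniform polynomial count of points of $J^+\subset\Lambda^+\times\textbf{Z}^n$ in balls via Theorem 6, and the elementary comparison of exponential versus polynomial decay) are fine.

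For comparison, the argument the paper has in mind (Berti--Procesi) avoids the issue entirely: for $u$ supported in $\Omega_1$ one bounds $\|\sum_{j'\in\Omega_1}A_j^{j'}u_{j'}\|_0$ by Cauchy--Schwarz against the weight $e^{2s|j-j'|}$, sums over $j\in\Omega_2$, exchanges the order of summation, and is left with the pure lattice sum $\sum_{j\in\Omega_2}e^{-2s|j-j'|}\le c(s)e^{-2sd+sd}$ (or, with the polynomial weights of the cited paper, $\lesssim d^{-(2s-(r+n+1))}$, which is where the exponent in the statement actually comes from). That route uses only the row sums of $A$, so no column information is ever needed; your Schur-test route is equally valid once the column sums are justified by the entrywise bound, but it buys nothing extra here.
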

Since $\textbf{H}_s$ is an algebra, for each $b\in\textbf{H}^s$ defines the multiplication operator
\begin{eqnarray}\label{E2-3}
u(x)\mapsto b(x)u(x),~~\forall u\in\textbf{H}_s,
\end{eqnarray}
which is represented by $(B_j^{j'})_{j,j'\in J^+}$ with $B_j^{j'}:=\Pi_{\mathcal{N}_j}b(x)|_{\mathcal{N}_{j'}}\in\mathcal{L}(\mathcal{N}_{j'},\mathcal{N}_j)$.

Using Lemmas 5-6, we obtain
\begin{lemma}
If $b\in\textbf{H}_s$ is real, then the matrix $(B_j^{j'})_{j,j'\in J^+}$ is self-adjoint, i.e. $(B^{j'}_j)^{\dag}=(B_{j'}^j)$, and $\forall2s\geq d+n+1$,
\begin{eqnarray*}
\|B_j^{j'}\|_0\leq c(s)\|b\|_se^{-(s-\frac{d+n+1}{2})|j-j'|}.
\end{eqnarray*}
\end{lemma}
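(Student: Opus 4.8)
The plan is to separate the claim into an algebraic half (the self-adjointness) and an analytic half (the exponential off-diagonal decay). For the first half I would observe that, since $b$ is real-valued, the multiplication operator $M_b\colon u\mapsto b(x)u(x)$ is symmetric on $\textbf{L}^2(\mathcal{M})$: for all $u,v$,
\begin{eqnarray*}
\langle M_bu,v\rangle_{\textbf{L}^2}=\int_{\mathcal{M}}b\,u\,\bar v\,d\mu=\int_{\mathcal{M}}u\,\overline{b\,v}\,d\mu=\langle u,M_bv\rangle_{\textbf{L}^2},
\end{eqnarray*}
the middle equality being just $b=\bar b$. For $2s\geq d+n+1$, Lemma 4 gives $b\in\textbf{L}^{\infty}(\mathcal{M})$, so $M_b$ is bounded and this computation is legitimate. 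Read through the orthogonal splitting $\textbf{L}^2=\bigoplus_{j\in J^+}\mathcal{N}_j$, symmetry of $M_b$ is precisely the block relation $(B_j^{j'})^{\dag}=B_{j'}^{j}$.

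For the decay estimate I would fix $j,j'\in J^+$, decompose $b=\sum_{k\in J^+}b_k$ with $b_k\in\mathcal{N}_k$, and write $B_j^{j'}=\sum_k(b_k)_j^{j'}$ with $(b_k)_j^{j'}:=\Pi_{\mathcal{N}_j}M_{b_k}|_{\mathcal{N}_{j'}}$. By Lemma 5, $b_k\,u\in\bigoplus_{\tilde j\in D(k,j')}\mathcal{N}_{\tilde j}$ for $u\in\mathcal{N}_{j'}$, so $(b_k)_j^{j'}\neq 0$ forces $j_1\leq k_1+j'_1$ and $j_2=k_2+j'_2$; computing the adjoint $((b_k)_j^{j'})^{\dag}=(\bar b_k)_{j'}^{j}$ and using $\overline{\mathcal{N}_k}=\mathcal{N}_{k^*}$ with $|k^*+\rho|=|k+\rho|$, Lemma 5 also forces $j'_1\leq k_1^*+j_1$. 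Hence the only Fourier modes entering the block $B_j^{j'}$ are the $b_k$ with
\begin{eqnarray*}
k_1=j_1-j'_1+\alpha,\qquad k_1^*=j'_1-j_1+\beta,\qquad\alpha,\beta\in\mathcal{R}^+,\qquad k_2=j_2-j'_2.
\end{eqnarray*}
Adding the first two relations gives $\alpha+\beta=k_1+k_1^*$; since $\mathcal{R}^+$ is generated by the linearly independent simple roots $\alpha_1,\dots,\alpha_r$, the simple-root coordinates of $\alpha$ are dominated by those of $k_1+k_1^*$ (because $k_1+k_1^*-\alpha=\beta\in\mathcal{R}^+$), so $|\alpha|\leq c(|k_1|+|k_1^*|)\leq 2c(|k_1|+|\rho|)$ and hence $|j_1-j'_1|\leq|k_1|+|\alpha|\leq c'(|k_1|+|\rho|)$. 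With $|j_2-j'_2|=|k_2|$ this yields, for every contributing $k$,
\begin{eqnarray*}
|k+\overrightarrow{\rho}|\geq c_0|j-j'|-C_0,
\end{eqnarray*}
with $c_0,C_0$ depending only on $\mathcal{M}$. Finally I would bound $\|(b_k)_j^{j'}\|_0\leq\|b_k\|_{\textbf{L}^{\infty}}$ and, by Lemma 6 applied with $j_0=k$ (here $(\rho,k_1)\geq 0$ because the Gram matrix of the fundamental weights is nonnegative, and $\|b_k\|_s\leq\|b\|_s$),
\begin{eqnarray*}
\|b_k\|_{\textbf{L}^{\infty}}\leq c(s)\|b_k\|_se^{-(s-\frac{d+n+1}{2})|k|}\leq c(s)\|b\|_se^{-(s-\frac{d+n+1}{2})|k|};
\end{eqnarray*}
summing over the contributing $k$ (the number of those with $|k+\overrightarrow{\rho}|\leq R$ grows only polynomially in $R$, by the degeneracy bound $d_j\leq|j+\rho|^{\dim(G)-r}$ of Theorem 7) and inserting the lower bound for $|k+\overrightarrow{\rho}|$ gives $\|B_j^{j'}\|_0\leq c(s)\|b\|_se^{-(s-\frac{d+n+1}{2})|j-j'|}$ once the polynomial factors and the additive constant are absorbed into $c(s)$; for $|j-j'|$ bounded the estimate is trivial since $\|B_j^{j'}\|_0\leq\|b\|_{\textbf{L}^{\infty}}\leq c(s)\|b\|_s$.

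The routine parts are the symmetry computation, the crude bound $\|(b_k)_j^{j'}\|_0\leq\|b_k\|_{\textbf{L}^{\infty}}$, and the summation of polynomial factors against the exponential; the loss $\frac{d+n+1}{2}$ in the exponent is simply the Sobolev embedding constant of Lemma 6. The hard part will be the geometric step, namely passing from the Clebsch--Gordan support conditions $j_1\leq k_1+j'_1$, $j'_1\leq k_1^*+j_1$, which live in the dominance order of the weight lattice and a priori carry no metric information, to the quantitative lower bound on $|k+\overrightarrow{\rho}|$ for the modes $b_k$ actually seen by $B_j^{j'}$. This is exactly where Lemma 1, the linear independence of the simple roots, the identity $|k^*+\rho|=|k+\rho|$, and the self-adjointness (without which the ``dual'' constraint involving $k^*$ is unavailable and decay in $|j-j'|$ cannot be obtained) must be combined, and it is the point at which one adapts the proofs of Lemma 2.18 and Proposition 2.19 of \cite{Berti1}.
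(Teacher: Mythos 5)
Your proof follows exactly the route the paper intends --- it states Lemma 9 with the single line ``Using Lemmas 5--6, we obtain'', i.e.\ combine the Clebsch--Gordan support constraint of Lemma 5 with the $\textbf{L}^{\infty}$ decay of Lemma 6 --- and your expansion of that one-liner, including the self-adjointness via reality of $b$ and the geometric step converting the dominance-order constraints on the contributing modes $k$ into a metric lower bound on $|k+\overrightarrow{\rho}|$, is sound. The one caveat is the multiplicative constant $c_{0}<1$ you yourself derive in $|k+\overrightarrow{\rho}|\geq c_{0}|j-j'|-C_{0}$: with the paper's exponential weights $e^{2s|j+\overrightarrow{\rho}|}$ this constant cannot be absorbed into $c(s)$ (unlike the additive $C_{0}$ and the polynomial counting factors), so the argument actually yields decay $e^{-c_{0}(s-\frac{d+n+1}{2})|j-j'|}$ on the non-abelian part; this is a defect of the lemma as stated, inherited from transplanting the polynomially weighted estimates of Berti--Procesi, rather than a gap in your reasoning.
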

We need to consider restricted matrices. Given a set of indexes $l\subset J^+$, we define
\begin{eqnarray*}
\mathcal{A}_s(l):=\{A=(A_j^{j'})_{j,j'\in J^+}:(A_j^{j'})^{\dag}=A_j^{j'},
|A|^2_s:=\sup_{j\in l}\sum_{j'\in l}e^{2s|j-j'|}\|A_j^{j'}\|_0^2<\infty\}.
\end{eqnarray*}
The next two lemmas can be seen as the corollaries of Lemma 2.9 (see \cite{Berti1}).
\begin{lemma}
For real functions $b\in\textbf{H}_{s+s'}$ with $2s'\geq d+r+2n+3$ , the matrix $(B_j^{j'})_{j,j'\in J^+}$ representing the multiplication operator (\ref{E2-3}) is self-adjoint, it belongs to the algebra of polynomially localized matrices $\mathcal{A}_s$, and we have
\begin{eqnarray*}
|B|_s\leq K(s)\|b\|_{s+s'}.
\end{eqnarray*}
\end{lemma}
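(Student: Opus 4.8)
The plan is to deduce all three assertions of the lemma from the single block estimate of Lemma 9, invoked at the index $\tilde{s}:=s+s'$ rather than at $s$. Since $b$ is real, Lemma 9 at this index already yields self-adjointness, $(B_j^{j'})^{\dag}=B_{j'}^{j}$; and by the very definition of $\mathcal{A}_s$ the inclusion $B\in\mathcal{A}_s$ follows as soon as $|B|_s<\infty$. So the whole content of the lemma is the quantitative bound $|B|_s\le K(s)\|b\|_{s+s'}$, and everything else is bookkeeping.

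First I would check that Lemma 9 is legitimately applicable at the index $\tilde{s}=s+s'$: indeed $b\in\textbf{H}_{s+s'}=\textbf{H}_{\tilde{s}}$ by hypothesis, and $2\tilde{s}\ge 2s'\ge d+r+2n+3>d+n+1$. It then gives, for all $j,j'\in J^+$, $\|B_j^{j'}\|_0\le c(\tilde{s})\,\|b\|_{\tilde{s}}\,e^{-(\tilde{s}-\frac{d+n+1}{2})|j-j'|}$. Squaring this and multiplying by the weight $e^{2s|j-j'|}$ occurring in $|B|_s^2$, the two exponents combine into $-2\mu|j-j'|$ with $\mu:=s'-\frac{d+n+1}{2}$; the standing assumption $2s'\ge d+r+2n+3$ is precisely what forces $2\mu\ge r+n+2>0$, a genuine --- and generous --- exponential decay rate. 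Hence $e^{2s|j-j'|}\|B_j^{j'}\|_0^2\le c(s+s')^2\|b\|_{s+s'}^2\,e^{-2\mu|j-j'|}$ for every pair $j,j'\in J^+$.

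Next I would sum over $j'$, uniformly in $j$. Because $J^+=\Lambda^+\times\textbf{Z}^n$ is a discrete subset of a fixed finite-dimensional Euclidean space, the number of $j'\in J^+$ with $\ell\le|j-j'|<\ell+1$ grows only polynomially in $\ell$ and is bounded independently of $j$; as $\mu>0$, this forces $\sum_{j'\in J^+}e^{-2\mu|j-j'|}\le C_1(\mathcal{M})<\infty$. Taking the supremum over $j\in J^+$ in the definition of $|B|_s^2$ now gives $|B|_s^2\le c(s+s')^2\,C_1\,\|b\|_{s+s'}^2$, i.e. $|B|_s\le K(s)\|b\|_{s+s'}$ with $K(s):=c(s+s')\sqrt{C_1}$ (depending, besides $s$, only on $\mathcal{M}$ and on the fixed choice of $s'$). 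In particular $|B|_s<\infty$, so $B\in\mathcal{A}_s$, which together with the self-adjointness already recorded completes the proof.

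I do not anticipate a serious obstacle here: the estimate is a direct corollary of Lemma 9, and the only point requiring a little care is the uniform-in-$j$ summability of the off-diagonal series --- which is exactly what the regularity margin $s'\ge\frac{d+r+2n+3}{2}$ buys, since it produces a decay rate $2\mu\ge r+n+2$ that comfortably dominates the polynomial shell counts of $J^+$. Should one wish to avoid the soft ``exponential beats polynomial'' step, an equivalent route is to split $b=\sum_k b_k$ into its $\mathcal{N}_k$-components, estimate each block $\Pi_{\mathcal{N}_j}(b_k\,\cdot\,)|_{\mathcal{N}_{j'}}$ using Lemma 9 applied to $b_k$ together with the product rule of Lemma 5, and recombine via Cauchy--Schwarz; this reaches the same conclusion with more effort.
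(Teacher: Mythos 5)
Your argument is correct and is exactly the intended one: the paper omits the proof of this lemma, deferring to Lemma 2.9 of Berti--Procesi, and that proof is precisely your computation --- apply the block-decay estimate of Lemma 9 at the regularity index $s+s'$, absorb the weight $e^{2s|j-j'|}$ into the exponential decay using $2s'\ge d+r+2n+3$ (so the residual rate $2s'-(d+n+1)\ge r+n+2>0$), and sum over $j'\in J^+$ uniformly in $j$ via the polynomial shell counts of the lattice $\Lambda\times\textbf{Z}^n$. Nothing further is needed.
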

\begin{lemma}
For $A=(A_j^{j'})_{j,j'\in J^+}\in\mathcal{A}_s$, its restriction $A_{l}=(A_j^{j'})_{j,j'\in l}\in\mathcal{A}_s(l)$ satisfies $|A_l|_s\leq|A|_s$. On the other hand, any $A\in\mathcal{A}_s(l)$ can be extended to a matrix in $\mathcal{A}_s$ by setting $A_j^{j'}=0$ for $j,j'\in l$ without changing the norm $|A|_s$.
\end{lemma}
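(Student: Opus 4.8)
The plan is to deduce both assertions straight from the definitions of the norms $|\cdot|_s$ on $\mathcal{A}_s$ and on $\mathcal{A}_s(l)$: in each case $|\cdot|_s^2$ is a supremum over rows of a sum of the \emph{nonnegative} quantities $e^{2s|j-j'|}\|A_j^{j'}\|_0^2$, so restricting the index set can only shrink the norm while padding with zero blocks leaves it unchanged. This mirrors Lemma 2.9 of \cite{Berti1} transported to the index set $J^+=\Lambda_{\mathcal{M}}$, so I anticipate no real analytic difficulty.

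For the restriction statement I would fix $A=(A_j^{j'})_{j,j'\in J^+}\in\mathcal{A}_s$ and $l\subset J^+$, put $A_l:=(A_j^{j'})_{j,j'\in l}$, and note that for every fixed $j\in l$,
\begin{eqnarray*}
\sum_{j'\in l}e^{2s|j-j'|}\|A_j^{j'}\|_0^2\leq\sum_{j'\in J^+}e^{2s|j-j'|}\|A_j^{j'}\|_0^2\leq|A|_s^2,
\end{eqnarray*}
since we only drop nonnegative summands; taking the supremum over $j\in l\subseteq J^+$ then yields $|A_l|_s\leq|A|_s$. I would also check that the self-adjointness pattern built into the definition of $\mathcal{A}_s(l)$, which only refers to blocks indexed by pairs in $l$, is inherited from that of $A$, so that $A_l$ indeed lands in $\mathcal{A}_s(l)$.

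For the extension statement, given $A\in\mathcal{A}_s(l)$ I would define $\tilde A=(\tilde A_j^{j'})_{j,j'\in J^+}$ by $\tilde A_j^{j'}:=A_j^{j'}$ if $j,j'\in l$ and $\tilde A_j^{j'}:=0$ otherwise. For $j\notin l$ the $j$-th row of $\tilde A$ is identically zero, and for $j\in l$,
\begin{eqnarray*}
\sum_{j'\in J^+}e^{2s|j-j'|}\|\tilde A_j^{j'}\|_0^2=\sum_{j'\in l}e^{2s|j-j'|}\|A_j^{j'}\|_0^2,
\end{eqnarray*}
the adjoined entries contributing nothing. Hence the supremum over $j\in J^+$ of the left-hand sums equals the supremum over $j\in l$ of the right-hand sums, i.e. $|\tilde A|_s=|A|_s$, so $\tilde A\in\mathcal{A}_s$; the self-adjointness pattern transfers verbatim.

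I expect the only point needing a word of care is the bookkeeping of the self-adjointness condition in the definition of $\mathcal{A}_s(l)$ --- its stability under both restriction and zero-extension --- which is immediate once one checks that the relevant pairs of indices stay inside $l$. Everything else is just monotonicity of suprema of series with nonnegative terms; alternatively one may simply invoke \cite{Berti1}, Lemma 2.9, as the authors do.
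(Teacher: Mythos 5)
Your proof is correct and is exactly the elementary argument the paper relies on: the paper simply refers this lemma back to \cite{Berti1} (Lemma 2.9), and the content is precisely your observation that restricting the index set drops nonnegative terms from the sup of row sums while zero-extension adds none. Your extra remark about checking the self-adjointness pattern in $\mathcal{A}_s(l)$ is a sensible piece of bookkeeping and does not change the substance.
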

Lemma 11 tells us that all the properties (algebra,interpolation, etc) hold for $\mathcal{A}_s(l)$ with constants in dependent of $l$. We use $I_l$ to denote the projectors
\begin{eqnarray*}
\Pi_l:\textbf{H}_s\longrightarrow\textbf{H}_l:=\bigoplus_{j\in l\cap J^+}\mathcal{N}_j~~satisfy~|I_l|=1,~\forall s\geq0.
\end{eqnarray*}

\section{Nash-Moser-type iteration scheme}
Let $(X_s,\|\cdot\|_s)_{s\geq0}$ be a scale of Banach spaces such that
\begin{eqnarray*}
\forall s\leq s',~~X_{s'}\subseteq X_s,~~\|u\|_s\leq\|u\|_{s'},~~\forall u\in X_{s'}.
\end{eqnarray*}
We define the finite dimensional subspaces
\begin{eqnarray*}
\textbf{H}_s^{(N_i)}:=\bigoplus_{j\in J_{N_i}^+}\mathcal{N}_j\subset\cap_{s\geq0}X_s,
\end{eqnarray*}
where $J_N^+:=\{j\in J^+||j+\overrightarrow{\rho}|\leq N_i\}$, $X_s=\textbf{H}_s(\mathcal{M},\textbf{R})$, $\forall s\leq k$, $i$ denotes the ``$i$''th iterative step. For a given suitable $N_0>1$, we take $N_i\leq N_{i+1}$ and and $N_i=N_0^i$, $\forall i\in\textbf{N}$.

Let $(\textbf{H}_s^{(N_i)})_{N_i\geq0}$ be an increasing family of closed subspaces of $\cap_{s\geq0}X_s$ with projectors
$\Pi^{(N_i)}:X_s\longrightarrow\textbf{H}_s^{(N_i)}$ satisfying the ``smoothing'' properties:
\begin{eqnarray}
&&\|\Pi^{(N_i)}u\|_{s+d}\leq N_i^d\|u\|_s,~~\forall u\in X_s,~~\forall s,~d\geq0,\nonumber\\
\label{E3-1R2}
&&\|(I-\Pi^{(N_i)})u\|_{s}\leq N_i^{-d}\|u\|_{s+d},~~\forall u\in X_{s+d},~~\forall s,~d\geq0.
\end{eqnarray}
Moreover, there holds
\begin{eqnarray*}
&&\Pi^{(N_i)}u:=\sum_{j\in J_{N_i}^+}\Pi_{\mathcal{N}_j}u.
\end{eqnarray*}
Consider
\begin{eqnarray}\label{E3-1}
L_au=\varepsilon f(x,u),~~where~~L_a:=-\triangle+1+\varepsilon a\triangle^{\varrho}.
\end{eqnarray}
The linearized operator of (\ref{E3-1}) has the following form
\begin{eqnarray}\label{E3-3}
L^{(N_i)}_{a}:=\Pi^{(N_i)}(L_a-\varepsilon D_{u}f(\delta,u))|_{\textbf{H}_s^{(N_i)}}.
\end{eqnarray}
Before constructing first step approximation, the invertible property of operators $L^{(N_i)}_{a}$ is needed. We give the proof of the following result in next section.
\begin{lemma}
Assume that
\begin{eqnarray}\label{E5-1R}
|m-a n|\geq\frac{\gamma_1}{\max(1,|m|^{\frac{3}{2}})},~0<\gamma_1<1,~\forall (m,n)\in\textbf{Z}^2\backslash\{(0,0)\},
\end{eqnarray}
and $\|q\|_{\bar{\sigma}}\leq 1$, $\forall$ $1\leq r\leq N$, $\forall$ $\kappa\geq1$,
\begin{eqnarray}\label{E8-1}
\|(L_a^{(r)}(\delta,q))^{-1}\|_0\leq\frac{4r^{\kappa}}{\gamma_1}.
\end{eqnarray}
Then the linearized operator $L^{(N)}_{a}(\delta,q)$ is invertible and
$\forall$ $s_2>s_1>\bar{\sigma}>0$, the linearized operator $L^{(N)}_{a}$ satisfies
\begin{eqnarray}\label{E3-4}
\|(L^{{(N)}}_{a}(\delta,q))^{-1}u\|_{s_1}\leq C(s_2-s_1)N^{\tau+\kappa_0}\left(1+\varepsilon\varsigma^{-1}\|q\|^p_{s_2}\right)^3\|u\|_{s_2},
\end{eqnarray}
where $C(s_2-s_1)=c(s_2-s_1)^{-\tau}$, $c=c(\varsigma,\tau,s,\tilde{s},\gamma_1,\gamma)$ denotes a constant.
\end{lemma}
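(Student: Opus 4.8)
The plan is to decompose the linearized operator $L_a^{(N)}(\delta,q)$ as a diagonal part plus a perturbation and then invert it by a Neumann-type argument combined with a multiscale (resolvent-coupling) step to control the off-diagonal contribution coming from the small divisors. Write $L_a^{(N)} = D_N - \varepsilon R_N$, where $D_N := \Pi^{(N)}(-\triangle + 1 + \varepsilon a\triangle^{\varrho})|_{\textbf{H}_s^{(N)}}$ is diagonal in the eigenbasis $\textbf{e}_{j,\sigma}$ with entries $\omega_j^2 + 1 - \varepsilon a\omega_j^{2\varrho}$, and $R_N := \Pi^{(N)} D_u f(\delta,q)|_{\textbf{H}_s^{(N)}}$ is the (self-adjoint, polynomially localized) multiplication-type operator coming from the nonlinearity. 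The first-order Melnikov condition (\ref{E1-5}), which follows from the Diophantine hypothesis (\ref{E5-1R}) via Lemma 2, gives $\|D_N^{-1}\|_0 \le \gamma^{-1} N^{\tau}$, and by Lemma 9 (applied with the weighted estimate on $D_N^{-1}$ coming from the exponential decay of the Clebsch--Gordan/block structure) one gets a bound $|D_N^{-1}|_{s} \le C(s)N^{\tau+\kappa_0}$ in the polynomially localized norm $\mathcal{A}_s(J_N^+)$; this is where the crucial spectral property flagged after (\ref{E1-7}) — that the eigenvalues lie in $\textbf{Z}D^{-1}$ and $d_j \le |j+\rho|^{\dim G - r}$ — enters, because it lets us pass from the scalar lower bound to a bound on the block-diagonal inverse with polynomial loss only.

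Next I would handle the coupling. Since $R_N$ need not be small, a plain Neumann series on $D_N^{-1}R_N$ diverges; instead I would invoke the hypothesis (\ref{E8-1}), which asserts the $\textbf{L}^2$-invertibility of all the lower-step operators $L_a^{(r)}$, $r \le N$, with the polynomial bound $\|(L_a^{(r)})^{-1}\|_0 \le 4r^{\kappa}/\gamma_1$. The standard mechanism (as in Berti--Procesi, adapted here) is: split $J_N^+$ into a ``regular'' region where $D_N$ already dominates $\varepsilon R_N$ and a ``singular'' region which is a finite union of clusters each of diameter $\lesssim N^{C}$; on each cluster one uses (\ref{E8-1}) at the appropriate scale $r$, and then couples the clusters via a resolvent identity using the exponential off-diagonal decay of $R_N$ furnished by Lemma 10 / Lemma 5. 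Concretely, one writes the inverse as a geometric-type series whose general term is a product of cluster-resolvents (each bounded by $4r^{\kappa}/\gamma_1$ in $\textbf{L}^2$) separated by factors of $\varepsilon R_N$ (each small in $\mathcal{A}_s$-norm thanks to the $e^{-c|j-j'|}$ decay and the smallness coming from $\varepsilon$); the exponential gains between distinct clusters make the series converge and reconstruct the full polynomially localized inverse.

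Finally I would assemble the quantitative estimate (\ref{E3-4}). Using the algebra and interpolation inequalities (\ref{E2-4})--(\ref{E2-6}) of Lemma 7 to track how the $\mathcal{A}_{s_1}$-norm of the inverse depends on $|R_N|_{s_2}$, and noting that $|R_N|_{s_2} = |\Pi^{(N)}D_uf(\delta,q)|_{s_2} \le c(s_2)\|q\|_{s_2}^{p-1} \le c(s_2)(1+\varepsilon\varsigma^{-1}\|q\|_{s_2}^p)$ by the tame estimate (\ref{E1-6}) together with Lemma 11, one finds that each coupling factor contributes one power of $(1+\varepsilon\varsigma^{-1}\|q\|_{s_2}^p)$ and the three-step resolvent-chain structure (regular–singular–regular) produces the cube; the loss in derivatives $s_2 \to s_1$ is absorbed into $C(s_2-s_1) = c(s_2-s_1)^{-\tau}$ exactly as in the smoothing estimate (\ref{E3-1R2}). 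Finally one passes from the $\mathcal{A}_{s_1}$-operator norm to the action on $\textbf{H}_{s_1}$ via (\ref{E2-6}), which yields (\ref{E3-4}).

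The main obstacle I expect is the multiscale coupling step: showing that the singular sites organize into well-separated clusters of controlled size and that the resolvent chain genuinely converges with the stated polynomial-in-$N$ loss rather than an exponential blow-up. This requires carefully exploiting (i) the Diophantine-induced separation of near-resonant indices, (ii) the fact that $\Lambda_{\mathcal{M}}$ is closed under sums and the block dimensions grow only polynomially (so the number of sites in a ball of radius $R$ is $\lesssim R^{r+n}$), and (iii) the exponential off-diagonal decay of $R_N$. Getting the powers $\tau + \kappa_0$ and the exponent $3$ sharp — i.e., bookkeeping the interplay between the cluster-count, the per-cluster bound $4r^\kappa/\gamma_1$, and the $\varepsilon$-smallness — is the delicate part; everything else is an application of the already-established algebra/interpolation lemmas.
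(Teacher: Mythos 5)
Your proposal is correct and follows essentially the same route as the paper's proof: the splitting $L_a^{(N)}=D+\varepsilon T$, the regular/singular site decomposition into dyadic, well-separated clusters (the paper's Lemma 18), per-cluster invertibility supplied by hypothesis (\ref{E8-1}) (Lemma 21), Neumann series plus interpolation for the coupling (Lemmas 20 and 22--24), and the three-factor resolvent identity (\ref{E4-22}) built on the quasi-singular matrix $\mathcal{L}=L_S-L_S^{R}L_R^{-1}L_R^{S}$ --- exactly the ``regular--singular--regular'' chain you identify as the source of the cube $(1+\varepsilon\varsigma^{-1}|T|_{\tilde{s}})^3$. The only small divergence is that the paper never inverts the full diagonal with an $N^{\tau}$ loss: on regular sites $|D_j|\ge\varsigma$ already gives $|D_R^{-1}|\le\varsigma^{-1}$ and the Melnikov condition is instead spent on the $(s_2-s_1)^{-\tau}$ smoothing loss, while on singular sites the diagonal is bypassed entirely in favour of the cluster blocks $\mathcal{L}_{\alpha}$.
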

In fact, in the iteration process, $N$ depends on the iteration step $i$.
By (\ref{E3-1}), we define
\begin{eqnarray}\label{E3-6}
\mathcal{J}_1(u)=L_{a}u-\varepsilon \Pi^{(N_i)}f(x,u)=0.
\end{eqnarray}
Next we construct the ``first step approximation".
\begin{lemma}
Assume that $a$ is diophantine. Then system
(\ref{E3-6}) has the ``first step approximation"
$u_1\in \textbf{H}_s^{(N_1)}$:
\begin{eqnarray}\label{E3-8}
u_1&=&-(L^{(N_1)}_{a})^{-1}E_0\in \textbf{H}_s^{(N_1)},\\
\label{E3-10}
E_1&=&R_0=-\varepsilon\Pi^{(N_1)}\left(f(x,u_0+u_1)-f(x,u_0)-D_uf(x,u_0)u_1\right).
\end{eqnarray}
\end{lemma}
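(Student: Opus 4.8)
The plan is to treat (\ref{E3-6}) by a single Newton step around the trivial approximation $u_0=0$. Since $f$ vanishes to order $p\geq 2$ at $u=0$, the zeroth residual is $E_0=\mathcal{J}_1(u_0)=-\varepsilon\Pi^{(N_1)}f(x,u_0)=-\varepsilon\Pi^{(N_1)}f(x,0)=0$ if one starts literally from $u_0=0$; more precisely, one starts from the approximate solution $u_0$ carrying the leading-order profile (recall $f(\delta,u)=b(x)u^p+O(\delta)$ after rescaling), so that $E_0:=L_au_0-\varepsilon\Pi^{(N_1)}f(x,u_0)$ is already small, of size $O(\varepsilon\delta)$ in a high norm. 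First I would fix the parameters: choose $N_1=N_0$ with $N_0$ large, verify that the Diophantine hypothesis on $a$ together with Lemma 2 yields the first-order Melnikov condition (\ref{E1-5}), and check that the smallness hypotheses $\|q\|_{\bar\sigma}\leq 1$ and the inductive bound (\ref{E8-1}) on $\|(L_a^{(r)})^{-1}\|_0$ hold at the first stage — this is immediate for $r\leq N_1$ once $\varepsilon$ (equivalently $\delta$) is small, because $L_a^{(r)}$ is then a small perturbation of the diagonal operator with entries $\omega_j^2+1-\varepsilon a\omega_j^{2\varrho}$, whose inverse is controlled by (\ref{E1-5}).

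Next I would invoke Lemma 12 with $N=N_1$ and $q=u_0$: it gives that $L_a^{(N_1)}=L_a^{(N_1)}(\delta,u_0)$ is invertible on $\textbf{H}_s^{(N_1)}$ with the tame bound (\ref{E3-4}), namely $\|(L_a^{(N_1)})^{-1}u\|_{s_1}\leq C(s_2-s_1)N_1^{\tau+\kappa_0}(1+\varepsilon\varsigma^{-1}\|u_0\|_{s_2}^p)^3\|u\|_{s_2}$. Then I \emph{define} $u_1:=-(L_a^{(N_1)})^{-1}E_0\in\textbf{H}_s^{(N_1)}$, which is exactly (\ref{E3-8}). By construction $u_1$ solves the linearized equation $L_a^{(N_1)}u_1+E_0=0$, i.e. $\Pi^{(N_1)}(L_a-\varepsilon D_uf(x,u_0))u_1=-E_0$. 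Writing the new residual $E_1:=\mathcal{J}_1(u_0+u_1)$ and expanding,
\begin{eqnarray*}
E_1&=&L_a(u_0+u_1)-\varepsilon\Pi^{(N_1)}f(x,u_0+u_1)\\
&=&\big(L_au_0-\varepsilon\Pi^{(N_1)}f(x,u_0)\big)+L_au_1-\varepsilon\Pi^{(N_1)}\big(f(x,u_0+u_1)-f(x,u_0)\big)\\
&=&E_0+L_au_1-\varepsilon\Pi^{(N_1)}D_uf(x,u_0)u_1-\varepsilon\Pi^{(N_1)}\big(f(x,u_0+u_1)-f(x,u_0)-D_uf(x,u_0)u_1\big).
\end{eqnarray*}
Since $u_1\in\textbf{H}_s^{(N_1)}$ one has $L_au_1=\Pi^{(N_1)}L_au_1$ up to the high-mode tail, so the first three terms combine to $E_0+L_a^{(N_1)}u_1=0$ by the defining equation for $u_1$, leaving precisely $E_1=-\varepsilon\Pi^{(N_1)}(f(x,u_0+u_1)-f(x,u_0)-D_uf(x,u_0)u_1)=R_0$, which is (\ref{E3-10}). (If $u_0=0$ is taken as the base point, the formula reads $E_1=-\varepsilon\Pi^{(N_1)}(f(x,u_1)-f(x,0)-D_uf(x,0)u_1)$ and the $D_uf(x,0)=0$ terms drop, matching the displayed form.)

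The only genuine content beyond bookkeeping is to check that $u_1$ is well-defined in the stated space and small, and this is where the main obstacle lies: one must control the loss of derivatives. Using (\ref{E3-4}) with norms $s_1=s$ and $s_2=s+\sigma$ for a suitable fixed $\sigma$ depending on $\tau+\kappa_0$, and the bound $\|E_0\|_{s+\sigma}=O(\varepsilon\delta)$, one gets $\|u_1\|_s\leq C N_1^{\tau+\kappa_0}\|E_0\|_{s+\sigma}$, which is small for $\delta$ small provided $N_1^{\tau+\kappa_0}\varepsilon\delta\ll 1$ — this dictates the relation between $N_0$ and $\delta_0$ and is exactly the kind of balancing the full Nash-Moser scheme in the next lemmas will iterate. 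The hard part is therefore not the algebraic identity (\ref{E3-10}), which is automatic, but verifying the hypotheses of Lemma 12 at the first step and quantifying $\|E_0\|_{s+\sigma}$ via the tame estimate (\ref{E1-8}) on $f$; once those are in hand, the quadratic smallness $\|E_1\|_s\lesssim\varepsilon\|u_1\|_{s_0}^{p-1}\|u_1\|_s$ follows directly from (\ref{E1-8}) and the algebra property of $\textbf{H}_s$ (Lemma 5), setting up the induction for the subsequent steps.
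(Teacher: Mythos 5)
Your proof is correct and follows essentially the same route as the paper: define $E_0=L_au_0-\varepsilon\Pi^{(N_1)}f(x,u_0)$, Taylor-expand $\mathcal{J}_1(u_0+u_1)$ to isolate the linearized operator $L_a^{(N_1)}$, solve $E_0+L_a^{(N_1)}u_1=0$ by Lemma 12, and identify $E_1$ with the Taylor remainder $R_0$. The extra quantitative discussion you add (verifying the hypotheses of Lemma 12 and bounding $\|E_0\|$, $\|u_1\|$, $\|E_1\|$) is not part of the paper's proof of this lemma but is deferred there to Lemma 14, so it does no harm.
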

\begin{proof}
Assume that we have chosen suitable the ``$0th$ step" approximation
solution $u_0$. Then, the target is to get the ``$1th$ step"
approximation solution.

Denote
\begin{eqnarray}\label{E3-12R}
E_0=L_{a}u_0-\varepsilon\Pi^{(N_1)}f(x,u_0).
\end{eqnarray}
By (\ref{E3-6}), we have
\begin{eqnarray}\label{E3-12}
\mathcal{J}_1(u_0+u_1)&=&L_{a}(u_0+u_1)-\varepsilon\Pi^{(N_1)}f(x,u_0+u_1)\nonumber\\
&=&L_{a}u_0-\varepsilon\Pi^{(N_1)}f(x,u_0)+L_{a}u_1+\varepsilon\Pi^{(N_1)} D_uf(x,u_0)u_1\nonumber\\
&&-\varepsilon\Pi^{(N_1)}(f(x,u_0+u_1)-f(x,u_0)-D_uf(x,u_0)u_1)\nonumber\\
&=&E_0+L^{(N_1)}_{a}u_1+R_0.
\end{eqnarray}
Then taking
\begin{eqnarray*}
E_0+L^{(N_1)}_{a}u_1=0,
\end{eqnarray*}
yields
\begin{eqnarray*}
u_1=-(L^{(N_1)}_{a})^{-1}E_0\in\textbf{H}_s^{(N_1)}.
\end{eqnarray*}
By (\ref{E3-12}), we denote
\begin{eqnarray*}
E_1&:=&R_0=\mathcal{J}_1(u_0+u_1)\nonumber\\
&=&-\varepsilon\Pi^{(N_1)}(f(x,u_0+u_1)-f(x,u_0)-D_uf(x,u_0)u_1).
\end{eqnarray*}
On the other hand, by (\ref{E3-6}) and (\ref{E3-12R}), we can obtain
\begin{eqnarray}\label{E3-12R1}
E_0=-\varepsilon(I-\Pi^{(N_0)})\Pi^{(N_1)}f(x,u_0).
\end{eqnarray}
This completes the proof.
\end{proof}

In order to prove the convergence of the Newton algorithm,  the
following KAM-style estimate is needed. For convenience, we define
\begin{eqnarray}\label{E3-12R2}
\tilde{E}_0:=-\varepsilon\Pi^{(N_1)}f(x,u_0).
\end{eqnarray}
\begin{lemma}
Assume that $a$ is diophantine. Then for any $0<\alpha<\sigma$, the
following estimates hold:
\begin{eqnarray}
&&\|u_1\|_{\sigma-\alpha}\leq C(\alpha)(1+\varepsilon\varsigma^{-1}\|u_0\|_{\sigma}^p)^{3}\|\tilde{E}_0\|_{\sigma+\tau+\kappa_0},\nonumber\\
\label{E3-13}
&&\|E_1\|_{\sigma-\alpha}\leq C^{p}(\alpha)(1+\varepsilon\varsigma^{-1}\|u_0\|_{\sigma}^p)^{3p}\|\tilde{E}_0\|^{p}_{\sigma+\tau+\kappa_0},
\end{eqnarray}
where $C(\alpha)$ is defined in (\ref{E2-12'}).
\end{lemma}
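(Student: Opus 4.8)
The plan is to derive both estimates in \eqref{E3-13} directly from the construction in Lemma 3, using the invertibility bound \eqref{E3-4} of Lemma 1 for the linearized operator and the Taylor tame estimate \eqref{E1-8} for the remainder. First I would observe that by \eqref{E3-8} and the fact that $E_0=-\varepsilon(I-\Pi^{(N_0)})\Pi^{(N_1)}f(x,u_0)$ (that is, \eqref{E3-12R1}), together with the definition \eqref{E3-12R2} of $\tilde E_0$, we have $E_0=(I-\Pi^{(N_0)})\tilde E_0$, so $\|E_0\|_{\sigma+\tau+\kappa_0}\le\|\tilde E_0\|_{\sigma+\tau+\kappa_0}$ since the projector $I-\Pi^{(N_0)}$ has norm $\le 1$ on every $\textbf H_s$. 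Then applying \eqref{E3-4} with $s_2=\sigma+\tau+\kappa_0$, $s_1=\sigma-\alpha$ and $N=N_1$, and absorbing the factor $C(s_2-s_1)N_1^{\tau+\kappa_0}$ into the constant $C(\alpha)$ (defined via \eqref{E2-12'}), gives
\begin{eqnarray*}
\|u_1\|_{\sigma-\alpha}=\|(L^{(N_1)}_a)^{-1}E_0\|_{\sigma-\alpha}\le C(\alpha)\bigl(1+\varepsilon\varsigma^{-1}\|u_0\|_\sigma^p\bigr)^3\|\tilde E_0\|_{\sigma+\tau+\kappa_0},
\end{eqnarray*}
which is the first inequality. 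One technical point to check here is that $\|u_0\|_{\sigma+\tau+\kappa_0}$ can be replaced by $\|u_0\|_\sigma$ in the $\|q\|_{s_2}^p$ term of \eqref{E3-4}; this is legitimate because $u_0\in\textbf H_s^{(N_0)}$ is finite-dimensional and the smoothing estimate following \eqref{E3-1R2} bounds the higher norm by $N_0^{\tau+\kappa_0}$ times the lower one, again absorbed into $C(\alpha)$.

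For the second estimate I would start from the formula $E_1=R_0=-\varepsilon\Pi^{(N_1)}\bigl(f(x,u_0+u_1)-f(x,u_0)-D_uf(x,u_0)u_1\bigr)$ from \eqref{E3-10}, discard the projector $\Pi^{(N_1)}$ (norm $\le1$), and invoke the special case $s_0=s$ of the Taylor tame estimate, namely $\|f(x,u+u')-f(x,u')-D_uf(x,u')u\|_s\le c(s)\|u\|_s^p$, with $u'=u_0$, $u=u_1$, $s=\sigma-\alpha$. This yields $\|E_1\|_{\sigma-\alpha}\le \varepsilon c(\sigma-\alpha)\|u_1\|_{\sigma-\alpha}^p$, and then substituting the bound on $\|u_1\|_{\sigma-\alpha}$ just obtained gives
\begin{eqnarray*}
\|E_1\|_{\sigma-\alpha}\le \varepsilon c(\sigma-\alpha)C(\alpha)^p\bigl(1+\varepsilon\varsigma^{-1}\|u_0\|_\sigma^p\bigr)^{3p}\|\tilde E_0\|_{\sigma+\tau+\kappa_0}^p,
\end{eqnarray*}
and after absorbing $\varepsilon c(\sigma-\alpha)$ into the redefined constant $C^p(\alpha)$ we recover \eqref{E3-13}. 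I would also need to verify that the smallness hypotheses of \eqref{E1-8} are met, i.e. $\|u_0\|_{s_0}\le1$ and $\|u_1\|_{s_0}\le1$; the first holds by the choice of the $0$th approximation and the second follows for $\delta$, $\varepsilon$ small enough from the first inequality of the lemma since $\|\tilde E_0\|$ is itself $O(\varepsilon)$ — this requires fixing $\alpha$ so that $\sigma-\alpha\ge s_0=\bar\sigma$.

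The main obstacle I expect is bookkeeping the loss of derivatives and keeping the constant $C(\alpha)$ genuinely of the claimed form $c(\alpha)^{-\tau}$ (up to the $N_1$-powers) rather than something that degenerates: one must choose the intermediate Sobolev indices so that the $\tau+\kappa_0$ derivatives lost in \eqref{E3-4}, the $d$ derivatives gained/lost through the smoothing estimates \eqref{E3-1R2}, and the requirement $\sigma-\alpha>s_1>\bar\sigma$ are all simultaneously compatible with $0<\sigma_0(\mathcal M)<\bar\sigma(\mathcal M)<\sigma(\mathcal M)<k(\mathcal M)-1$. Everything else is a direct chaining of Lemma 1, the smoothing properties, and the tame estimate \eqref{E1-8}; the cubic and $3p$-th powers of $(1+\varepsilon\varsigma^{-1}\|u_0\|_\sigma^p)$ are simply carried through from \eqref{E3-4} and its $p$-th power, with no further analysis needed.
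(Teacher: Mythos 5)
Your skeleton is the same as the paper's: estimate $u_1=-(L^{(N_1)}_a)^{-1}E_0$ through Lemma 12, then bound $E_1$ by the Taylor tame estimate (\ref{E1-8}) with $s_0=s=\sigma-\alpha$ and insert the first bound; your additional check of the smallness hypotheses $\|u_0\|_{s_0},\|u_1\|_{s_0}\le1$ is a sensible supplement the paper leaves implicit. Where you genuinely diverge is the treatment of the loss $N_1^{\tau+\kappa_0}$. The paper applies (\ref{E3-4}) at the \emph{lower} index $s_2=\sigma$, so that $C(s_2-s_1)=C(\alpha)$ and the remaining factor $N_1^{\tau+\kappa_0}\|E_0\|_{\sigma}$ is then killed by the smoothing estimate (\ref{E3-1R2}) applied to $E_0=(I-\Pi^{(N_0)})\tilde E_0$, i.e. $N_1^{\tau+\kappa_0}\|E_0\|_{\sigma}\le N_1^{\tau+\kappa_0}N_0^{-(\tau+\kappa_0)}\|\tilde E_0\|_{\sigma+\tau+\kappa_0}=\|\tilde E_0\|_{\sigma+\tau+\kappa_0}$ since $N_1=N_0$; this is exactly the power-of-$N$-for-derivatives trade that is reused verbatim at every step of Lemma 15 and keeps $C(\alpha)$ of the form (\ref{E2-12'}), independent of $N_0$. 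You instead apply (\ref{E3-4}) at $s_2=\sigma+\tau+\kappa_0$, bound $\|E_0\|_{\sigma+\tau+\kappa_0}\le\|\tilde E_0\|_{\sigma+\tau+\kappa_0}$ trivially, and absorb both $N_1^{\tau+\kappa_0}$ and the $N_0^{(\tau+\kappa_0)p}$ arising from replacing $\|u_0\|_{\sigma+\tau+\kappa_0}$ by $\|u_0\|_{\sigma}$ into $C(\alpha)$. For this single first step that is harmless because $N_1=N_0$ is fixed, but it proves the estimate with a constant that secretly contains powers of $N_0$, whereas the paper's $C(\alpha)$ does not and the convergence condition in Lemma 15 tracks the $N_0^{(\tau+\kappa_0)p^2}$ factor explicitly; your absorption device also would not survive the iteration, where $N_i\to\infty$ and only the smoothing trade against $(I-\Pi^{(N_{i-1})})$ works. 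A further minor cost of your route is that invoking Lemma 12 at the higher index $\sigma+\tau+\kappa_0$ implicitly demands control of the multiplication matrix $T$ (hence regularity of $f$) beyond the range $\sigma<k-1$ that the paper's choice $s_2=\sigma$ respects. So: correct in substance for the statement at hand, but the paper's handling of the $N_1$-factor is the one that matches the stated constant and the subsequent Nash--Moser bookkeeping.
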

\begin{proof}
Denote
\begin{eqnarray}\label{E2-12'}
C(\alpha)&=&c(\varsigma,\tau,s,\tilde{s},\gamma_1,\gamma)\alpha^{-\tau}.
\end{eqnarray}
From the definition of $u_1$ in
(\ref{E3-8}), by Lemma 12, (\ref{E3-1R2}) and (\ref{E3-12R2}), we derive
\begin{eqnarray}\label{E3-14}
\|u_1\|_{\sigma-\alpha}&=&\|-(L^{( N_1)}_{a})^{-1}E_0\|_{\sigma-\alpha}\nonumber\\
&\leq&C(\alpha)N_1^{\tau+\kappa_0}(1+\varepsilon\varsigma^{-1}\|u_0\|_{\sigma}^p)^3\|E_0\|_{\sigma}\nonumber\\
&\leq&C(\alpha)(1+\varepsilon\varsigma^{-1}\|u_0\|_{\sigma}^p)^3\|\tilde{E}_0\|_{\sigma+\tau+\kappa_0}.
\end{eqnarray}
By assumption (\ref{E1-8}) and the
definition of $E_1$, we have
\begin{eqnarray*}
\|E_1\|_{\sigma-\alpha}
&=&\|\Pi^{(N_1)}(f(x,u_0+u_1)-f(x,u_0)-D_uf(x,u_0)u_1)\|_{\sigma-\alpha}\nonumber\\
&\leq&\|u_1\|_{\sigma-\alpha}^{p}\nonumber\\
&\leq&C^{p}(\alpha)(1+\varepsilon\varsigma^{-1}\|u_0\|_{\sigma}^p)^{3p}\|\tilde{E}_0\|^p_{\sigma+\tau+\kappa_0}.
\end{eqnarray*}
This completes the proof.
\end{proof}
For $i\in\textbf{N}$ and $0<\sigma_{0}(\mathcal{M})<\bar{\sigma}(\mathcal{M})<\sigma(\mathcal{M})<k(\mathcal{M})-1$, set
\begin{eqnarray}\label{E3-16}
&&\sigma_i:=\bar{\sigma}+\frac{\sigma-\bar{\sigma}}{2^i},\\
\label{E3-17}
&&\alpha_{i+1}:=\sigma_i-\sigma_{i+1}=\frac{\sigma-\bar{\sigma}}{2^{i+1}}.
\end{eqnarray}
By (\ref{E3-16})-(\ref{E3-17}), it follows that
\begin{eqnarray*}
\sigma_0>\sigma_1>\ldots>\sigma_i>\sigma_{i+1}>\ldots,~for~i\in\textbf{N}.
\end{eqnarray*}
Define
\begin{eqnarray*}
&&\mathcal{P}_1(u_0):=u_0+u_1,~~for~u_0\in \textbf{H}_{\sigma_0}^{(N_0)},\\
&&E_{i}=\mathcal{J}_1(\sum_{k=0}^iu_k)=\mathcal{J}_1(\mathcal{P}_1^i(u_0)),
\end{eqnarray*}
In fact, to obtain the ``$i$ th" approximation solution
$u_i\in\textbf{H}_{\sigma_i}^{(N_i)}$ of system
(\ref{E3-6}), we need to solve following equations
\begin{eqnarray*}
\mathcal{J}_1(\sum_{k=0}^iu_k)&=&L_{a}(\sum_{k=0}^{i-1}
u_k)-\varepsilon\Pi^{(N_i)}f(x,\sum_{k=0}^{i-1}u_k)+L_{a}u_i-\varepsilon\Pi^{(N_i)}D_uf(x,\sum_{k=0}^{i-1}u_k)u_i\nonumber\\
&&-\varepsilon\Pi^{(N_i)}\left(f(x,\sum_{k=0}^{i}u_k)- f(x,\sum_{k=0}^{i-1}u_k)-D_uf(x,\sum_{k=0}^{i-1}u_k)u_i\right).
\end{eqnarray*}
Then, we get the `$i$ th` step approximation $u_i\in
\textbf{H}_{\sigma_i}^{(N_i)}$ :
\begin{eqnarray}\label{E2-32}
u_i=-(L_{a}^{(N_i)})^{-1}E_{i-1},
\end{eqnarray}
where
\begin{eqnarray*}
E_{i-1}=L_{a}(\sum_{k=0}^{i-1}
u_k)-\varepsilon\Pi^{(N_i)}f(x,\sum_{k=0}^{i-1}u_k)=-\varepsilon(I-\Pi^{(N_{i-1})})\Pi^{(N_i)}f(x,\sum_{k=0}^{i-1}u_k).
\end{eqnarray*}
As done in Lemma 13, it is easy to get that
\begin{eqnarray}\label{E2-32r}
E_i:=R_{i-1}=-\varepsilon\Pi^{(N_i)}\left(f(x,\sum_{k=0}^{i-1}u_k)-f(x,\sum_{k=0}^{i}u_k)-D_uf(x,\sum_{k=0}^{i-1}u_k)u_i\right),~~~
\end{eqnarray}
\begin{eqnarray}
\label{E2-32rr}
\tilde{E}_i=-\varepsilon\Pi^{(N_i)}f(x,\sum_{k=0}^{i-1}u_k).
\end{eqnarray}
Hence, we only need to estimate $R_{i-1}$
to prove the convergence of algorithm. In the following, a
sufficient condition on the convergence of Newton algorithm is
proved. This proof is based on Lemma 14. It also shows the existence of solutions for
(\ref{E3-6}).
\begin{lemma}
Assume that $a$ is diophantine. Then, for sufficiently small $\varepsilon$, equations
(\ref{E3-1}) has a solution
\begin{eqnarray*}
u_{\infty}=\sum_{k=0}^{\infty}u_k\in
\textbf{H}_{\bar{\sigma}}\cap\mathcal{B}_1(0),
\end{eqnarray*}
where $\mathcal{B}_1(0):=\{u|\|u\|_s\leq1,~\forall s>\bar{s}>0\}.$
\end{lemma}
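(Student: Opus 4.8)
The plan is to run the Nash--Moser iteration set up in the preceding lemmas and show that the sequence $\{u_i\}$ defined by \eqref{E2-32} is Cauchy in $\textbf{H}_{\bar\sigma}$ with limit in the unit ball, and that the errors $E_i\to 0$, so that the limit solves \eqref{E3-1}. The heart of the matter is a quadratic-type recursion for the sizes $\varepsilon_i:=\|\tilde E_i\|_{\sigma_i+\tau+\kappa_0}$ (equivalently, for $\|u_i\|_{\sigma_i}$). First I would record, using \eqref{E2-32r}, \eqref{E2-32rr}, the smoothing property \eqref{E3-1R2} and the Taylor tame estimate \eqref{E1-8}, that
\begin{eqnarray*}
\|u_{i+1}\|_{\sigma_{i+1}}\leq C(\alpha_{i+1})\bigl(1+\varepsilon\varsigma^{-1}\|{\textstyle\sum_{k\le i}}u_k\|_{\sigma_i}^p\bigr)^3 N_i^{\tau+\kappa_0}\varepsilon_i,
\qquad
\varepsilon_{i+1}\leq C(\alpha_{i+1})^p\bigl(1+\cdots\bigr)^{3p}\varepsilon_i^{p},
\end{eqnarray*}
which is exactly the content of Lemma 14 applied at step $i$ (with $u_0$ replaced by the partial sum and $N_1$ by $N_i$). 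Here $C(\alpha_{i+1})=c\,\alpha_{i+1}^{-\tau}=c\,(\sigma-\bar\sigma)^{-\tau}2^{(i+1)\tau}$ grows only geometrically in $i$, while $N_i^{\tau+\kappa_0}=N_0^{i(\tau+\kappa_0)}$ also grows geometrically; the gain is the exponent $p>1$ on $\varepsilon_i$.

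Next I would prove by induction the two bookkeeping statements: (a) $\|\sum_{k\le i}u_k\|_{\sigma_i}\le 1$ (so that the tame estimates and the hypothesis $\|q\|_{\bar\sigma}\le 1$ of Lemma 12 apply, and the factor $(1+\varepsilon\varsigma^{-1}\|\cdot\|^p)^3\le 2^3$ is harmless for small $\varepsilon$), and (b) $\varepsilon_i\le \varepsilon_0^{\,p^{\,i}/2}$, or more robustly $\varepsilon_i\le K^{-1}\eta^{\,p^i}$ for suitable constants $K,\eta<1$. The standard device is: choose $N_0$ large and $\varepsilon$ (hence $\varepsilon_0$, which is $O(\varepsilon)$ by \eqref{E3-12R1}-type reasoning since $\tilde E_0=-\varepsilon\Pi^{(N_1)}f(x,u_0)$ and $u_0=O(\delta)$) small enough that
\begin{eqnarray*}
C(\alpha_{i+1})^p\, 2^{3p}\, N_i^{(\tau+\kappa_0)}\,\varepsilon_i^{\,p-1}\le 1 \quad\text{for all }i,
\end{eqnarray*}
which holds because the superexponential decay $\varepsilon_i^{p-1}\sim \eta^{(p-1)p^i}$ beats the geometric growth $2^{(i+1)\tau p}N_0^{i(\tau+\kappa_0)}$. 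Summing the resulting bound $\|u_{i+1}\|_{\sigma_{i+1}}\le C(\alpha_{i+1})N_i^{\tau+\kappa_0}\cdot 2^3\varepsilon_i$ over $i$ gives a convergent series (again superexponential decay dominates), which simultaneously yields (a) with room to spare provided $\|u_0\|_{\sigma_0}$ and $\varepsilon$ are small, and gives $u_\infty=\sum_k u_k\in\textbf{H}_{\bar\sigma}$ with $\|u_\infty\|_{\bar\sigma}\le 1$, i.e. $u_\infty\in\textbf{H}_{\bar\sigma}\cap\mathcal B_1(0)$.

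Finally I would pass to the limit: since $E_i=\mathcal J_1(\sum_{k\le i}u_k)$ and $\|E_i\|_{\sigma_i-\alpha_{i+1}}=\|E_i\|_{\sigma_{i+1}}\to 0$ while $\sum_{k\le i}u_k\to u_\infty$ in $\textbf{H}_{\bar\sigma}$, continuity of $L_a$ on the scale and of $u\mapsto f(x,u)$ (with its tame bounds) forces $\mathcal J_1(u_\infty)=0$; because $\Pi^{(N_i)}\to I$ strongly on $\textbf{H}_s$ for $s<k$, this is equivalent to $L_a u_\infty=\varepsilon f(x,u_\infty)$, i.e. $u_\infty$ solves \eqref{E3-1}. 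The main obstacle, and the step deserving the most care, is the simultaneous closure of the induction: one must choose the loss parameters so that $\sigma_i\downarrow\bar\sigma$ stays above $s_0=\bar\sigma$ and $\sigma_i+\tau+\kappa_0<\sigma<k-1$ throughout (this is where the chain $0<\sigma_0(\mathcal M)<\bar\sigma(\mathcal M)<\sigma(\mathcal M)<k(\mathcal M)-1$ is used), while keeping the invertibility hypothesis \eqref{E8-1} of Lemma 12 valid at every scale $N_i$ — the latter is really a measure-theoretic restriction on the parameter $\delta$ (equivalently on $a$), and it is here that the positive-measure Cantor set from Theorem 1 enters; within the present lemma we simply assume $a$ diophantine so that \eqref{E5-1R} and hence the uniform bound \eqref{E8-1} hold, and defer the measure estimate to the appendix.
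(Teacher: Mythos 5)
Your proposal is correct and follows essentially the same route as the paper: iterate the KAM-style estimate of Lemma 14 at each scale $N_i$ to obtain the recursion $\|E_i\|_{\sigma_i}\lesssim C(\alpha_i)^pN_i^{(\tau+\kappa_0)p}\|E_{i-1}\|_{\sigma_{i-1}}^p$, choose $\varepsilon$ small so that the resulting superexponential decay $(\cdot)^{p^i}$ dominates the geometric growth of $C(\alpha_i)$ and $N_i^{\tau+\kappa_0}$, and sum the series. The only cosmetic difference is that you control the factor $(1+\varepsilon\varsigma^{-1}\|\cdot\|^p)^3$ by an induction keeping the partial sums in the unit ball (and you are somewhat more explicit about the limit passage $\mathcal J_1(u_\infty)=0$), whereas the paper disposes of the case $\varepsilon\varsigma^{-1}\|u_{i-1}\|^p\geq1$ by a separate argument using $u_0=0$.
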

\begin{proof}
We divide into two cases. If $\varepsilon\varsigma^{-1}\|u_{i-1}\|_{\sigma_{i-1}}^p<1$,
by Lemma 12, (\ref{E2-32}) and (\ref{E2-32rr}), we derive
\begin{eqnarray}\label{E3-18}
\|u_i\|_{\sigma_i}&=&\|-(L_{a}^{(N_i)})^{-1}E_{i-1}\|_{\sigma_i}\nonumber\\
&\leq&C(\alpha_i)N_{i}^{\tau+\kappa_0}(1+\varepsilon\varsigma^{-1}\|u_{i-1}\|_{\sigma_{i-1}}^p)^3\|E_{i-1}\|_{\sigma_{i-1}}\nonumber\\
&\leq&C(\alpha_i)(1+\varepsilon\varsigma^{-1}\|u_{i-1}\|_{\sigma_{i-1}}^{p})^3\|\tilde{E}_{i-1}\|_{\sigma_{i-1}+\tau+\kappa_0}\nonumber\\
&\leq&2C(\alpha_i)\|\tilde{E}_{i-1}\|_{\sigma_{i-1}+\tau+\kappa_0},
\end{eqnarray}
where $c(\varepsilon,\varsigma)$ is a constant depending on $\varepsilon$ and $\varsigma$.

Note that $N_i=N_0^i$, $\forall i\in\textbf{N}$. By (\ref{E2-32r})-(\ref{E3-18}) and
assumption (\ref{E1-8}), we have
\begin{eqnarray}\label{E3-20}
\|E_i\|_{\sigma_i}&=&\varepsilon\|\Pi^{(N_i)}(f(x,\sum_{k=0}^{i}u_k)-f(x,\sum_{k=0}^{i-1}u_k)-D_uf(x,\sum_{k=0}^{i-1}u_k)u_i)\|_{\sigma_i}\nonumber\\
&\leq&\varepsilon c(s)\|u_i\|_{\sigma_i}^p\nonumber\\
&\leq&\varepsilon c(s)N_i^{(\tau+\kappa_0)p}C^p(\alpha_i)\|E_{i-1}\|_{\sigma_{i-1}}^p\nonumber\\
&\leq&(\varepsilon c(s))^{p+1}N_i^{(\tau+\kappa_0)p}N_{i-1}^{(\tau+\kappa_0)p^2}C^p(\alpha_i)C^{p^2}(\alpha_{i-1})\|E_{i-2}\|_{\sigma_{i-2}}^{p^2}\nonumber\\
&\leq&\cdots\nonumber\\
&\leq&(\varepsilon c(s))^{\sum_{k=1}^{i-1}p^k+1}N_0^{(\tau+\kappa_0)p^{i+2}}\|E_{0}\|_{\sigma_{0}}^{p^i}
\prod_{k=1}^{i}C^{p^k}(\alpha_{i+1-k})\nonumber\\
&\leq&(\varepsilon c(s))^{p^i}(\varepsilon,\varsigma)(N_0^{(\tau+\kappa_0)p^2}\|E_{0}\|_{\sigma_{0}})^{p^i}
\prod_{k=1}^{i}C^{p^k}(\alpha_{i+1-k})\nonumber\\
&\leq&(\varepsilon c(s))^{p^i}(\varepsilon,\varsigma)\|\tilde{E}_{0}\|_{\sigma_{0}+(\tau+\kappa_0)p^2}^{p^i}
\prod_{k=1}^{i}C^{p^k}(\alpha_{i+1-k})\nonumber\\
&\leq&(8^{p^2}\varepsilon c(s)c^{p^2}(\tau,\sigma,\tilde{\sigma},\gamma_1,\gamma)\|\tilde{E}_0\|_{\sigma_{0}+(\tau+\kappa_0)p^2})^{p^i}.
\end{eqnarray}
Hence, choosing small $\varepsilon>0$ such that
\begin{eqnarray*}
8^{p^2}\varepsilon c(s)c^{p^2}(\tau,\sigma,\tilde{\sigma},\gamma_1,\gamma)\|\tilde{E}_0\|_{\sigma_{0}+(\tau+\kappa_0)p^2}=8^{p^2}\varepsilon c(s)c^{p^2}(\tau,\sigma,\tilde{\sigma},\gamma_1,\gamma)N_0^{(\tau+\kappa_0)p^2}\|\tilde{E}_0\|_{\sigma_{0}}<1.
\end{eqnarray*}
For any fixed $p>1$, we derive
\begin{eqnarray}\label{E3-22R}
\lim_{i\longrightarrow\infty}\|E_i\|_{\sigma_i}=0.
\end{eqnarray}
If $\varepsilon\varsigma^{-1}\|u_{i-1}\|_{\sigma_{i-1}}^p\geq1$,
by Lemma 12, (\ref{E2-32}) and (\ref{E2-32rr}), we derive
\begin{eqnarray}\label{E3-18R}
\|u_i\|_{\sigma_i}&=&\|-(L_{a}^{(N_i)})^{-1}E_{i-1}\|_{\sigma_i}\nonumber\\
&\leq&C(\alpha_i)N_i^{\tau+\kappa_0}(1+\varepsilon\varsigma^{-1}\|u_{i-1}\|_{\sigma_{i-1}}^p)^3\|E_{i-1}\|_{\sigma_{i-1}}\nonumber\\
&\leq&2\varepsilon^3\varsigma^{-3}C(\alpha_i)\|u_{i-1}\|_{\sigma_{i-1}}^{3p}\|\tilde{E}_{i-1}\|_{\sigma_{i-1}+\tau+\kappa_0}\nonumber\\
&\leq&(2\varepsilon\varsigma^{-1})^{3(p+1)}C(\alpha_{i})C^{3p}(\alpha_{i-1})\|u_{i-2}\|_{\sigma_{i-2}}^{(3p)^2}\|\tilde{E}_{i-2}\|^{3p}_{\sigma_{i-2}+\tau+\kappa_0}\|\tilde{E}_{i-1}\|_{\sigma_{i-1}+\tau+\kappa_0}\nonumber\\
&\leq&\cdots\nonumber\\
&\leq&(2\varepsilon\varsigma^{-1})^{\sum_{k=0}^{i-1}(3p)^k}\|u_0\|^{(3p)^i}_{\sigma_0}\prod_{k=1}^iC^{(3p)^{k-1}}(\alpha_{i+1-k})\|\tilde{E}_{i-k}\|^{(3p)^{k-1}}_{\sigma_{i-k}+\tau+\kappa_0}.
\end{eqnarray}
But we will choose the initial step $u_0=0$ in this paper, which combining with (\ref{E3-18R}) leads to $\|u_i\|_{\sigma_i}=0$, $\forall i\in\textbf{N}$. This contradicts with assumption $\varepsilon\varsigma^{-1}\|u_{i-1}\|_{\sigma_{i-1}}^p>1$. Hence, the case is not possible.
(\ref{E3-1}) has a solution
\begin{eqnarray*}
u_{\infty}:=\sum_{k=0}^{\infty}u_k\in
\textbf{H}_{\bar{\sigma}}\cap\mathcal{B}_1(0),
\end{eqnarray*}
where $\mathcal{B}_1(0):=\{u|\|u\|_s\leq1,~\forall s>\bar{s}>0\}.$
This completes the proof.
\end{proof}
Next result gives the local uniqueness of solutions for equation (\ref{E3-1}).
\begin{lemma}
Assume that $a$ is diophantine. Equation (\ref{E3-1})
has a unique solution $u\in
\textbf{H}_{\bar{\sigma}}\cap\textbf{B}_1(0)$ obtained in
Lemma 15.
\end{lemma}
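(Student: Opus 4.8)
The plan is to argue by contradiction: suppose $u,v\in\textbf{H}_{\bar\sigma}\cap\textbf{B}_1(0)$ are both solutions of \eqref{E3-1}, with $u=u_\infty$ the solution produced by Lemma 15, and set $w:=u-v$. Subtracting the two equations $L_au=\varepsilon f(x,u)$ and $L_av=\varepsilon f(x,v)$ and inserting/subtracting the linearized term $\varepsilon D_uf(x,v)w$, I would write
\begin{eqnarray*}
L_a w-\varepsilon D_uf(x,v)w=\varepsilon\bigl(f(x,v+w)-f(x,v)-D_uf(x,v)w\bigr).
\end{eqnarray*}
The left-hand side is exactly the full (un-truncated) linearized operator $L_a(\delta,v)$ applied to $w$; the right-hand side is, by the Taylor tame estimate \eqref{E1-8} with $s_0=s=\bar\sigma$, bounded by $c(\bar\sigma)\|w\|_{\bar\sigma}^{p}$ in $\textbf{H}_{\bar\sigma}$-norm. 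So the whole game reduces to an invertibility statement for $L_a(\delta,v)$ together with the superlinear ($p>1$) right-hand side.

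The key step is to obtain a bound $\|L_a(\delta,v)^{-1}\|\leq C$ (in the appropriate pair of Sobolev norms) that holds uniformly for $v$ in the ball $\textbf{B}_1(0)$. I would get this by passing to the limit in Lemma 12: that lemma gives, for every finite $N$, invertibility of the truncated operator $L_a^{(N)}(\delta,v)$ with the estimate \eqref{E3-4}, namely $\|(L_a^{(N)}(\delta,v))^{-1}u\|_{s_1}\leq C(s_2-s_1)N^{\tau+\kappa_0}(1+\varepsilon\varsigma^{-1}\|v\|_{s_2}^p)^3\|u\|_{s_2}$, valid once the $\textbf{L}^2$-bounds \eqref{E8-1} hold along the chain $1\le r\le N$. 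Since $\|v\|_{\bar\sigma}\le 1$ these hypotheses are met (they are the same ones verified in the iteration of Lemma 15), and the usual Nash–Moser argument — combining \eqref{E3-4} with the smoothing properties \eqref{E3-1R2} to absorb the $N^{\tau+\kappa_0}$ loss against a fixed loss of derivatives $s_2-s_1=\tau+\kappa_0$ — yields that $L_a(\delta,v)$ itself is invertible as a map $\textbf{H}_{\bar\sigma+\tau+\kappa_0}\to\textbf{H}_{\bar\sigma}$, with a norm bounded by a constant $C_\ast$ depending only on $\bar\sigma$, $\tau$, $\kappa_0$, $\gamma_1$, $\gamma$ and not on $v$. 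Applying this to the displayed identity gives
\begin{eqnarray*}
\|w\|_{\bar\sigma}\le C_\ast\,\varepsilon\,c(\bar\sigma)\,\|w\|_{\bar\sigma}^{p}.
\end{eqnarray*}
Because $p>1$ and $\|w\|_{\bar\sigma}\le\|u\|_{\bar\sigma}+\|v\|_{\bar\sigma}\le 2$, for $\varepsilon$ small enough the factor $C_\ast\varepsilon c(\bar\sigma)\|w\|_{\bar\sigma}^{p-1}$ is $<1$, forcing $\|w\|_{\bar\sigma}=0$, i.e. $u=v$.

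The main obstacle is the uniform invertibility of the \emph{full} operator $L_a(\delta,v)$ for an arbitrary competitor $v$ in the ball, rather than merely along the specific sequence $\sum_{k\le i}u_k$ built in the iteration: one has to be sure that the ``small divisor'' separation underlying Lemma 12 — condition \eqref{E5-1R} and the chain of $\textbf{L}^2$ estimates \eqref{E8-1} — is genuinely a property of $\|v\|_{\bar\sigma}\le1$ and the Cantor set of parameters $a$, not of the particular approximate solution. A secondary technical point is the passage $N\to\infty$: the factor $N^{\tau+\kappa_0}$ in \eqref{E3-4} must be controlled, which is why one works between $\textbf{H}_{\bar\sigma+\tau+\kappa_0}$ and $\textbf{H}_{\bar\sigma}$ and uses \eqref{E3-1R2}; since $v$ is only assumed in $\textbf{H}_{\bar\sigma}$, one also needs the higher norms $\|v\|_{s_2}$ to be finite, which follows because, like $u_\infty$, any solution in $\textbf{B}_1(0)$ is smooth (the equation \eqref{E3-1} bootstraps regularity, as $-\triangle+1+\varepsilon a\triangle^\varrho$ is elliptic of order $2\varrho$ and $f\in\textbf{C}^k$). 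Once these are in place the contraction estimate above closes the argument.
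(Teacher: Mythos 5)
Your reduction of uniqueness to the identity $L_aw-\varepsilon D_uf(x,v)w=\varepsilon\bigl(f(x,v+w)-f(x,v)-D_uf(x,v)w\bigr)$ plus the Taylor tame estimate (\ref{E1-8}) is fine as far as it goes, but the step you yourself flag as the ``main obstacle'' is a genuine gap, and it is not resolvable the way you suggest. Lemma 12 only controls the \emph{truncated} operators $L_a^{(N)}$, and the bound (\ref{E3-4}) carries the factor $N^{\tau+\kappa_0}$ as an honest divergence in $N$, not as a loss of derivatives: the smoothing properties (\ref{E3-1R2}) let you trade powers of $N$ for derivatives only on the tail $(I-\Pi^{(N)})u$, and there is no refined statement in the paper (e.g.\ a bound of the form $\|(L_a^{(N)})^{-1}u\|_{s_1}\leq C\|u\|_{s_1+\tau+\kappa_0}$ with $C$ independent of $N$) that would let you ``pass to the limit'' and obtain invertibility of the full operator $L_a-\varepsilon D_uf(x,v)$ between fixed Sobolev spaces. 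This is precisely the reason the whole construction is a Nash--Moser scheme rather than a direct implicit-function/contraction argument. Moreover, your claim that the hypotheses of Lemma 12 ``are met'' because $\|v\|_{\bar\sigma}\leq1$ is incorrect: (\ref{E5-1R}) and the chain of $\textbf{L}^2$ bounds (\ref{E8-1}) for $1\leq r\leq N$ are restrictions on the parameter (the Cantor set) that depend on the function $q$ at which one linearizes, not on its norm alone; in the paper they are propagated along the iteration only through the measure estimates of Lemma 17, which require the two linearization points to be $N^{-e}$-close. For an arbitrary competitor $v\in\textbf{B}_1(0)$ you have no such control, so the uniform constant $C_\ast$ in your contraction inequality is not available. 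The auxiliary bootstrap claim (that any solution in the ball is smooth, so $\|v\|_{s_2}<\infty$) is also extraneous to the paper and limited by $f\in\textbf{C}^k$, but it is secondary to the invertibility issue.

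The paper's own proof avoids the full operator entirely. It writes $h=u-\tilde u$ through the \emph{truncated} inverse as in (\ref{E3-24}), applies Lemma 12 at each scale along the decreasing sequence $\sigma_i\downarrow\bar\sigma$ from (\ref{E3-16})--(\ref{E3-17}), and uses the superlinear exponent $p>1$ in (\ref{E1-8}) to beat the accumulated factors $N_i^{\tau+\kappa_0}C(\alpha_i)$: iterating gives $\|h\|_{\sigma_i}\leq\bigl(\mathrm{const}\cdot N_0^{(\tau+\kappa_0)p}\|h\|_{\sigma_0}\bigr)^{p^i}$, which tends to zero once the radius $\delta$ of $\textbf{B}_1(0)$ is chosen smaller than that constant's inverse. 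Note also that the smallness there is in the radius $\delta$ of the ball (local uniqueness), not in $\varepsilon$ as in your final step. If you want to salvage your approach you would have to either prove a tame, $N$-uniform estimate for $(L_a-\varepsilon D_uf(x,v))^{-1}$ valid on the Cantor set for every $v$ in the ball --- which is a substantially stronger statement than Lemma 12 --- or fall back on the scale-by-scale iteration the paper uses.
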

\begin{proof}
Let $u,\tilde{u}\in
\textbf{H}_{\bar{\sigma}}\cap\textbf{B}_1(0)$ be two solutions of
system (\ref{E3-6}), where
\begin{eqnarray*}
\textbf{B}_1(0):=\{u|\|u\|_s<\delta,~for~some~\delta<1,~\forall s>\sigma_{0}\}.
\end{eqnarray*}
Write $h=u-\tilde{u}$. Our target is to prove $h=0$.
By (\ref{E3-6}), we have
\begin{eqnarray*}
L_{a}h-\varepsilon\Pi^{(N_i)}D_uf(x,u)h-\varepsilon\Pi^{(N_i)}
(f(x,u)-f(x,\tilde{u})-D_uf(x,u)h=0,
\end{eqnarray*}
which implies that
\begin{eqnarray}
\label{E3-24}
h=\varepsilon(L_{a}-\varepsilon\Pi^{(N_i)}D_uf(x,u))^{-1}\Pi^{(N_i)}(f(x,u)-f(x,\tilde{u})-D_uf(x,u)h).
\end{eqnarray}
Note that $N_i=N_0^i$, $\forall i\in\textbf{N}$. Thus, by Lemma 12 and (\ref{E3-24}), we have
\begin{eqnarray*}
\|h\|_{\sigma_i}&=&\varepsilon\|(L^{N_i}_{a})^{-1}\Pi^{(N_i)}(f(x,u)-f(x,\tilde{u})-D_uf(x,u)h)\|_{\sigma_i}\nonumber\\
&\leq&C(\alpha_i)N_i^{\tau+\kappa_0}(1+\varepsilon\varsigma^{-1}\|u\|_{\sigma_{i-1}}^p)\|h\|^p_{\sigma_{i-1}}\nonumber\\
&\leq&2^{p+1}N_i^{(\tau+\kappa_0)}N_{i-1}^{(\tau+\kappa_0)p}C(\alpha_{i})C^{p}(\alpha_{i-1})\|h\|_{\sigma_{i-2}}^{p}\nonumber\\
&\leq&\cdots\nonumber\\
&\leq&2^{\sum_{k=0}^{i-1}p^k}N_0^{(\tau+\kappa_0)(\sum_{k=0}^{i-1}p^k)}\|h\|^{p^i}_{\sigma_0}\prod_{k=1}^iC^{p^{k-1}}(\alpha_{i+1-k})\nonumber\\
&\leq&(8^{p^2}c^{p^2}(\varepsilon,\varsigma,\tau,s,\tilde{s},\gamma_1,\gamma)N_0^{(\tau+\kappa_0)p}\|h\|_{\sigma_{0}})^{p^i}.
\end{eqnarray*}
Choosing $\delta<8^{-p^2}c^{-p^2}(\varepsilon,\varsigma,\tau,s,\tilde{s},\gamma_1,\gamma)N_0^{-(\tau+\kappa_0)p}$, we obtain
\begin{eqnarray*}
\lim_{i\longrightarrow\infty}\|h\|_{\bar{\sigma}}=0.
\end{eqnarray*}
This completes the proof.
\end{proof}
\begin{remark}
The dependence upon the parameter, as is well known, is more delicated
since it involves in the small divisors of $\omega_j$: it is, however, standard to check
that this dependence is $\textbf{C}^1$ on a bounded set of
Diophantine numbers, for more details, see, for example, \cite{Berti2,Berti1}.
\end{remark}

By Lemma 12, for sufficient small $\delta_0>0$ and given $r>0$, we define
\begin{eqnarray*}
&&Y_{\gamma_1,\kappa_0}^{(N)}:=\{(\delta,q)\in[0,\delta_0)\times\textbf{H}^{(N)}|\|q\|_{\bar{\sigma}}\leq 1,\varepsilon{\delta}~satisfies~(\ref{E5-1R})-(\ref{E8-1})\},\\
&&U_r^{(N)}:=\{u\in\textbf{C}^1([0,\delta_0),\textbf{H}^{N})|\|u\|_{\bar{\sigma}}\leq 1,~\|\partial_{\delta}u\|_{\bar{\sigma}}\leq r\},\\
&&\mathcal{G}_{\gamma_1,\kappa_0}^{(N)}:=\{\delta\in[0,\delta_0)|(\delta,u(\delta))\in Y_{\gamma_1,\kappa_0}^{(N)}~and~u\in U_r^{(N)}\},\\
&&\mathcal{G}_r:=\{\delta\in[0,\delta_0)|\|(L_a^{(r)}(\delta,q(\delta)))^{-1}\|\leq \frac{4r^{\kappa}}{\gamma_1}\},\\
&&\mathcal{G}:=\{\delta\in[0,\delta_0)|\omega(\delta)~satisfies~(\ref{E5-1R})\}.
\end{eqnarray*}
Then for a given function $\delta\mapsto q(\delta)\in U_r^{(N)}$, the set $\mathcal{G}_{\gamma_1,\kappa_0}^{(N)}$ is equivalent to
\begin{eqnarray*}
\mathcal{G}_{\gamma_1,\kappa_0}^{(N)}=\cap_{1\leq r\leq N}\mathcal{G}_r\cap\mathcal{G}.
\end{eqnarray*}
Choosing $\kappa$ and $\gamma_1$ such that
\begin{eqnarray}\label{E8-2}
\kappa\geq\max\{\tau,2+d+n+\frac{2\rho-2}{2\rho-1}(\tau+2\varrho)\},~~\gamma_1\in(0,\gamma_2],~for~\gamma_2\leq\gamma_1.
\end{eqnarray}
Next we have the measure estimate. The proof of it will be given in Appendix.
\begin{lemma}(Measure estimates)
Assume that $\varepsilon$ is diophantine, $\varepsilon_0\gamma^{-1}M^{\tau+2\varrho}$ is sufficient small and (\ref{E8-2}) holds. Then $\mathcal{G}_{\gamma_1,\kappa_0}^{(M)}(0)=\mathcal{G}$, and $\mathcal{G}$ satisfies
\begin{eqnarray}\label{E8-3}
|(\mathcal{G}_{\gamma_1,\kappa_0}^{(M)}(0))^c\cap[0,\delta)|\leq C\gamma_1\delta,~~\forall \delta\in(0,\delta_0].
\end{eqnarray}
Furthermore, for any $r'>0$, there exists $\delta':=\delta'(\gamma_1,r')$ such that the measure estimate
\begin{eqnarray}\label{E8-4}
|(\mathcal{G}_{\gamma_1,\kappa_0}^{(N')}(u_2))^c\backslash(\mathcal{G}_{\gamma_1,\kappa_0}^{(N)}(u_1))^c\cap[0,\delta)|\leq C\gamma_1\delta N^{-1},~~\forall \delta\in(0,\delta']
\end{eqnarray}
holds, where $N'\geq N\geq M$, $u_1\in U_{r'}^{(N)}$, $u_1\in U_{r'}^{(N)}$ with $\|u_2-u_1\|_{\bar{\sigma}}\leq N^{-e}$, $e$ denotes a constant depending on $\kappa_0$ and $n$.
\end{lemma}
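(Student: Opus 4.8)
The plan is to reduce everything to an elementary measure count for sublevel sets of the eigenvalues, using the decomposition $\mathcal{G}^{(N)}_{\gamma_1,\kappa_0}=\bigcap_{1\le r\le N}\mathcal{G}_r\cap\mathcal{G}$, so that $\big(\mathcal{G}^{(N)}_{\gamma_1,\kappa_0}\big)^c=\bigcup_{1\le r\le N}\mathcal{G}_r^c\cup\mathcal{G}^c$ and it suffices to bound each piece. First I would establish $\mathcal{G}^{(M)}_{\gamma_1,\kappa_0}(0)=\mathcal{G}$: at the base point $u_0=0$ one has $\partial_uf(x,0)=\cdots=\partial_u^{p-1}f(x,0)=0$, so $\varepsilon D_uf(\delta,0)$ is $O(\delta^{p})$ and $L_a^{(r)}(\delta,0)$ coincides with the block-diagonal operator $-\triangle+1+\varepsilon a\triangle^{\varrho}$ up to this negligible perturbation; on the set $\mathcal{G}$ where (\ref{E5-1R}) holds, Lemma 2 and the Diophantine hypothesis on $a$, combined with the smallness of $\varepsilon_0\gamma^{-1}M^{\tau+2\varrho}$, yield the first Melnikov bound $|\omega_j^2+1-\varepsilon a\,\omega_j^{2\varrho}|\ge\gamma|j+\overrightarrow{\rho}|^{-\tau}$, whence $\|(L_a^{(r)}(\delta,0))^{-1}\|_0\le 2r^{\tau}/\gamma\le 4r^{\kappa}/\gamma_1$ for every $r\le M$ (using $\kappa\ge\tau$ from (\ref{E8-2})). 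Hence $\mathcal{G}\subseteq\mathcal{G}_r$ for all such $r$, which gives the claimed identity.

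For the measure of $\mathcal{G}^c$ I would argue frequency by frequency. Fix an index $j$ and set $\mu_j(\delta):=\omega_j^2+1-\delta^{p-1}a\,\omega_j^{2\varrho}$; this is monotone in $\delta$ on $[0,\delta_0)$, so each resonant zone $R_j(\eta):=\{\delta\in[0,\delta_0):|\mu_j(\delta)|<\eta\}$ is an interval. When $|j+\overrightarrow{\rho}|$ is small, $\mu_j(\delta)\ge 1-C\delta_0^{p-1}\ge 1/2$ and $R_j(\eta)=\emptyset$; otherwise the zone sits near $\delta^{p-1}\sim(\omega_j^2+1)/(a\,\omega_j^{2\varrho})$, which is bounded away from $0$, and there $|\partial_\delta\mu_j(\delta)|=(p-1)\delta^{p-2}a\,\omega_j^{2\varrho}\ge c\,a\,\omega_j^{2\varrho}$, so $|R_j(\eta)|\le C\eta\,(a\,\omega_j^{2\varrho})^{-1}$. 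A perturbation argument resting on the block structure and the invertibility estimate of Lemma 12 (the diagonal part dominates the $\varepsilon\Pi^{(N)}D_uf$ term) then gives $\mathcal{G}_r^c\subseteq\bigcup_{|j+\overrightarrow{\rho}|\le r}R_j\big(C\gamma_1 r^{-\kappa}\big)$ and, with $u_0=0$, the analogous inclusion for $\mathcal{G}^c$.

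The remaining step is bookkeeping. The number of indices $j$ with $|j+\overrightarrow{\rho}|\le r$ is at most $Cr^{r+n}$, each eigenspace has dimension $d_j\le|j+\overrightarrow{\rho}|^{d-r}$, and the resonant shell on which $\mu_j$ can vanish is concentrated near $\omega_j^{2}\sim\varepsilon^{-1/(\varrho-1)}$; summing $\sum_{|j+\overrightarrow{\rho}|\le r}d_j\,|R_j(C\gamma_1 r^{-\kappa})|$ over that shell and optimizing produces $|\mathcal{G}_r^c\cap[0,\delta)|\le C\gamma_1\delta\, r^{\,d+n+\frac{2\varrho-2}{2\varrho-1}(\tau+2\varrho)-\kappa}$. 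By the choice of $\kappa$ in (\ref{E8-2}) this exponent is $\le-2$, hence $\sum_{1\le r\le N}|\mathcal{G}_r^c\cap[0,\delta)|\le C\gamma_1\delta\sum_{r\ge1}r^{-2}\le C\gamma_1\delta$, and combined with the first paragraph this proves (\ref{E8-3}).

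Finally, for the variation estimate (\ref{E8-4}) I would show that any $\delta\in\big(\mathcal{G}^{(N')}_{\gamma_1,\kappa_0}(u_2)\big)^c\backslash\big(\mathcal{G}^{(N)}_{\gamma_1,\kappa_0}(u_1)\big)^c$ must trigger a small divisor at a site $|j+\overrightarrow{\rho}|>N$. Since $u_1,u_2\in U_{r'}^{(N)}$ with $\|u_2-u_1\|_{\bar\sigma}\le N^{-e}$, the Lipschitz dependence of the linearized operator on $u$ (via Lemmas 9--12) bounds the corresponding eigenvalue shift by $C\varepsilon N^{-e}$, which for $e$ large relative to $\kappa_0$ and $n$ lies far below the threshold $\gamma_1 N^{-\kappa}$, so no resonance is created or destroyed at a site $\le N$; the ``new'' bad set is therefore contained in $\bigcup_{N<|j+\overrightarrow{\rho}|\le N'}R_j\big(C\gamma_1|j+\overrightarrow{\rho}|^{-\kappa}\big)$. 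Rerunning the summation of the previous paragraph but restricted to $|j+\overrightarrow{\rho}|>N$ gains a factor $N^{-(\kappa-d-n-\cdots)}\le N^{-1}$, and taking $\delta'=\delta'(\gamma_1,r')$ small enough to absorb the constants coming from $U_{r'}^{(N)}$ yields (\ref{E8-4}). I expect this last step to be the real obstacle: one must control, uniformly in $N\le N'$, how the set of resonant $\delta$ deforms under a change of base point, which forces both the transversality bound on $\partial_\delta\mu_j$ in the critical shell and the correct calibration of the exponent $e$ in $\|u_2-u_1\|_{\bar\sigma}\le N^{-e}$.
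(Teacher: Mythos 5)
Your first paragraph (the identity $\mathcal{G}_{\gamma_1,\kappa_0}^{(M)}(0)=\mathcal{G}$ via the Melnikov bound (\ref{E1-5}) and the smallness of $\varepsilon_0\gamma^{-1}M^{\tau+2\varrho}$) is essentially the paper's argument, and your per-frequency resonance-zone count for (\ref{E8-3}) is an acceptable, more explicit version of what the paper dismisses as ``standard''. The genuine gap is in your treatment of (\ref{E8-4}). You claim that because the eigenvalue shift $\|L_a^{(r)}(\delta,u_2)-L_a^{(r)}(\delta,u_1)\|_0=O(\varepsilon N^{-e})$ is far below the threshold $\gamma_1 r^{-\kappa}$, ``no resonance is created or destroyed at a site $\leq N$'', and hence the new bad set lives only at sites $|j+\overrightarrow{\rho}|>N$. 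This is false: the sets $\mathcal{G}_r(u_1)$ and $\mathcal{G}_r(u_2)$ are cut by the \emph{same} sharp threshold $4r^{\kappa}/\gamma_1$ in (\ref{E8-1}), so a tiny shift can still push an eigenvalue across it. What the smallness of the shift actually gives is only that for $\delta\in\mathcal{G}_r^c(u_2)\cap\mathcal{G}_r(u_1)$ some eigenvalue of $L_a^{(r)}(\delta,u_1)$ must lie in a thin band of width $C\varepsilon N^{-e}$ around the threshold; these intersections for $r\leq N$ are nonempty in general and are precisely the dominant objects the paper estimates.

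Consequently your argument omits the core of the paper's proof: the measure of $\mathcal{G}_r^c(u_2)\cap\mathcal{G}_r(u_1)$ for $N_{\varepsilon}<r\leq N$ must be bounded by a transversality argument for the eigenvalues $\lambda_k(\delta,u_1)$ of the \emph{full perturbed} operator $L_a^{(r)}(\delta,u_1)$, not just for the diagonal symbol $\mu_j(\delta)$. The paper does this via the variational bound $\partial_\delta\lambda_k\leq\max_{h\in E_{\delta,k},\|h\|_0=1}\bigl((\partial_\delta L^{(r)})h,h\bigr)_0\leq-2(\rho-1)\delta_1^{2\rho-2}$ on a dyadic interval $[\tfrac{\delta_1}{2},\delta_1]$ (the dyadic decomposition is needed because the transversality constant degenerates as $\delta\to0$, and bounding the $\partial_\delta$ of the nonlinear block uses $u_1\in U_{r'}^{(N)}$, which is where $\delta'(\gamma_1,r')$ enters), leading to the band estimate (\ref{EAA1}) and then to $|\mathcal{G}_r^c(u_2)\cap\mathcal{G}_r(u_1)|\leq C\delta_1 N^{-e}r^{d+n+1}$; summation over $r\leq N$ with $e\geq d+n+3$, plus the tail $\sum_{r>\max\{N,N_\varepsilon\}}|\mathcal{G}_r^c(u_2)|$ controlled by the same band estimate with $|I|\sim\gamma_1 r^{-\tau}$ and the exponent choice (\ref{E8-2}), produces the factor $N^{-1}$. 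Your final sentence shows you sensed that this deformation/transversality control is the crux, but as written your proof drops the $r\leq N$ contribution entirely, so it does not establish (\ref{E8-4}).
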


\section{Proof of Lemma 12}
This section is devoted to give the proof of Lemma 12.  Let
\begin{eqnarray*}
b(x):=(\partial_uf)(\delta,u).
\end{eqnarray*}
For notational convenience, we denote $N=N_i$. Due to the orthogonal decomposition $\textbf{H}^{(N)}=\bigoplus_{j\in J_N^+}\mathcal{N}_j$, we define
\begin{eqnarray}\label{E4-1}
h\mapsto\textbf{L}^{(N)}[h]:=\Pi^{(N)}(L_ah-\varepsilon b(x)h),~~\forall h\in\textbf{H}^{(N)}.
\end{eqnarray}
We write (\ref{E4-1}) by the block matrix
\begin{eqnarray}\label{E4-3}
L^{(N)}_a=D+\varepsilon T,~~D:=diag_{j\in J^+_N}(D_jI_j),
\end{eqnarray}
where $j=(j_1,j_2)\in\Lambda^+\times\textbf{Z}^n$,
\begin{eqnarray}\label{E4-5}
D_j:=|j_1+\rho|^2-|\rho|^2+|j_2|^2+1-\varepsilon a(|j_1+\rho|^2-|\rho|^2+|j_2|^2)^{\varrho},
\end{eqnarray}
$I_j$ is the identity map in $\mathcal{N}_j$, and
\begin{eqnarray}\label{E4-7}
T:=(T^{j'}_j)_{j,j'\in J^+_N},~~T^{j'}_j:=\Pi_{\mathcal{N}_j}L_a^{(N)}|_{\mathcal{N}_{j'}}\in\mathcal{L}(\mathcal{N}_{j'},\mathcal{N}_{j}).
\end{eqnarray}
In what follows, we prove the estimate (\ref{E3-4}).
For fixing $\varsigma>0$, we define the regular sites $R$ and the singular sites $S$ as
\begin{eqnarray}\label{E4-9}
R:=\{j\in J^+||D_j|\geq\varsigma\}~~and~~S:=\{j\in J^+||D_j|<\varsigma\}.
\end{eqnarray}
For each $N$, we denote the restrictions of $S$, $R$, $\Omega_{\alpha}$ to $J_N^+$ with the same symbols.
The following result shows the separation of singular sites, and the proof will be completed in the appendix.
\begin{lemma}
Assume that $a$ is diophantine and $a$ satisfies (\ref{E5-1R}). There exists $\varsigma_0(\gamma)$ such that
for $\varsigma\in(0,\varsigma_0(\gamma)]$ and a partition of the singular sites $S$ which can be partitioned in pairwise disjoint clusters $\Omega_{\alpha}$ as
\begin{eqnarray}\label{E4-10}
S=\bigcup_{\alpha\in\textbf{N}}\Omega_{\alpha}
\end{eqnarray}
satisfying

$\bullet$ (dyadic) $\forall\alpha$, $M_{\alpha}\leq2m_{\alpha}$, where $M_{\alpha}:=\max_{j\in\Omega_{\alpha}}|j+\overrightarrow{\rho}|$, $m_{\alpha}:=\max_{j\in\Omega_{\alpha}}|j+\overrightarrow{\rho}|$.

$\bullet$ (separation) $\exists \lambda, c>0$ such that $d(\Omega_{\alpha},\Omega_{\beta})\geq c(M_{\alpha}+M_{\beta})^{\lambda}$, $\forall\alpha\neq\beta$, where $d(\Omega_{\alpha},\Omega_{\beta}):=\max_{j\in\Omega_{\alpha},j'\in\Omega_{\beta}}|j-j'|$ and $\lambda$ depends only on $\mathcal{M}$.
\end{lemma}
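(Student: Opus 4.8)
The plan is to show that, as soon as $\varsigma$ is taken small enough, the singular set $S$ collapses onto a \emph{single sphere} in frequency space, after which the desired partition is essentially automatic.

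First I would observe that $D_j$ depends on $j$ only through the single nonnegative scalar $\mu_j:=|j+\overrightarrow{\rho}|^2-|\overrightarrow{\rho}|^2=|j_1+\rho|^2-|\rho|^2+|j_2|^2$: indeed $D_j=g(\mu_j)$ with $g(t):=t+1-\varepsilon a t^{\varrho}$. For $\varrho\geq 2$ the derivative $g'(t)=1-\varrho\varepsilon a t^{\varrho-1}$ is strictly decreasing, so $g$ increases on $[0,t_c]$ and decreases on $[t_c,\infty)$, where $t_c:=(\varrho\varepsilon a)^{-1/(\varrho-1)}$; since $g(0)=1>0$, $g(t)\to-\infty$, and $g(t_c)=1+\tfrac{\varrho-1}{\varrho}t_c>1$, the function $g$ has a unique positive root $t_*>t_c$. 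At $t_*$ one has $\varepsilon a t_*^{\varrho-1}=1+t_*^{-1}$, hence $g'(t_*)=-(\varrho-1)-\varrho t_*^{-1}$, and $|g'(t)|\geq\varrho-1\geq1$ for all $t\geq t_*$. Consequently, for every $\varsigma\in(0,1)$ the sublevel set $\{t\geq 0:|g(t)|<\varsigma\}$ is disjoint from $[0,t_c]$ (where $g\geq1$) and lies in a neighbourhood of $t_*$ on which $g$ is strictly monotone; a short estimate using $g''$ shows it is an interval $I_\varsigma\ni t_*$ with $|I_\varsigma|\leq 4\varsigma/(\varrho-1)$, so $|I_\varsigma|\to0$ as $\varsigma\to0$, uniformly in $\varepsilon$ small. (If $\varrho=1$ and $\varepsilon a<1$ then $g\geq1$ on $[0,\infty)$, so $S=\emptyset$ and there is nothing to prove.)

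Next I would bring in the arithmetic of the spectrum. By Lemma 2, $|j_1+\rho|^2\in D^{-1}\textbf{Z}$ and $|j_2|^2\in\textbf{Z}$, hence $\mu_j\in D^{-1}\textbf{Z}$ for every $j\in J^+$. I would then fix $\varsigma_0=\varsigma_0(\gamma)$ small enough that $|I_{\varsigma_0}|<D^{-1}$ (and $\varsigma_0\leq\min(1,\gamma)$, the latter being what is used to control the regular sites in Section 3; note $D=D(G)$ also depends on $\mathcal{M}$). For $\varsigma\in(0,\varsigma_0]$ every singular site has $\mu_j\in I_\varsigma$, an interval of length $<D^{-1}$ containing at most one point of $D^{-1}\textbf{Z}$; hence there is a single value $\mu_*$ with $\mu_j=\mu_*$ for all $j\in S$. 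Writing $M_*:=(\mu_*+|\overrightarrow{\rho}|^2)^{1/2}$, this says exactly that every singular site lies on the sphere $\{\,j:|j+\overrightarrow{\rho}|=M_*\,\}$; in particular $S$ is finite. It remains to partition $S$: fix any exponent $\lambda\in(0,1)$ — its precise value is pinned down later by the requirements of Lemma 12, and nothing here depends on the choice — together with a small constant $c>0$, and set $\theta:=c(2M_*)^{\lambda}$. Declare $j,j'\in S$ equivalent if joined by a finite chain $j=p_0,p_1,\dots,p_L=j'$ in $S$ with $|p_i-p_{i+1}|<\theta$ for all $i$, and let the clusters $\Omega_\alpha$ be the equivalence classes; they are pairwise disjoint with $\bigcup_\alpha\Omega_\alpha=S$. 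The dyadic property is immediate, since every point of $S$ has modulus exactly $M_*$: thus $M_\alpha=m_\alpha=M_*$ for each cluster and $M_\alpha\leq2m_\alpha$ holds with room to spare. For separation, if $j\in\Omega_\alpha$, $j'\in\Omega_\beta$ with $\alpha\neq\beta$, then $|j-j'|\geq\theta$ (otherwise $j,j'$ would lie in the same class), whence $d(\Omega_\alpha,\Omega_\beta)\geq\theta=c\,2^{\lambda}M_*^{\lambda}=c(M_\alpha+M_\beta)^{\lambda}$, regardless of whether $d$ is read as the maximum or the minimum of $|j-j'|$ over the two clusters. Since $M_*$ is one global quantity, $\theta$ is global, the relation is well defined, and $\lambda$ is a fixed exponent.

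The heart of the matter is the reduction to a single sphere, carried out in the first two paragraphs; the two ingredients pull in opposite directions. The analysis of $g$ (single-humped, with an arbitrarily large maximum for $\varepsilon$ small and $\varrho\geq2$, and slope of size $\geq\varrho-1$ near its root) confines $\mu_j$ to an interval of length $O(\varsigma)$, while the quantization $\mu_j\in D^{-1}\textbf{Z}$ from Lemma 2 provides a fixed gap $D^{-1}$; choosing $\varsigma_0$ below that gap forces all singular values of $\mu_j$ to coincide. Once this is in hand the clustering is soft, and the only places requiring (trivial) care are the degenerate cases $\varrho=1$ or $I_\varsigma\cap D^{-1}\textbf{Z}=\emptyset$, where $S=\emptyset$ and the statement is vacuous.
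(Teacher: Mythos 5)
Your proposal is correct, but it takes a genuinely different and much more elementary route than the paper. The paper proves this lemma in the appendix by following the Berti--Procesi scheme: it introduces the quadratic form $\Phi_{\varepsilon}(x)=|J|^2-\varepsilon a|J|^{2\varrho}$ on a doubled index space, calls a vector weakly singular when $|\Phi_{\varepsilon}|$ is bounded, and reduces the clustering to Theorem 8, a bound $K\leq B^{C}\gamma^{-p}$ on the length of $B$-chains of weakly singular vectors; the only new ingredient there is Lemma 25, a lower bound on $\det A_{\varepsilon}$ with $A_{\varepsilon}=R-\varepsilon aS$ coming from the diophantine conditions (\ref{E1-4}) and (\ref{E5-1R}), after which the dyadic and separation properties are imported from \cite{Berti1}. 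You instead exploit the radial structure specific to this elliptic problem: $D_j=g(\mu_j)$ with $g(t)=t+1-\varepsilon a t^{\varrho}$ and $\mu_j=|j+\overrightarrow{\rho}|^2-|\rho|^2$, so the small divisor depends on a single scalar which, by Lemma 2, is quantized in $D^{-1}\textbf{Z}$; since $|g(t)|<\varsigma\leq1$ forces $\varepsilon a t^{\varrho-1}\geq1$ and hence $g'(t)\leq1-\varrho$, the sublevel set is one interval of length at most $2\varsigma/(\varrho-1)$ uniformly in $\varepsilon$ (this mean-value observation is the cleanest way to close your slightly hand-waved appeal to $g''$), so for $\varsigma_0<(\varrho-1)/(2D)$ all singular sites share one value of $\mu_j$, $S$ lies on a single sphere, and the partition with the dyadic and separation properties becomes trivial, the degenerate cases $\varrho=1$ or $S=\emptyset$ being vacuous. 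What your argument buys is brevity and the insight that, for this purely spatial problem, the clustering uses only the spectral quantization and not the diophantine nature of $a$; what the paper's chain argument buys is robustness, since it would survive in situations (such as the time-periodic wave and Schr\"odinger problems of \cite{Berti1}) where the small divisor depends on more than one integer parameter and the singular sites form genuinely unbounded sets, and it delivers the quantitative chain/separation structure in exactly the form the authors reuse from that reference.
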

Using Lemma 10, we have the following.
\begin{lemma}
Let $2s'\geq d+r+2n+3$. For a real $b\in\textbf{H}_{s+s'}$, the matrix $T=(T_j^{j'})_{j,j'\in J^+_N}$ defined in (\ref{E4-7}) is self-adjoint and belongs to the algebra of polynomially localized matrices $\mathcal{A}_{s}(J_N^+)$ with
\begin{eqnarray*}
|T|_s\leq K(s)\|b\|_{s+s'}.
\end{eqnarray*}
Moreover, for any $s>s'$,
\begin{eqnarray*}
|T|_s\leq K'(s)N^{s'}\|b\|_{s}.
\end{eqnarray*}
\end{lemma}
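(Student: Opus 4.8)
The plan is to reduce everything to Lemma 10, which controls the multiplication operator $u \mapsto b(x) u$ in the polynomially localized algebra $\mathcal{A}_s$. The key observation is that $T$, as defined in \eqref{E4-7}, is nothing but the restriction to $\textbf{H}^{(N)}$ of the matrix $B = (B_j^{j'})$ representing multiplication by the real function $b(x) = (\partial_u f)(\delta, u)$; indeed $L_a^{(N)} = D + \varepsilon T$ with $D$ diagonal, so the off-diagonal content of $L_a^{(N)}$ (and in fact all of $T$) comes from the $-\varepsilon b(x)h$ term in \eqref{E4-1}, i.e. $T_j^{j'} = -\Pi_{\mathcal{N}_j} b(x)|_{\mathcal{N}_{j'}}$ up to the trivial contribution of $L_a$ which is already diagonal. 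First I would make this identification precise: by Lemma 6 (product rule on eigenspaces) the multiplication operator maps $\mathcal{N}_{j'}$ into $\bigoplus_{\bar j \in D(j,j')} \mathcal{N}_{\bar j}$, hence $B \in \mathcal{A}_s$, and self-adjointness of $B$ (hence of $T$, hence of its restriction, using Lemma 11) follows because $b$ is real, exactly as in Lemma 9. Then Lemma 11 guarantees that passing to the restricted matrix on the index set $J_N^+$ does not increase the $|\cdot|_s$ norm, so $|T|_s \le |B|_s$.

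For the first bound, $|T|_s \le K(s)\|b\|_{s+s'}$ with $2s' \ge d+r+2n+3$, I would simply invoke Lemma 10 applied to $b$: it states precisely $|B|_s \le K(s)\|b\|_{s+s'}$ under this hypothesis on $s'$, and combining with $|T|_s \le |B|_s$ finishes this part.

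For the second bound, $|T|_s \le K'(s) N^{s'} \|b\|_s$ for $s > s'$, the idea is to trade regularity for a polynomial factor in $N$ using the smoothing estimates \eqref{E3-1R2}. Since $T$ lives on $\textbf{H}^{(N)}$, only Fourier modes with $|j + \overrightarrow{\rho}| \le N$ and $|j' + \overrightarrow{\rho}| \le N$ enter; so I would write $T$ as the restriction of the multiplication operator by $\Pi^{(N)} b$ (or bound the entries $\|T_j^{j'}\|_0$ directly via Lemma 9, which gives $\|B_j^{j'}\|_0 \le c(s)\|b\|_{\tilde s} e^{-(\tilde s - \frac{d+n+1}{2})|j-j'|}$ for any admissible $\tilde s$), and then estimate $\|\Pi^{(N)} b\|_{s+s'} \le N^{s'} \|b\|_s$ by the first smoothing inequality in \eqref{E3-1R2}. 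Feeding this into the first bound of the lemma yields $|T|_s \le K(s)\|\Pi^{(N)}b\|_{s+s'} \le K(s) N^{s'}\|b\|_s$, which is the claim with $K'(s) = K(s)$. One has to be slightly careful that the cutoff on $b$ does not change $T$ on $J_N^+$ — but it does not, since the matrix entry $B_j^{j'}$ only depends on the Fourier components $b_{\bar j}$ with $\bar j$ in the (finite) difference set $D(j,j')$, all of which have $|\bar j + \overrightarrow{\rho}| \lesssim N$ when $j, j' \in J_N^+$, so replacing $b$ by $\Pi^{(cN)} b$ for a suitable constant leaves $T$ untouched.

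The main obstacle I anticipate is purely bookkeeping: matching the index-set conventions for $D(j,j')$, $\overrightarrow{\rho}$, and the ``$|j+\overrightarrow{\rho}|$'' weight used in the $\textbf{H}_s$ norm versus the ``$|j-j'|$'' weight used in $\mathcal{A}_s$, and checking that the truncation of $b$ is harmless at the level of the restricted matrix. Once the identification $T = -(\text{restricted multiplication by } b)$ is set up cleanly, both inequalities are immediate corollaries of Lemmas 9--11 together with the smoothing property \eqref{E3-1R2}; no genuinely new estimate is required.
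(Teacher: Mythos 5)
Your proposal is correct and follows exactly the route the paper intends: the paper gives no proof of this lemma beyond the remark ``Using Lemma 10,'' and your argument — identifying $T$ with the restriction to $J_N^+$ of the multiplication matrix $B$ for the real function $b=(\partial_u f)(\delta,u)$, invoking Lemma 10 for $|B|_s\leq K(s)\|b\|_{s+s'}$ and Lemma 11 for $|T|_s\leq|B|_s$, and obtaining the second bound by truncating $b$ and applying the smoothing estimate (\ref{E3-1R2}) — is precisely the standard completion of that remark. Your caveat about verifying that the truncation $\Pi^{(cN)}b$ leaves the restricted matrix unchanged (via the finiteness of the Clebsch--Gordan decomposition in Lemmas 3 and 6) is the only point requiring care, and you have identified it correctly.
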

Since the decomposition
\begin{eqnarray*}
\textbf{H}^{(N)}:=\textbf{H}_R\oplus\textbf{H}_S,
\end{eqnarray*}
with
\begin{eqnarray*}
\textbf{H}_R:=\bigoplus_{j\in R\cap J^+_N}\mathcal{N}_j,~~\textbf{H}_S:=\bigoplus_{j\in S\cap J^+_N}\mathcal{N}_j,
\end{eqnarray*}
we can represent the operator $L^{(N)}_a$ as the self-adjoint block matrix
\begin{eqnarray*}
L^{(N)}_a=\left(
\begin{array}{ccc}
L_R& L_R^S\\
L^R_S&L_S
\end{array}
\right),
\end{eqnarray*}
where $L_R^S=(L_S^R)^{\dag}$, $L_R=L_R^{\dag}$, $L_S=L_S^{\dag}$.

Thus the invertibility of $L^{(N)}_a$ can be expressed via the ''resolvent-type'' identity
\begin{eqnarray}\label{E4-22}
(L^{(N)}_a)^{-1}=\left(
\begin{array}{ccc}
I&-L_R^{-1}L_R^S\\
0&I
\end{array}
\right)
\left(
\begin{array}{ccc}
L_R^{-1}&0\\
0&\mathcal{L}^{-1}
\end{array}
\right)
\left(
\begin{array}{ccc}
I&0\\
-L_S^RL_R^{-1}&I
\end{array}
\right),
\end{eqnarray}
where the ''quasi-singular'' matrix
\begin{eqnarray*}
\mathcal{L}:=L_S-L_S^RL_R^{-1}L_R^S\in\mathcal{A}_s(S).
\end{eqnarray*}
The reason of $\mathcal{L}\in\mathcal{A}_s(S)$ is that $\mathcal{L}$ is the restriction to $S$ of the polynomially localized matrix
\begin{eqnarray*}
I_S(L-I_SLI_R\tilde{L}^{-1}I_RLI_S)I_S\in\mathcal{A}_s,
\end{eqnarray*}
where
\begin{eqnarray*}
\tilde{L}^{-1}=\left(
\begin{array}{ccc}
I&0\\
0&L_R
\end{array}
\right).
\end{eqnarray*}
\begin{lemma}
Assume that nonresonance condition (\ref{E1-5}) holds. For $s_0<s_1<s_2<k-1$, $|L_R^{-1}|_{s_0}\leq2\varsigma^{-1}$, the operator $L_R$ satisfies
\begin{eqnarray}\label{E4-11}
&&|\tilde{L}_R^{-1}|_{s_1}\leq c(s_1)(1+\varepsilon\varsigma^{-1}|T|_{s_1}),\\
\label{E4-12}
&&\|L_R^{-1}u\|_{s_1}\leq c(\gamma,\tau,s_2)(s_2-s_1)^{-\tau}(1+\varepsilon\varsigma^{-1}|T|_{s_2})\|u\|_{s_2},
\end{eqnarray}
where $\tilde{L}^{-1}=L^{-1}_RD_R$, $c(\gamma,\tau,s_2)$ is a constant depending on $\gamma,\tau,s_2$.
\end{lemma}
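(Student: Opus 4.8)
The plan is to establish the two estimates separately, exploiting the block structure of $L^{(N)}_a = D + \varepsilon T$ restricted to the regular sites. First I would write $L_R = D_R(I + \varepsilon D_R^{-1} T_R)$, where $D_R = \mathrm{diag}_{j\in R}(D_j I_j)$ is the diagonal part on $R$ and $T_R = I_R T I_R$ is the restriction of the tame perturbation from Lemma 20. By the definition of the regular sites in \eqref{E4-9}, $|D_j|\geq\varsigma$ for all $j\in R$, so $D_R^{-1}$ is a well-defined diagonal matrix with $|D_R^{-1}|_{s}\leq\varsigma^{-1}$ for every $s\geq0$ (a diagonal matrix has the same norm $|\cdot|_s$ for all $s$, since the off-diagonal blocks vanish). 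Hence $\tilde L_R^{-1} := L_R^{-1}D_R = (I+\varepsilon D_R^{-1}T_R)^{-1}$, and to prove \eqref{E4-11} it suffices to bound the Neumann series $\sum_{m\geq0}(-\varepsilon D_R^{-1}T_R)^m$ in $\mathcal{A}_{s_1}(R)$. Using the algebra property \eqref{E2-4} together with the interpolation inequality \eqref{E2-8} for powers, one gets $|(\varepsilon D_R^{-1}T_R)^m|_{s_1}\leq m\,(c(s_0)\varepsilon\varsigma^{-1}|T|_{s_0})^{m-1}\,c(s_1)\varepsilon\varsigma^{-1}|T|_{s_1}$; since the hypothesis $|L_R^{-1}|_{s_0}\leq 2\varsigma^{-1}$ forces $c(s_0)\varepsilon\varsigma^{-1}|T|_{s_0}$ to be small (say $\leq 1/2$), the series converges and sums to something $\leq c(s_1)(1+\varepsilon\varsigma^{-1}|T|_{s_1})$, which is \eqref{E4-11}.

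For the second estimate \eqref{E4-12} I would pass from the matrix norm $|\cdot|_{s_1}$ to the action on a function $u$ using \eqref{E2-6}: writing $L_R^{-1} = \tilde L_R^{-1} D_R^{-1}$ we have $\|L_R^{-1}u\|_{s_1}\leq c(s_1)(|\tilde L_R^{-1}|_{s_1}\|D_R^{-1}u\|_{s_0} + |\tilde L_R^{-1}|_{s_0}\|D_R^{-1}u\|_{s_1})$. The point now is that $D_R^{-1}$ acts on $\mathcal{N}_j$ by the scalar $D_j^{-1}$, and by the first-order Melnikov nonresonance condition \eqref{E1-5} we have $|D_j^{-1}|\leq \gamma^{-1}|j+\overrightarrow{\rho}|^{\tau}$. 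Therefore $\|D_R^{-1}u\|_{s_1}$ loses $\tau$ derivatives relative to $\|u\|_{s_1+\tau}$ on each block; the standard trick is to absorb this loss by interpolating between $s_1$ and $s_2 > s_1 + \tau$: since $e^{2s_1|j+\overrightarrow{\rho}|}|j+\overrightarrow{\rho}|^{2\tau}\leq c(\tau)(s_2-s_1)^{-2\tau}e^{2s_2|j+\overrightarrow{\rho}|}$ for all $j$ (maximizing $x^{2\tau}e^{-2(s_2-s_1)x}$), one obtains $\|D_R^{-1}u\|_{s_1}\leq c(\gamma,\tau)(s_2-s_1)^{-\tau}\|u\|_{s_2}$, and similarly with $s_1$ replaced by $s_0$. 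Combining this with \eqref{E4-11} and $|\tilde L_R^{-1}|_{s_0}\leq c(s_0)$ (the $m=0$ term dominates at low regularity) yields \eqref{E4-12} with the stated constant $c(\gamma,\tau,s_2)(s_2-s_1)^{-\tau}(1+\varepsilon\varsigma^{-1}|T|_{s_2})$, after noting $|T|_{s_1}\leq|T|_{s_2}$.

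The main obstacle I anticipate is bookkeeping the interplay between the three regularity indices $s_0 < s_1 < s_2$ and making sure the smallness condition that guarantees convergence of the Neumann series is genuinely implied by the hypothesis $|L_R^{-1}|_{s_0}\leq 2\varsigma^{-1}$ rather than being an extra assumption. Concretely, one must check that $|L_R^{-1}|_{s_0}\leq 2\varsigma^{-1}$ together with $|D_R^{-1}|_{s_0}\leq\varsigma^{-1}$ implies $|\varepsilon D_R^{-1}T_R|_{s_0}\leq 1/2$ (or some fixed fraction), so that $\tilde L_R^{-1} = (I+\varepsilon D_R^{-1}T_R)^{-1}$ exists with a controlled $\mathcal{A}_{s_0}$-norm; this is where the constant $c=c(\varsigma,\tau,s,\tilde s,\gamma_1,\gamma)$ advertised in Lemma 12 gets pinned down. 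A secondary technical point is that all of Lemmas 5–11 must be invoked in their restricted form $\mathcal{A}_s(l)$ with $l = R\cap J_N^+$, which by Lemma 11 is legitimate with constants independent of $N$ and of the set $R$; I would state this explicitly at the start so the algebra and interpolation estimates apply verbatim to $L_R$, $T_R$, $D_R$. Everything else is routine manipulation of the polynomially-localized-matrix norms.
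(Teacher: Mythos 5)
Your proposal is correct and follows essentially the same route as the paper: invert $\tilde{L}_R^{-1}=(I+\varepsilon D_R^{-1}T_R)^{-1}$ by a Neumann series controlled through the algebra property \eqref{E2-4} and the power estimate \eqref{E2-8} (with smallness of $\varepsilon\varsigma^{-1}|T|_{s_0}$), then use the Melnikov condition \eqref{E1-5} together with $\sup_{x>0}x^{y}e^{-x}=(ye^{-1})^{y}$ to convert the divisor loss $|j+\overrightarrow{\rho}|^{\tau}$ into the factor $(s_2-s_1)^{-\tau}$, and finish with the interpolation inequality \eqref{E2-6}. The only difference is cosmetic: you apply the interpolation to $\tilde{L}_R^{-1}$ acting on $D_R^{-1}u$ while the paper first passes from $\|L_R^{-1}u\|_{s_1}$ to $(s_2-s_1)^{-\tau}\|\tilde{L}_R^{-1}u\|_{s_2}$ and interpolates afterwards, which yields the same bound.
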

\begin{proof}
It follows from (\ref{E4-3}) and (\ref{E4-9}) that $D_R$ is a diagonal matrix and satisfies $|D_R^{-1}|_s\leq\varsigma^{-1}$. By (\ref{E2-4}), we have that the Neumann series
\begin{eqnarray}\label{E4-13}
\tilde{L}_R^{-1}=L_R^{-1}D_R=\sum_{m\geq0}(-\varepsilon)^m(D_R^{-1}T_R)^m
\end{eqnarray}
is totally convergent in $|\cdot|_{s_1}$ with $|L_R^{-1}|_{s_0}\leq2\varsigma^{-1}$, by taking $\varepsilon\varsigma^{-1}|T|_{s_0}\leq c(s_0)$ small enough.

Using (\ref{E2-4}) and (\ref{E2-8}), we have that $\forall m\in\textbf{N}$,
\begin{eqnarray*}
\varepsilon^m|(D_R^{-1}T_R)^m|_{s_1}&\leq&\varepsilon^mc(s)|(D_R^{-1}T_R)^m|_{s_1}\\
&\leq&c(s)\varepsilon^mm(c(s)|D_R^{-1}T_R|_{s_0})^{m-1}|D_R^{-1}T_R|_{s_1}\\
&\leq&c'(s)\varepsilon m\varsigma^{-1}(\varepsilon c(s_1)\varsigma^{-1}|T|_{s_0})^{m-1}|T|_{s_1},
\end{eqnarray*}
which together with (\ref{E4-13}) implies that for $\varepsilon\varsigma^{-1}|T|_{s_0}< c(s_0)$ small enough, (\ref{E4-11}) holds.

By nonresonance condition (\ref{E1-5}) and $\sup_{x>0}(x^ye^{-x})=(ye^{-1})^y$, $\forall y\geq0$, we derive
\begin{eqnarray}\label{E4-14}
e^{-2|j+\overrightarrow{\rho}|(s_2-s_1)}|\omega_j^2+1-\varepsilon a\omega_j^{2p}|^{-2}&\leq&\gamma^{-1}|j+\overrightarrow{\rho}|^{\tau}e^{-2|j+\overrightarrow{\rho}|(s_2-s_1)}\nonumber\\
&\leq&c(\gamma,\tau)(s_2-s_1)^{-2\tau}.
\end{eqnarray}
Then by (\ref{E4-14}), for any $u\in\textbf{H}_R$,
\begin{eqnarray*}
\|L_R^{-1}u\|_{s_1}^2&=&\sum_{j\in R\cap J_N^+}e^{2|j+\overrightarrow{\rho}|s_1}\|L_R^{-1}u_j\|_{\textbf{L}^2}^2\\
&\leq&\sum_{j\in R\cap J_N^+}e^{2|j+\overrightarrow{\rho}|s_1}|\omega_j^2+1-\varepsilon a\omega_j^{2p}|^{-2}\|\tilde{L}_R^{-1}u_j\|_{\textbf{L}^2}^2\\
&\leq&\sum_{j\in R\cap J_N^+}e^{-2|j+\overrightarrow{\rho}|(s_2-s_1)}|\omega_j^2+1-\varepsilon a\omega_j^{2p}|^{-2}e^{2|j+\overrightarrow{\rho}|s_2}\|\tilde{L}_R^{-1}u_j\|_{\textbf{L}^2}^2\\
&\leq&c(\gamma,\tau)(s_2-s_1)^{-2\tau}\|\tilde{L}_R^{-1}u\|_{s_2}^2.
\end{eqnarray*}
Thus using interpolation (\ref{E2-6}) and (\ref{E4-11}), we derive that for $s_1<s<s_2$,
\begin{eqnarray*}
\|L_R^{-1}u\|_{s_1}
&\leq&c(\gamma,\tau)(s_2-s_1)^{-\tau}\|\tilde{L}_R^{-1}u\|_{s_2}\\
&\leq&c(r,\tau,s_2)(s_2-s_1)^{\tau}(|\tilde{L}_R^{-1}|_{s_2}\|u\|_{s}+|\tilde{L}_R^{-1}|_{s}\|u\|_{s_2})\\
&\leq&c(r,\tau,s_2)(s_2-s_1)^{\tau}(1+\varepsilon\varsigma^{-1}|T|_{s_2})\|u\|_{s_2}.
\end{eqnarray*}
This completes the proof.
\end{proof}
Next we analyse the quasi-singular matrix $\mathcal{L}$. By (\ref{E4-10}), the singular sites restricted to $J_N^+$ are
\begin{eqnarray*}
S=\bigcup_{\alpha\in l_N}\Omega_{\alpha},~~where~l_N:=\{\alpha\in\textbf{N}|m_{\alpha}\leq N\},
\end{eqnarray*}
and $\Omega_{\alpha}\equiv\Omega_{\alpha}\cup J_N^+$. Due to the decomposition $\tilde{H}_S:=\bigoplus_{\alpha\in l_N}\tilde{H}_{\alpha}$, where $\textbf{H}_{\alpha}:=\bigoplus_{j\in\Omega_{\alpha}}\mathcal{N}_j$, we represent $\mathcal{L}$ as the block matrix $\mathcal{L}=(\mathcal{L}_{\alpha}^{\beta})_{\alpha,\beta\in l_N}$, where $\mathcal{L}_{\alpha}^{\beta}:=\Pi_{\textbf{H}_{\alpha}}\mathcal{L}|_{\textbf{H}_{\beta}}$.
So we can rewrite
\begin{eqnarray*}
\mathcal{L}=\mathcal{D}+\mathcal{T},
\end{eqnarray*}
where $\mathcal{D}:=diag_{\alpha\in l_N}(\mathcal{L}_{\alpha})$, $\mathcal{L}_{\alpha}:=\mathcal{L}_{\alpha}^{\alpha}$, $\mathcal{T}:=(\mathcal{L}_{\alpha}^{\beta})_{\alpha\neq\beta}$.

We define a diagonal matrix corresponding to the matrix $\mathcal{D}$ as
$\bar{D}:=diag_{\alpha\in l_N}(\bar{L}_{\alpha})$, where $\bar{L}_{\alpha}=diag_{j\in\Omega_{\alpha}}(D_j)$.

To show $\mathcal{D}$ is invertible, we only need to prove that $\mathcal{L}_{\alpha}$ is invertible, $\forall\alpha\in l_N$.
\begin{lemma}
$\forall\alpha\in l_N$, $\mathcal{L}_{\alpha}$ is invertible and $\|\mathcal{L}_{\alpha}^{-1}\|_0\leq C\gamma_1^{-1}M_{\alpha}^{\kappa}$.
\end{lemma}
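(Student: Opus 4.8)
The plan is to bound from below the smallest singular value of the self-adjoint matrix $\mathcal{L}_\alpha$, by transferring the invertibility hypothesis (\ref{E8-1}) from a truncation $L_a^{(\nu)}$ at a scale $\nu$ comparable to $M_\alpha$ down to the single cluster block, and then using the separation of clusters (Lemma 18) together with the polynomial localization of Lemmas 7 and 8 to discard all couplings between $\Omega_\alpha$ and the rest of $J^+$.

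First I would record the shape of $\mathcal{L}_\alpha$. Restricting $\mathcal{L}=L_S-L_S^RL_R^{-1}L_R^S$ to $\Omega_\alpha$ and using that the diagonal part $D$ of (\ref{E4-3}) has no block off the diagonal, one gets $\mathcal{L}_\alpha=\bar L_\alpha+\varepsilon T_{\Omega_\alpha}-\varepsilon^2T_{\Omega_\alpha}^RL_R^{-1}T_R^{\Omega_\alpha}$ with $\bar L_\alpha=\mathrm{diag}_{j\in\Omega_\alpha}(D_j)$ and $|D_j|<\varsigma$ on $\Omega_\alpha$. By Lemma 19 the quantity $|T|_{s_1}$ is controlled and $\|L_R^{-1}\|_0\le C\varsigma^{-1}$ as in Lemma 20, so $\mathcal{L}_\alpha-\bar L_\alpha$ has small $\textbf{L}^2$-norm and, exactly as for $\mathcal{L}$ right after (\ref{E4-22}), $\mathcal{L}_\alpha\in\mathcal{A}_{s_1}(\Omega_\alpha)$ with $|\mathcal{L}_\alpha|_{s_1}$ controlled. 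Since $\mathcal{L}_\alpha^{\dag}=\mathcal{L}_\alpha$, it suffices to prove $\|\mathcal{L}_\alpha v\|_0\ge c\,\gamma_1M_\alpha^{-\kappa}\|v\|_0$ for every $v\in\textbf{H}_\alpha$. I should stress that a naive perturbation of $\bar L_\alpha$ does not work: although (\ref{E5-1R}) and the integrality of $\omega_j^2$ and $(\omega_j^2)^\varrho$ (Lemma 2) give $|D_j|\ge c\gamma_1M_\alpha^{-c_0}$ on $\Omega_\alpha$, the fixed $\varepsilon$ is not small relative to this bound once $M_\alpha$ is large, which is precisely why the cluster structure must enter.

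Next I would localize at scale $M_\alpha$. Choose a level $\nu\le N$ with $\Omega_\alpha\subseteq J_\nu^+$ and $\nu$ comparable to $M_\alpha$ (the smallest admissible $N_i$; take $\nu=N$ in the borderline case $M_\alpha\sim N$). By (\ref{E8-1}) with $r=\nu$, $\|(L_a^{(\nu)})^{-1}\|_0\le4\nu^\kappa/\gamma_1\le C\gamma_1^{-1}M_\alpha^\kappa$. Splitting $J_\nu^+$ into its regular and singular sites and using the resolvent identity (\ref{E4-22}) at level $\nu$ (the regular block being invertible with inverse $\textbf{L}^2$-norm $\le C\varsigma^{-1}$), the singular-singular block of $(L_a^{(\nu)})^{-1}$ equals $(\mathcal{L}^{(\nu)})^{-1}$, where $\mathcal{L}^{(\nu)}$ is the quasi-singular matrix at level $\nu$; hence $\|(\mathcal{L}^{(\nu)})^{-1}\|_0\le C\gamma_1^{-1}M_\alpha^\kappa$. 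For a unit $v\in\textbf{H}_\alpha$, extended by $0$ to $\textbf{H}_{S\cap J_\nu^+}$, this gives $\|\mathcal{L}^{(\nu)}v\|_0\ge C^{-1}\gamma_1M_\alpha^{-\kappa}$. Writing $\mathcal{L}^{(\nu)}=\mathcal{D}^{(\nu)}+\mathcal{T}^{(\nu)}$ with $\mathcal{D}^{(\nu)}=\mathrm{diag}_\beta(\mathcal{L}_\beta^{(\nu)})$, the vectors $\mathcal{D}^{(\nu)}v=\mathcal{L}_\alpha^{(\nu)}v$ and $\mathcal{T}^{(\nu)}v$ (with $\beta$-component $(\mathcal{L}^{(\nu)})_\beta^\alpha v$ for $\beta\neq\alpha$) are orthogonal, so $\|\mathcal{L}_\alpha^{(\nu)}v\|_0^2=\|\mathcal{L}^{(\nu)}v\|_0^2-\|\mathcal{T}^{(\nu)}v\|_0^2$. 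By Lemma 8, the separation $d(\Omega_\alpha,\Omega_\beta)\ge c(M_\alpha+M_\beta)^\lambda$ of Lemma 18, and the dyadic count of clusters in each annulus, $\|\mathcal{T}^{(\nu)}v\|_0^2\le C|\mathcal{L}^{(\nu)}|_{s_1}^2\sum_{\beta\neq\alpha}(M_\alpha+M_\beta)^{-2\lambda(2s_1-(r+n+1))}\le CM_\alpha^{-2\theta'}$ with $\theta':=\lambda(2s_1-(r+n+1))-(r+n)/2$. Choosing $s_1$ (hence $k$, since $s_1<s_2<k-1$) large enough that $\theta'>\kappa$, and the parameters $\varepsilon,\varsigma$ small relative to $\gamma_1$, one gets $\|\mathcal{L}_\alpha^{(\nu)}v\|_0\ge\frac{1}{\sqrt{2}}C^{-1}\gamma_1M_\alpha^{-\kappa}$.

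Finally I would match the level-$\nu$ block $\mathcal{L}_\alpha^{(\nu)}$ with the level-$N$ block $\mathcal{L}_\alpha$ of the statement: they differ only in that $\mathcal{L}_\alpha$ carries the Schur correction through all regular sites $R\cap J_N^+$ rather than $R\cap J_\nu^+$, and the extra sites sit at distance at least $c_1M_\alpha$ from $\Omega_\alpha$, so one block inversion together with Lemma 8 and $\|(\text{regular block})^{-1}\|_0\le C\varsigma^{-1}$ gives $\|\mathcal{L}_\alpha-\mathcal{L}_\alpha^{(\nu)}\|_0\le C\varepsilon^2\varsigma^{-1}M_\alpha^{-(2s_1-(r+n+1))}$, which is $\ll\gamma_1M_\alpha^{-\kappa}$ for $s_1$ large. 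Hence $\|\mathcal{L}_\alpha v\|_0\ge\frac{1}{2\sqrt{2}}C^{-1}\gamma_1M_\alpha^{-\kappa}$ for every unit $v\in\textbf{H}_\alpha$, so $\mathcal{L}_\alpha$ is invertible with $\|\mathcal{L}_\alpha^{-1}\|_0\le C\gamma_1^{-1}M_\alpha^\kappa$. I expect the main obstacle to be the quantitative balancing in the third paragraph: the polynomial gain $\theta'$ coming from the cluster separation of Lemma 18 must beat the polynomial loss $\kappa$ inherited from (\ref{E8-1}), while $s_1$ is forced to stay strictly below the finite-differentiability threshold $k-1$; this is exactly what dictates the largeness of $k(\mathcal{M})$ and makes the $\mathcal{M}$-dependent separation exponent $\lambda$ enter the choice of constants.
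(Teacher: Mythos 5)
The paper gives no proof of this lemma at all---it defers to Lemma 6.6 of Berti--Procesi \cite{Berti1}---and your argument is a faithful reconstruction of exactly that multiscale scheme: transfer the hypothesis (\ref{E8-1}) at a scale $\nu$ comparable to $M_{\alpha}$ to the quasi-singular matrix via the resolvent identity (\ref{E4-22}) (the $SS$-block of the inverse being $\mathcal{L}^{-1}$), then decouple the single cluster $\Omega_{\alpha}$ by Pythagoras, the separation property of Lemma 18, and the off-diagonal decay of Lemma 8, finally matching the scale-$\nu$ and scale-$N$ blocks. Two small precisions you should make when writing it up: take $\nu=\min\{2M_{\alpha},N\}$ (legitimate because (\ref{E8-1}) holds for every $1\leq r\leq N$) rather than ``the smallest admissible $N_i$'', since only then are the discarded regular sites at distance $\gtrsim M_{\alpha}$ from $\Omega_{\alpha}$; and in that last comparison Lemma 8 cannot be applied to the diagonal block $\Pi_{\alpha}(\cdot)\Pi_{\alpha}$ itself, so each term of the difference must be factored through the far regular sites and Lemma 8 applied to the halves $\Pi_{\alpha}(\cdot)\Pi_{R'}$ and $\Pi_{R'}(\cdot)\Pi_{\alpha}$, with the remaining factors bounded in $\textbf{L}^2$-operator norm (also note the smallness of the off-diagonal term should be arranged through low norms of $T$, consistent with $\|q\|_{\bar{\sigma}}\leq1$).
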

The proof process of above Lemma is similar with Lemma 6.6 in \cite{Berti1}, so we omit it.
\begin{lemma}
Assume that nonreonance condition (\ref{E1-5}) holds. We have
\begin{eqnarray*}
\|\mathcal{D}^{-1}\bar{D}u\|_{s_1}
\leq c(\varsigma,s_1,\gamma_1)N^{\tau}\|u\|_{s_2},
\end{eqnarray*}
where $c(\varsigma,s_1,\gamma_1)$ is a constant which depends on $\varsigma$, $s_1$ and $\gamma_1$.
\end{lemma}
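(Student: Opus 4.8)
\emph{Proof proposal.} The plan is to exploit that $\mathcal{D}^{-1}\bar D$ respects the cluster decomposition $\textbf{H}_S=\bigoplus_{\alpha\in l_N}\textbf{H}_{\alpha}$, with $\textbf{H}_{\alpha}=\bigoplus_{j\in\Omega_{\alpha}}\mathcal{N}_j$: it is the block‑diagonal operator whose $\alpha$‑th block is $\mathcal{L}_{\alpha}^{-1}\bar L_{\alpha}\in\mathcal{L}(\textbf{H}_{\alpha},\textbf{H}_{\alpha})$. So I would first bound each block in the $\textbf{L}^2$ operator norm $\|\cdot\|_0$, and then reassemble the $\textbf{H}_{s_1}$ norm over the clusters, keeping track of the exponential weights via the dyadic separation of the $\Omega_{\alpha}$ established above.

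For the block bound, note that on a singular block $\|\bar L_{\alpha}\|_0=\max_{j\in\Omega_{\alpha}}|D_j|<\varsigma$ by the very definition (\ref{E4-9}) of the singular sites, while the estimate $\|\mathcal{L}_{\alpha}^{-1}\|_0\le C\gamma_1^{-1}M_{\alpha}^{\kappa}$ proved just above, together with $M_{\alpha}\le N$ (the cluster being restricted to $J_N^+$), already gives a bound of the form $\|\mathcal{L}_{\alpha}^{-1}\bar L_{\alpha}\|_0\le c(\varsigma,\gamma_1)N^{\tau}$. Here the exponent is driven down to $\tau$ by also invoking the nonresonance condition (\ref{E1-5}): since $D_j=\omega_j^2+1-\varepsilon a\omega_j^{2\varrho}$, it yields $\|\bar L_{\alpha}^{-1}\|_0\le\gamma^{-1}M_{\alpha}^{\tau}$, and because $L_a^{(N)}=D+\varepsilon T$ forces $\mathcal{L}_{\alpha}$ to be an $O(\varepsilon)$ perturbation of the diagonal block $\bar L_{\alpha}$, the operators $\mathcal{L}_{\alpha}^{-1}$ and $\bar L_{\alpha}^{-1}$ are comparable once $\varepsilon\varsigma^{-1}$ is small (a Neumann series in $\varepsilon\bar L_{\alpha}^{-1}(\mathcal{L}_{\alpha}-\bar L_{\alpha})/\varepsilon$, exactly as was done for $L_R^{-1}$ above).

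For the reassembly I would expand
\begin{eqnarray*}
\|\mathcal{D}^{-1}\bar D u\|_{s_1}^2=\sum_{\alpha\in l_N}\sum_{j\in\Omega_{\alpha}}e^{2|j+\overrightarrow{\rho}|s_1}\|(\mathcal{L}_{\alpha}^{-1}\bar L_{\alpha}u_{\alpha})_j\|_{\textbf{L}^2}^2 ,
\end{eqnarray*}
and use that inside a single cluster $m_{\alpha}\le|j+\overrightarrow{\rho}|\le M_{\alpha}\le 2m_{\alpha}$ by the dyadic property, so that $e^{|j+\overrightarrow{\rho}|s_1}\le e^{M_{\alpha}s_1}$ while $\|u_{\alpha}\|_0^2\le e^{-2m_{\alpha}s_2}\sum_{j\in\Omega_{\alpha}}e^{2|j+\overrightarrow{\rho}|s_2}\|u_j\|_{\textbf{L}^2}^2$. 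Hence the $\alpha$‑th cluster contributes at most $e^{2M_{\alpha}s_1-2m_{\alpha}s_2}\|\mathcal{L}_{\alpha}^{-1}\bar L_{\alpha}\|_0^2\,\|\Pi_{\textbf{H}_{\alpha}}u\|_{s_2}^2$; since $M_{\alpha}\le 2m_{\alpha}$ and $s_1<s_2$, the weight factor $e^{2M_{\alpha}s_1-2m_{\alpha}s_2}$ is bounded by a constant depending only on $s_1$ (any residual polynomial being absorbed through $x^{\tau}e^{-x(s_2-s_1)}\le c(\tau)(s_2-s_1)^{-\tau}$, as in (\ref{E4-14})). Summing over $\alpha$ and using orthogonality, $\sum_{\alpha}\|\Pi_{\textbf{H}_{\alpha}}u\|_{s_2}^2\le\|u\|_{s_2}^2$, then gives $\|\mathcal{D}^{-1}\bar D u\|_{s_1}\le c(\varsigma,s_1,\gamma_1)N^{\tau}\|u\|_{s_2}$.

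I expect the main obstacle to be the block estimate of the second paragraph. The bound $\|\mathcal{L}_{\alpha}^{-1}\|_0\le C\gamma_1^{-1}M_{\alpha}^{\kappa}$ by itself carries the exponent $\kappa$, which by (\ref{E8-2}) is in general strictly larger than $\tau$, so it cannot simply be quoted; one genuinely has to use that $\bar L_{\alpha}$ is small on the singular sites and that $\mathcal{L}_{\alpha}$ is a controlled perturbation of the diagonal block $\bar L_{\alpha}$ in order to reduce the effective power of $M_{\alpha}$, hence of $N$, to $\tau$. Once this is secured, the reassembly is routine, provided one carefully invokes the dyadic and separation properties of the clusters $\Omega_{\alpha}$ so that passing between the $s_1$‑ and $s_2$‑weights never produces an exponential‑in‑$N$ factor.
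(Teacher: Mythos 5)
Your overall architecture (block-diagonal reduction over the clusters $\Omega_{\alpha}$, a per-block $\textbf{L}^2$ bound, then reassembly) matches the paper's, but the step you yourself single out as the crux is where the argument breaks. You propose to obtain $\|\mathcal{L}_{\alpha}^{-1}\|_0\lesssim\gamma^{-1}M_{\alpha}^{\tau}$ by viewing $\mathcal{L}_{\alpha}$ as an $O(\varepsilon)$ perturbation of the diagonal block $\bar{L}_{\alpha}$ and summing a Neumann series. Convergence of that series requires $\|\bar{L}_{\alpha}^{-1}(\mathcal{L}_{\alpha}-\bar{L}_{\alpha})\|_0\lesssim\varepsilon\gamma^{-1}M_{\alpha}^{\tau}$ to be small, and since $M_{\alpha}$ may be of order $N=N_0^{i}\to\infty$ while $\varepsilon$ stays fixed along the iteration, this smallness is simply not available; were it available, the same series would invert the whole singular block directly and the hypothesis (\ref{E8-1}), the choice of $\kappa$ in (\ref{E8-2}) and the cluster machinery would be superfluous. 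On the singular sites the diagonal is small by (\ref{E4-9}), which is exactly why the inverse of $\mathcal{L}_{\alpha}$ must be controlled through the inductive bound (\ref{E8-1}) (the lemma immediately preceding this one), not by perturbation off $\bar{L}_{\alpha}$. The paper never attempts the exponent reduction you describe: its proof inserts the cluster bound on $\|\mathcal{L}_{\alpha}^{-1}\|_0$ as stated there, together with $M_{\alpha}\leq N$, and the smallness $|D_j|<\varsigma$ enters only at the very end, in the form $\|\bar{D}u\|_{s_1}\leq c(\varsigma)\|u\|_{s_2}$ via the interpolation inequality (\ref{E2-6}); it is never used to cancel powers of $N$ inside a block.

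There is a second gap in your reassembly. The weight factor $e^{2M_{\alpha}s_1-2m_{\alpha}s_2}$ is not bounded by a constant merely because $M_{\alpha}\leq2m_{\alpha}$ and $s_1<s_2$: since $M_{\alpha}$ can equal $2m_{\alpha}$, the exponent is $2m_{\alpha}(2s_1-s_2)$, which grows like $N$ whenever $s_2<2s_1$, and the lemma is needed precisely for $s_2$ close to $s_1$. The paper's bookkeeping avoids this entirely: inside each cluster it trades $\|\cdot\|_{s_1}$ against $\|\cdot\|_0$ in both directions, paying only the dyadic ratio $(M_{\alpha}/m_{\alpha})^{s_1}\leq2^{s_1}$ (note the conversion factors it uses are the polynomial ones $M_{\alpha}^{s_1}$, $m_{\alpha}^{-s_1}$, consistent with Sobolev-type weights, rather than the exponential weights you took literally), keeps $\bar{L}_{\alpha}u_{\alpha}$ intact so that after summing over $\alpha$ one is left with $N^{\tau}\|\bar{D}u\|_{s_1}$, and only then passes from the $s_1$-norm to the $s_2$-norm through $|D_j|<\varsigma$ and (\ref{E2-6}). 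Both your block estimate and your weight bookkeeping would need to be redone along these lines.
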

\begin{proof}
Note that $\|u_{\alpha}\|_0\leq m_{\alpha}^{-s_1}\|u_{\alpha}\|_{s_1}$ and $M_{\alpha}=2m_{\alpha}$.
So for any $u=\sum_{\alpha\in l_N}u_{\alpha}\in\textbf{H}_{\alpha}$, $u_{\alpha}\in\textbf{H}_{\alpha}$,
\begin{eqnarray}\label{E4-15}
\|\mathcal{D}^{-1}\bar{D}u\|_{s_1}^2&=&\sum_{\alpha\in l_N}\|\mathcal{L}_{\alpha}^{-1}\bar{L}_{\alpha}u_{\alpha}\|_{s_1}^2\leq\sum_{\alpha\in l_N}M_{\alpha}^{2s_1}\|\mathcal{L}_{\alpha}^{-1}\bar{L}_{\alpha}u_{\alpha}\|_{0}^2\nonumber\\
&\leq&c\gamma_1^{-2}\sum_{\alpha\in l_N}M_{\alpha}^{2(s_1+\tau)}\|\bar{L}_{\alpha}u_{\alpha}\|_{0}^2\nonumber\\
&\leq&c\gamma_1^{-2}\sum_{\alpha\in l_N}M_{\alpha}^{2(s_1+\tau)}m_{\alpha}^{-2s_1}\|\bar{L}_{\alpha}u_{\alpha}\|_{s_1}^2\nonumber\\
&\leq&c\gamma_1^{-2}4^{s_1}\sum_{\alpha\in l_N}M_{\alpha}^{2\tau}\|\bar{L}_{\alpha}u_{\alpha}\|_{s_1}^2\nonumber\\
&\leq&c\gamma_1^{-2}4^{s_1}N^{2\tau}\sum_{\alpha\in l_N}\|\bar{L}_{\alpha}u_{\alpha}\|_{s_1}^2\nonumber\\
&=&c\gamma_1^{-2}4^{s_1}N^{2\tau}\|\bar{D}u\|_{s_1}^2.
\end{eqnarray}
Using interpolation (\ref{E2-6}) and (\ref{E4-9}), for $0<s_1<s_2$, it follows from (\ref{E4-15}) that
\begin{eqnarray*}
\|\mathcal{D}^{-1}\bar{D}u\|_{s_1}&\leq&c\gamma_1^{-1}2^{s_1}N^{\tau}\|\bar{D}u\|_{s_1}\\
&\leq&c\gamma_1^{-1}2^{s_1}N^{\tau}(|\bar{D}|_{s_2}\|u\|_{s_1}+|\bar{D}|_{s_1}\|u\|_{s_2})\\
&\leq&c(\varsigma)\gamma_1^{-1}2^{s_1+1}N^{\tau}\|u\|_{s_2}.
\end{eqnarray*}
This completes the proof.
\end{proof}
The following result is taken from \cite{Berti1}, so we omit the proof.
\begin{lemma}
For $\kappa_0=\tau+r+n+1$, $\forall s\geq0$, $\forall m\in\textbf{N}$, there hold:
\begin{eqnarray}\label{E4-17}
&&c(s_1)\|\mathcal{D}^{-1}\mathcal{T}\|_{s_0}<\frac{1}{2},~~\|\mathcal{D}^{-1}\|_s\leq c(s)\gamma_1^{-1}N^{\tau},\\
\label{E4-19}
&&\|(\mathcal{D}^{-1}\mathcal{T})^mu\|_s\leq(\varepsilon\gamma^{-1}K(s))^m(mN^{\kappa_0}|T|_s|T|_{s_0}^{m-1}\|u\|_{s_0}+|T|^m_{s_0}\|u\|_s).~~~~~~~
\end{eqnarray}
\end{lemma}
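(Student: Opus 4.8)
Since $\mathcal{L}=\mathcal{D}+\mathcal{T}$ with $\mathcal{D}=\mathrm{diag}_{\alpha\in l_N}(\mathcal{L}_\alpha)$ block--diagonal in the cluster index, everything is aimed at the Neumann expansion
\[
\mathcal{L}^{-1}=(I+\mathcal{D}^{-1}\mathcal{T})^{-1}\mathcal{D}^{-1}=\sum_{m\geq0}(-1)^m(\mathcal{D}^{-1}\mathcal{T})^m\,\mathcal{D}^{-1},
\]
and (\ref{E4-17})--(\ref{E4-19}) are exactly the three inputs that make it converge in the scale $\textbf{H}_s$. I would prove them in the order: first $\|\mathcal{D}^{-1}\|_s\leq c(s)\gamma_1^{-1}N^{\tau}$; then the low--norm contraction $c(s_1)\|\mathcal{D}^{-1}\mathcal{T}\|_{s_0}<\frac{1}{2}$; then the power estimate (\ref{E4-19}). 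The ingredients are Lemma 21 (blockwise resolvent bound $\|\mathcal{L}_\alpha^{-1}\|_0\leq C\gamma_1^{-1}M_\alpha^{\kappa}$, a consequence of the Diophantine/$\gamma_1$ conditions (\ref{E5-1R})--(\ref{E8-1})), Lemma 18 (the singular clusters $\Omega_\alpha$ are dyadic, $M_\alpha\leq2m_\alpha$, and polynomially separated, $d(\Omega_\alpha,\Omega_\beta)\geq c(M_\alpha+M_\beta)^{\lambda}$), the polynomially localized calculus of Lemma 7 with the off--diagonal decay of Lemma 8, and the fact that in $L_a^{(N)}=D+\varepsilon T$ only the $\varepsilon T$ part is off--diagonal.

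\textbf{Inversion of $\mathcal{D}$.} Because $\mathcal{D}^{-1}=\mathrm{diag}_{\alpha\in l_N}(\mathcal{L}_\alpha^{-1})$ acts on each block $\textbf{H}_\alpha=\bigoplus_{j\in\Omega_\alpha}\mathcal{N}_j$ separately, the task reduces to upgrading the $\textbf{L}^2$--bound of Lemma 21 to the norm of $\textbf{H}_s$. Using the dyadic structure (Lemma 18) --- each $\Omega_\alpha$ lies in the annulus $\{m_\alpha\leq|j+\overrightarrow{\rho}|\leq2m_\alpha\}$ --- together with the loss--of--derivatives mechanism already exploited in Lemma 22, one pays a factor $M_\alpha^{\kappa}\leq(2N)^{\kappa}$ in norm per block, and the separation of Lemma 18 makes the resulting sum over $\alpha$ harmless; this gives $\|\mathcal{D}^{-1}\|_s\leq c(s)\gamma_1^{-1}N^{\tau}$, the matrix analogue of Lemma 22.

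\textbf{Contraction.} Write $\mathcal{T}=(\mathcal{L}_\alpha^{\beta})_{\alpha\neq\beta}$. Since $\mathcal{L}=L_S-L_S^RL_R^{-1}L_R^S$ with $L_S^R,L_R^S$ of order $\varepsilon$ and $|L_R^{-1}|_{s_0}\leq2\varsigma^{-1}$ (Lemma 20), the between--cluster blocks carry a factor $\varepsilon$, so $|\mathcal{T}|_s\leq C\varepsilon|T|_s$; and since $\mathcal{L}$ is the restriction to $S$ of the polynomially localized matrix of the resolvent identity (\ref{E4-22}), the block $\mathcal{L}_\alpha^{\beta}$ is supported at distance $\geq d(\Omega_\alpha,\Omega_\beta)\geq c(M_\alpha+M_\beta)^{\lambda}$ and decays accordingly (Lemma 8): $\|\mathcal{L}_\alpha^{\beta}\|_0\leq c(s)\varepsilon|T|_s\,d(\Omega_\alpha,\Omega_\beta)^{-(2s-(r+n+1))}$. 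Combining with the blockwise bound for $\mathcal{L}_\alpha^{-1}$ and summing over $\beta\neq\alpha$ --- the series converges uniformly in $N$ because $\lambda>0$ while the number of clusters with $M_\beta\leq k$ is only polynomial in $k$ --- yields $\|\mathcal{D}^{-1}\mathcal{T}\|_{s_0}\leq C\varepsilon\gamma_1^{-1}|T|_{s_0}$, which is $<\frac{1}{2c(s_1)}$ once $\varepsilon$ is small relative to $\gamma_1$. This is the point at which the clusters of small divisors are actually dealt with, and I expect it to be the main obstacle: one must invert the cluster--diagonal $\mathcal{D}$ with only a fixed power of $N$ lost and, at the same time, keep the inter--cluster coupling $\mathcal{D}^{-1}\mathcal{T}$ a genuine contraction in low norm --- which is precisely what forces the whole machinery of Lemma 18, Lemma 21 and the localized--matrix calculus.

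\textbf{Power estimate.} Set $A:=\mathcal{D}^{-1}\mathcal{T}$. From the previous two steps and the algebra/interpolation inequalities (\ref{E2-4})--(\ref{E2-6}), (\ref{E2-8}) one gets $|A|_{s_0}\leq\varepsilon\gamma^{-1}K(s_0)|T|_{s_0}$ and, losing one power $N^{\kappa_0}$ through $\mathcal{D}^{-1}$ in the high norm, $|A|_s\leq\varepsilon\gamma^{-1}K(s)N^{\kappa_0}|T|_s$. Estimate (\ref{E4-19}) then follows by induction on $m$: apply $A$ to $A^{m-1}u$ and use $\|Av\|_s\leq c(s)(|A|_s\|v\|_{s_0}+|A|_{s_0}\|v\|_s)$; since the single high--norm factor $|A|_s$ is the only one carrying $N^{\kappa_0}$, the inductive step reproduces the asymmetric shape $mN^{\kappa_0}|T|_s|T|_{s_0}^{m-1}\|u\|_{s_0}+|T|_{s_0}^m\|u\|_s$ after collecting the geometric factor $(\varepsilon\gamma^{-1}K(s))^m$. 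With (\ref{E4-17})--(\ref{E4-19}) in hand the Neumann series for $\mathcal{L}^{-1}$, hence for $(L_a^{(N)})^{-1}$ via (\ref{E4-22}), converges and is estimated as claimed.
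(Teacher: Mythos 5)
The paper does not actually prove this lemma: it is stated with the remark that ``the following result is taken from \cite{Berti1}'' and the proof is omitted, so there is nothing in the text to compare against line by line. Your reconstruction follows what is clearly the intended (Berti--Procesi) route --- invert the cluster-diagonal part $\mathcal{D}$ blockwise via Lemma 21 and the dyadic property, control the inter-cluster coupling $\mathcal{T}$ through the off-diagonal decay of Lemma 8 combined with the separation property of Lemma 18, and obtain the power estimate (\ref{E4-19}) by the same induction that yields (\ref{E2-8}), with the factor $m$ counting which of the $m$ factors is taken in the high norm. That overall architecture is correct and is the only sensible one here.

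Two points in your write-up are, however, not yet proofs. First, in the inversion of $\mathcal{D}$ you invoke $\|\mathcal{L}_\alpha^{-1}\|_0\leq C\gamma_1^{-1}M_\alpha^{\kappa}$ and then announce $\|\mathcal{D}^{-1}\|_s\leq c(s)\gamma_1^{-1}N^{\tau}$; paying ``a factor $M_\alpha^{\kappa}\leq(2N)^{\kappa}$ per block'' gives $N^{\kappa}$, not $N^{\tau}$, and by (\ref{E8-2}) $\kappa$ is in general strictly larger than $\tau$. Either you must explain how the exponent is lowered to $\tau$ (the paper's own Lemma 22 silently replaces $\kappa$ by $\tau$ in exactly the same place, so this discrepancy is inherited from the statement, but a proof must resolve it one way or the other), or the estimate you can actually prove is $c(s)\gamma_1^{-1}N^{\kappa}$. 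Second, in the contraction step the product you must sum over $\beta\neq\alpha$ is of the form $\gamma_1^{-1}M_\alpha^{\kappa}\cdot\varepsilon\cdot d(\Omega_\alpha,\Omega_\beta)^{-(2s_0-(r+n+1))}$ with $d(\Omega_\alpha,\Omega_\beta)\geq c(M_\alpha+M_\beta)^{\lambda}$; for this to be bounded uniformly in $N$ (and not merely convergent in $\beta$) you need the quantitative condition $\lambda\bigl(2s_0-(r+n+1)\bigr)>\kappa+(r+n+1)$, i.e.\ the separation exponent must beat the cluster loss plus the polynomial count of clusters. You name the mechanism but never state this inequality, and it is precisely the constraint that fixes the admissible $s_0$ and makes $c(s_1)\|\mathcal{D}^{-1}\mathcal{T}\|_{s_0}<\tfrac12$ a theorem rather than a hope; relatedly, the appearance of $\gamma^{-1}$ (rather than $\gamma_1^{-1}$) on the right of (\ref{E4-19}) should be tracked explicitly through your bound $|A|_{s_0}\leq\varepsilon\gamma^{-1}K(s_0)|T|_{s_0}$.
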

\begin{lemma}
Assume that nonreonance condition (\ref{E1-5}) holds. For $0<s_0<s_1<s_2<s_3<k-1$, we have
\begin{eqnarray}
\label{E4-18}
\|\mathcal{L}^{-1}u\|_{s_1}\leq c(\varsigma,\tau,s_1,\gamma_1,\gamma)N^{\tau+\kappa_0}(s_3-s_2)^{-\tau}(\|u\|_{s_3}+\varepsilon|T|_{s_1}\|u\|_{s_2}).
\end{eqnarray}
\end{lemma}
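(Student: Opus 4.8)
The plan is to invert the quasi–singular matrix $\mathcal{L}=\mathcal{D}+\mathcal{T}$ by a Neumann series built on the block–diagonal part $\mathcal{D}$ (diagonal with respect to the clusters $\Omega_{\alpha}$), and then to extract the bound in the strong norm $\|\cdot\|_{s_1}$ from the tame estimate (\ref{E4-19}). First I would write $\mathcal{L}=\mathcal{D}(I+\mathcal{D}^{-1}\mathcal{T})$, so that
\begin{eqnarray*}
\mathcal{L}^{-1}=\sum_{m\geq0}(-\mathcal{D}^{-1}\mathcal{T})^{m}\mathcal{D}^{-1};
\end{eqnarray*}
by the first inequality in (\ref{E4-17}), $c(s_1)\|\mathcal{D}^{-1}\mathcal{T}\|_{s_0}<1/2$, so the series converges in $|\cdot|_{s_0}$ once $\varepsilon$ is small, and it remains only to control it in $\|\cdot\|_{s_1}$ acting on a vector $u\in\textbf{H}_S$.

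The key preliminary step is a two–norm bound for $\mathcal{D}^{-1}u$. I would factor $\mathcal{D}^{-1}=(\mathcal{D}^{-1}\bar{D})\bar{D}^{-1}$. Since $\bar{D}=\mathrm{diag}_{j\in S}(D_j)$ and the first–order Melnikov condition (\ref{E1-5}) gives $|D_j|\geq\gamma|j+\overrightarrow{\rho}|^{-\tau}$ on the singular sites, the elementary inequality $\sup_{x>0}x^{y}e^{-x}=(y/e)^{y}$ — used exactly as in (\ref{E4-14}) — yields $\|\bar{D}^{-1}u\|_{s_2}\leq c(\gamma,\tau)(s_3-s_2)^{-\tau}\|u\|_{s_3}$. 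Combining this with the bound $\|\mathcal{D}^{-1}\bar{D}v\|_{s_1}\leq c(\varsigma,s_1,\gamma_1)N^{\tau}\|v\|_{s_2}$ (the preceding lemma) applied to $v=\bar{D}^{-1}u$ gives $\|\mathcal{D}^{-1}u\|_{s_1}\leq c\,N^{\tau}(s_3-s_2)^{-\tau}\|u\|_{s_3}$; for the low norm I would instead use the second inequality in (\ref{E4-17}) together with $\|u\|_{s_0}\leq\|u\|_{s_2}$ to get $\|\mathcal{D}^{-1}u\|_{s_0}\leq c\gamma_1^{-1}N^{\tau}\|u\|_{s_2}$.

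Then I would sum the series termwise, applying (\ref{E4-19}) with $s=s_1$ to the vector $\mathcal{D}^{-1}u$:
\begin{eqnarray*}
\|(\mathcal{D}^{-1}\mathcal{T})^{m}\mathcal{D}^{-1}u\|_{s_1}\leq(\varepsilon\gamma^{-1}K(s_1))^{m}\big(mN^{\kappa_0}|T|_{s_1}|T|_{s_0}^{m-1}\|\mathcal{D}^{-1}u\|_{s_0}+|T|_{s_0}^{m}\|\mathcal{D}^{-1}u\|_{s_1}\big).
\end{eqnarray*}
For $\varepsilon$ small enough that $\varepsilon\gamma^{-1}K(s_1)|T|_{s_0}\leq1/2$, the geometric sum $\sum_m(\varepsilon\gamma^{-1}K(s_1)|T|_{s_0})^{m}$ and the arithmetico–geometric sum $\sum_{m\geq1}m(\varepsilon\gamma^{-1}K(s_1)|T|_{s_0})^{m-1}$ both converge; pulling one factor $\varepsilon$ out of the first piece (which carries $m\geq1$) and inserting the two bounds above gives
\begin{eqnarray*}
\|\mathcal{L}^{-1}u\|_{s_1}\leq c(\varsigma,\tau,s_1,\gamma_1,\gamma)\big(\varepsilon N^{\kappa_0+\tau}|T|_{s_1}\|u\|_{s_2}+N^{\tau}(s_3-s_2)^{-\tau}\|u\|_{s_3}\big).
\end{eqnarray*}
Since $N^{\tau}\leq N^{\tau+\kappa_0}$ and, because $s_3-s_2<k-1$, the factor $(s_3-s_2)^{-\tau}$ is bounded below by a positive constant, I may insert $N^{\tau+\kappa_0}$ and $(s_3-s_2)^{-\tau}$ on both terms, which is exactly (\ref{E4-18}).

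The step I expect to be the main obstacle is forcing the Neumann series to converge in the \emph{strong} norm $\|\cdot\|_{s_1}$ and not merely in $\|\cdot\|_{s_0}$; this is precisely why the tame estimate (\ref{E4-19}), in which the $s_1$–part of $(\mathcal{D}^{-1}\mathcal{T})^{m}$ grows only linearly in $m$ while the bulk decays geometrically, is indispensable. The accompanying bookkeeping difficulty is to verify that the two distinct $\tau$–losses — the algebraic factor $N^{\tau}$ coming from inverting the nearly singular diagonal blocks of $\mathcal{D}$, and the analytic factor $(s_3-s_2)^{-\tau}$ coming from the genuine small divisors $D_j^{-1}$ — together with the $\kappa_0$–loss from the off–diagonal cluster coupling in (\ref{E4-19}) combine to give exactly $N^{\tau+\kappa_0}(s_3-s_2)^{-\tau}$ and nothing worse, which is what the Nash–Moser iteration of Section~3 needs in order to close.
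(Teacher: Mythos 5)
Your proposal is correct and follows essentially the same route as the paper: the Neumann series $\mathcal{L}^{-1}=\sum_{m\geq0}(-1)^m(\mathcal{D}^{-1}\mathcal{T})^m\mathcal{D}^{-1}$ summed via the tame estimate (\ref{E4-19}), combined with the factorization $\mathcal{D}^{-1}=(\mathcal{D}^{-1}\bar{D})\bar{D}^{-1}$, the bound of Lemma 20, and the Melnikov condition with the $\sup_{x>0}x^{y}e^{-x}$ trick to produce $N^{\tau}(s_3-s_2)^{-\tau}\|u\|_{s_3}$. Your explicit treatment of the low-norm term $\|\mathcal{D}^{-1}u\|_{s_0}\leq c\gamma_1^{-1}N^{\tau}\|u\|_{s_2}$ via the second estimate in (\ref{E4-17}) fills in a step the paper leaves implicit, and the final bookkeeping matches (\ref{E4-21}).
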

\begin{proof}
The Neumann series
\begin{eqnarray}\label{E4-16R}
\mathcal{L}^{-1}=(I+\mathcal{D}^{-1}\mathcal{T})^{-1}\mathcal{D}^{-1}
=\sum_{m\geq0}(-1)^m(\mathcal{D}^{-1}\mathcal{T})^m\mathcal{D}^{-1}
\end{eqnarray}
is totally convergent in operator norm $\|\cdot\|_{s_0}$ with $\|\mathcal{L}^{-1}\|_{s_0}\leq c\gamma_1^{-1}N^{\tau}$,
by using (\ref{E4-17}).

By (\ref{E4-19}) and (\ref{E4-16R}), we have
\begin{eqnarray}\label{E4-16}
\|\mathcal{L}^{-1}u\|_{s_1}&\leq&\|\mathcal{D}^{-1}u\|_{s_1}+
\sum_{m\geq1}\|(\mathcal{D}^{-1}\mathcal{T})^m\mathcal{D}^{-1}u\|_{s_1}\nonumber\\
&\leq&\|\mathcal{D}^{-1}u\|_{s_1}+
\|\mathcal{D}^{-1}u\|_{s_1}\sum_{m\geq1}(\varepsilon\gamma_1^{-1}K(s)|T|_{s_0})^m\nonumber\\
&&+N^{\kappa_0}K(s_1)\varepsilon\gamma_1^{-1}|T|_{s_1}\|\mathcal{D}^{-1}u\|_{s_0}\sum_{m\geq1}m(K(s)\varepsilon\gamma_1^{-1}|T|_{s_0})^{m-1}.~~~~~~~~~
\end{eqnarray}
Using $\sup_{x>0}(x^ye^{-x})=(ye^{-1})^y$, $\forall y\geq0$, for $0<s_1<s_2<s_3$, it follows from Lemma 20 that
\begin{eqnarray}\label{E4-20}
\|\mathcal{D}^{-1}u\|^2_{s_1}&=&\|\mathcal{D}^{-1}\bar{D}\bar{D}^{-1}u\|^2_{s_1}\leq c^2(\varsigma,s_1,\gamma_1)N^{2\tau}\|\bar{D}^{-1}u\|^2_{s_2}\nonumber\\
&=&c^2(\varsigma,s_1,\gamma_1)N^{2\tau}\sum_{j\in S\cap J_N^+}e^{2|j+\overrightarrow{\rho}|s_2}\|\bar{D}^{-1}u_j\|_{\textbf{L}^2}^2\nonumber\\
&\leq&c^2(\varsigma,s_1,\gamma_1)N^{2\tau}\sum_{j\in S\cap J_N^+}e^{2|j+\overrightarrow{\rho}|s_2}|\omega_j^2+1-\varepsilon a\omega_j^{2p}|^{-2}\|u_j\|_{\textbf{L}^2}^2\nonumber\\
&\leq&c^2(\varsigma,s_1,\gamma_1)N^{2\tau}\sum_{j\in S\cap J_N^+}e^{-2|j+\overrightarrow{\rho}|(s_3-s_2)}|j+\overrightarrow{\rho}|^{-2}e^{2|j+\overrightarrow{\rho}|s_3}\|u_j\|_{\textbf{L}^2}^2~~~~\nonumber\\
&\leq&c^2(\varsigma,\tau,s_1,\gamma_1,\gamma)N^{2\tau}(s_3-s_2)^{-2\tau}\|u\|_{s_3}^2.
\end{eqnarray}
Thus by (\ref{E4-16}) and (\ref{E4-20}), we derive
\begin{eqnarray}\label{E4-21}
\|\mathcal{L}^{-1}u\|_{s_1}
&\leq&\gamma_1^{-1}N^{\kappa_0}K'(s_1)(\|\mathcal{D}^{-1}u\|_{s_1}+\varepsilon|T|_{s_1}\|\mathcal{D}^{-1}u\|_{s_0})\nonumber\\
&\leq&c(\varsigma,\tau,s_1,\gamma_1,\gamma)N^{\tau+\kappa_0}(s_3-s_2)^{-\tau}(\|u\|_{s_3}+\varepsilon|T|_{s_1}\|u\|_{s_2}),~~~~~
\end{eqnarray}
where $0<s_1<s_2<s_3$ and $\varepsilon\gamma_1^{-1}\varsigma^{-1}(1+|T|_{s_0})\leq c(k)$ small enough.
\end{proof}
Now we are ready to prove Lemma 12. Let $u=u_R+u_S$ with $u_S\in\textbf{H}_S$, $u_R\in\textbf{H}_R$. Then by the resolvent identity (\ref{E4-22}),
\begin{eqnarray}\label{E4-23}
\|(L^{(N)})^{-1}u\|_{s_1}&\leq&\|L_R^{-1}u_R+L_R^{-1}L_S^R\mathcal{L}^{-1}(u_S+L_{R}^SL_R^{-1}u_R)\|_{s_1}
+\|\mathcal{L}^{-1}(u_R+L_R^SL_R^{-1}u_R)\|_{s_1}\nonumber\\
&\leq&\|L_R^{-1}u_R\|_{s_1}+\|L_R^{-1}L_S^R\mathcal{L}^{-1}u_S\|_{s_1}+\|L_R^{-1}L_S^R\mathcal{L}^{-1}L_{R}^SL_R^{-1}u_R\|_{s_1}\nonumber\\
&&+\|\mathcal{L}^{-1}u_R\|_{s_1}+\|\mathcal{L}^{-1}L_R^SL_R^{-1}u_R\|_{s_1}.
\end{eqnarray}
Next we estimate the right hand side of (\ref{E4-23}) one by one. Using (\ref{E2-6}), (\ref{E4-12}) and (\ref{E4-18}), for $0<s_1<s_2<s_3<s_4<k-1$, we have
\begin{eqnarray}\label{E4-24}
\|L_R^{-1}L_S^R\mathcal{L}^{-1}u_S\|_{s_1}&\leq&c(\gamma,\tau,s_2)(s_2-s_1)^{-\tau}(1+\varepsilon\varsigma^{-1}|T|_{s_2})\|L_S^R\mathcal{L}^{-1}u_S\|_{s_2}\nonumber\\
&\leq&c(\gamma,\tau,s_2)(s_2-s_1)^{-\tau}(1+\varepsilon\varsigma^{-1}|T|_{s_2})|T|_{s_2}\|\mathcal{L}^{-1}u\|_{s_2}\nonumber\\
&\leq&c(\gamma,\gamma_1,\varsigma,\tau,s_2)(s_2-s_1)^{-\tau}(s_4-s_3)^{-\tau}N^{\tau+\kappa_0}\nonumber\\
&&\times(1+\varepsilon\varsigma^{-1}|T|_{s_2})|T|_{s_2}(\|u\|_{s_3}+\varepsilon|T|_{s_2}\|u\|_{s_4}),
\end{eqnarray}
\begin{eqnarray}\label{E4-25}
\|\mathcal{L}^{-1}L_R^SL_R^{-1}u_R\|_{s_1}&\leq& c(\varsigma,\tau,s_1,\gamma_1,\gamma)N^{\tau+\kappa_0}(s_3-s_2)^{-\tau}\nonumber\\
&&\times(\|L_R^SL_R^{-1}u_R\|_{s_3}+\varepsilon|T|_{s_1}\|L_R^SL_R^{-1}u_R\|_{s_2})\nonumber\\
&\leq&c(\varsigma,\tau,s_1,s_2,s_3,\gamma_1,\gamma)N^{\tau+\kappa_0}(s_3-s_2)^{-\tau}\nonumber\\
&&\times(|T|_{s_3}\|L_R^{-1}u_R\|_{s_3}+\varepsilon|T|_{s_1}|T|_{s_2}\|L_R^{-1}u_R\|_{s_2})\nonumber\\
&\leq&c(\varsigma,\tau,s_1,s_2,s_3,\gamma_1,\gamma)N^{\tau+\kappa_0}(s_3-s_2)^{-\tau}\nonumber\\
&&\times(|T|_{s_3}(s_4-s_3)^{-\tau}(1+\varepsilon\varsigma^{-1}|T|_{s_4})\|u\|_{s_4}\nonumber\\
&&+\varepsilon|T|_{s_1}|T|_{s_2}(s_3-s_2)^{-\tau}(1+\varepsilon\varsigma^{-1}|T|_{s_3})\|u\|_{s_3})\nonumber\\
&\leq&c(\varsigma,\tau,s_1,s_2,s_3,\gamma_1,\gamma)N^{\tau+\kappa_0}(s_3-s_2)^{-\tau}|T|_{s_3}(1+\varepsilon\varsigma^{-1}|T|_{s_4})\nonumber\\
&&\times((s_4-s_3)^{-\tau}\|u\|_{s_4}+\varepsilon|T|_{s_2}(s_3-s_2)^{-\tau}\|u\|_{s_3}),
\end{eqnarray}
\begin{eqnarray}\label{E4-26}
\|L_R^{-1}L_S^R\mathcal{L}^{-1}L_{R}^SL_R^{-1}u_R\|_{s_1}&\leq& c(\gamma,\tau,s_2)(s_2-s_1)^{-\tau}(1+\varepsilon\varsigma^{-1}|T|_{s_2})\|L_S^R\mathcal{L}^{-1}L_{R}^SL_R^{-1}u_R\|_{s_2}\nonumber\\
&\leq&c(\gamma,\tau,s_2)(s_2-s_1)^{-\tau}(1+\varepsilon\varsigma^{-1}|T|_{s_2})|T|_{s_2}\|\mathcal{L}^{-1}L_{R}^SL_R^{-1}u_R\|_{s_2}\nonumber\\
&\leq&c(\varsigma,\tau,s_1,s_2,s_3,\gamma_1,\gamma)N^{\tau+\kappa_0}(s_3-s_2)^{-\tau}(s_2-s_1)^{-\tau}|T|^2_{s_3}\nonumber\\
&&\times(1+\varepsilon\varsigma^{-1}|T|_{s_4})^2((s_4-s_3)^{-\tau}\|u\|_{s_4}\nonumber\\
&&+\varepsilon|T|_{s_2}(s_3-s_2)^{-\tau}\|u\|_{s_3}).
\end{eqnarray}
The terms $\|L_R^{-1}u_R\|_{s_1}$ and $\|\mathcal{L}^{-1}u_R\|_{s_1}$ can be controlled by using (\ref{E4-12}) and (\ref{E4-18}). Thus by (\ref{E4-23})-(\ref{E4-26}), for $0<s<\tilde{s}$, we conclude
\begin{eqnarray*}
\|(L^{(N)})^{-1}u\|_{s}\leq c(\varsigma,\tau,s,\tilde{s},\gamma_1,\gamma)N^{\tau+\kappa_0}(1+\varepsilon\varsigma^{-1}|T|_{\tilde{s}})^3(\tilde{s}-s)^{-\tau}\|u\|_{\tilde{s}},
\end{eqnarray*}
which together with Lemma 18 gives (\ref{E3-4}).

\section{Appendix}
For completeness, we give the proof of Lemma 17 (Measure estimates) and Lemma 18, which follows essentially the scheme of \cite{Berti2,Berti1,Bourgain1}.

\textbf{Proof of Lemma 17}.
Note that $|j+\overrightarrow{\rho}|\leq r$ and the eigenvalue of the operator $L_a^{(r)}$ has the form $\omega^2_j+1-\varepsilon a\omega_j^{2\varrho}-O(\varepsilon)$ of the operator $L_a^{(r)}$. Here $j=(j_1,j_2)\in\Lambda^+\times\textbf{Z}^n$. For sufficient small $\varepsilon_0\gamma^{-1}M^{\tau+2\varrho}$, by (\ref{E1-5}), all the eigenvalues of $L_a^{(r)}$ has modulus $\geq\gamma(4r^{\tau})^{-1}\geq\gamma_1(4r^{\kappa})^{-1}$. Thus $\mathcal{G}_r=[0,\delta_0)$ and the measure estimate (\ref{E8-3}) for $\mathcal{G}$ is standard. To prove the measure estimate (\ref{E8-4}), we divide the process of proof into two cases. For the case $N,N'\leq N_{\varepsilon_0}:=(c\gamma_1\varepsilon_0^{-1})^{\frac{1}{\tau+2\varrho}}$, $\mathcal{G}_{\gamma_1,\kappa_0}^{(N')}(u_2)=\mathcal{G}_{\gamma_1,\kappa_0}^{(N)}(u_1)=\mathcal{G}$, by the same process of proof of (\ref{E8-3}), one can prove (\ref{E8-4}) holds. For other cases, it is sufficient to prove
\begin{eqnarray*}
|(\mathcal{G}_{\gamma_1,\kappa_0}^{(N')}(u_2))^c\backslash(\mathcal{G}_{\gamma_1,\kappa_0}^{(N)}(u_1))^c\cap[\frac{\delta_1}{2},\delta_1)|\leq C\gamma_1\delta N^{-1},~~\forall \delta_1\in[0,\delta_0].
\end{eqnarray*}
For fixed $\delta_1$ and the decomposition $[0,\delta_0]=\cup_{n\geq1}[\delta_02^{-n},\delta_02^{-(n-1)}]$, we consider the complementary sets in $[\frac{\delta_1}{2},\delta_1)$
\begin{eqnarray*}
(\mathcal{G}_{\gamma_1,\kappa_0}^{(N')}(u_2))^c\backslash(\mathcal{G}_{\gamma_1,\kappa_0}^{(N)}(u_1))^c&=&(\mathcal{G}_{\gamma_1,\kappa_0}^{(N')}(u_2))^c\cap\mathcal{G}_{\gamma_1,\kappa_0}^{(N)}(u_1)\nonumber\\
&\subset&[\cup_{r\leq N}(\mathcal{G}_r^c(u_2)\cap\mathcal{G}_r(u_1)\cap\mathcal{G})]\cup[\cup_{r>N}\mathcal{G}_r^c(u_2)\cap\mathcal{G}].
\end{eqnarray*}
If $r\leq N_{\varepsilon_0}$, then $\mathcal{G}_r^c(u_2)\cap\mathcal{G}=\emptyset$ . So it is sufficient to prove that, if $\|u_1-u_2\|_{\bar{\sigma}}\leq N^{-e}$, $e\geq d+n+3$, then
\begin{eqnarray*}
\Omega:=\sum_{N_{\varepsilon}<r\leq N}|\mathcal{G}_r^c(u_2)\cap\mathcal{G}_r(u_1)|+\sum_{r>\max\{N,N_{\varepsilon}\}}|\mathcal{G}_r^c(u_2)|\leq C'\gamma_1\delta_1 N^{-1}.
\end{eqnarray*}
Note that $\|(L^{(r)}_a)^{-1}\|_0$ is the inverse of the eigenvalue of smallest modulus and
\begin{eqnarray*}
\|L_a^{(r)}(u_2)-L_a^{(r)}(u_1)\|_0=O(\varepsilon\|u_2-u_1\|_{s_0})=O(\varepsilon N^{-e}).
\end{eqnarray*}
The sufficient and necessary condition of an eigenvalues of $L_a^{(r)}(u_2)$ in $[-4\gamma_1 r^{-\tau}-C\varepsilon N^{-e},4\gamma_1 r^{-\tau}+C\varepsilon N^{-e}]$ is that there exists an eigenvalues of $L_a^{(r)}(u_1)$ in  $[-4\gamma_1 r^{-\tau},4\gamma_1 r^{-\tau}]$. Thus, it leads to
\begin{eqnarray*}
\mathcal{G}_r^c(u_2)\cap\mathcal{G}_r(u_1)\subset\{\delta\in[\frac{\delta_1}{2},\delta_1]|&&\exists~at~leat~an~eigenvalue~of~L_a^{(r)}(\delta,u_1)\\
&&~with~modulus~in~[4\gamma_1 r^{-\tau},4\gamma_1 r^{-\tau}+C\varepsilon N^{-e}]\}.
\end{eqnarray*}
Next we claim that if $\varepsilon$ is small enough and $I$ is a compact interval in $[-\gamma_1,\gamma_1]$ of length $|I|$, then
\begin{eqnarray}\label{EAA1}
|\{\delta\in[\frac{\delta_1}{2},\delta]~s.t.~at~least&&\exists~an~eigenvalue~of~L^{(r)}(\delta,u_1)~belongs~to~I\}|\nonumber\\
&&\leq Cr^{d+n+1}\delta_1^{-(2\rho-2)}|I|.
\end{eqnarray}
Due to the $C^1$ map $\delta\mapsto L^{(r)}(\delta,u_1)$ and the selfadjoint property of $L^{(r)}(\delta,u_1)$, we have the corresponding eigenvalue function $\lambda_{k}(\delta,u_1)$ with $1\leq k\leq r$. Denote the eigenspace of $L^{(r)}(\delta,u_1)$ by $E_{\delta,k}$ associated to $\lambda_k(\delta,u_1)$, then by $\|\partial_{\delta}b\|_s=\|(\partial_{u}^2f)(x,u)\|_s\leq C\gamma_1^{-1}$ and $\|\nabla^{\rho}h\|_0^2\geq\|h\|_0^2$, for sufficient small $0<\varepsilon\leq\varepsilon_0(\gamma_1)$, we have
\begin{eqnarray*}
(\partial_{\delta}\lambda_k(\delta,u_1))&\leq&\max_{h\in E_{\delta,k},\|h\|_0=1}\left((\partial_{\delta}L^{(r)})(\delta,u_1)h,h\right)_0\\
&\leq&\max_{h\in E_{\delta,k},\|h\|_0=1}\left((2\rho-1)\delta^{2\rho-2}(\triangle^{\rho}h,h)_0+O(\varepsilon\gamma_1^{-1})\right)\\
&\leq&\max_{h\in E_{\delta,k},\|h\|_0=1}\left(-(2\rho-1)\delta^{2\rho-2}\|\nabla^{\rho}h\|_0^2+O(\varepsilon\gamma_1^{-1})\right)\\
&\leq&\max_{h\in E_{\delta,k},\|h\|_0=1}\left(-(2\rho-1)\delta^{2\rho-2}\|h\|_0^2+O(\varepsilon\gamma_1^{-1})\right)\\
&\leq&-(2\rho-1)\delta^{2\rho-2}+O(\varepsilon\gamma_1^{-1})\leq-2(\rho-1)\delta_1^{2\rho-2}.
\end{eqnarray*}
Hence we have $|\lambda_{k}^{-1}(I,u_1)\cap[\frac{\delta_1}{2},\delta_1]|\leq C|I|\delta_1^{-(2\rho-2)}$. The claim holds.

Thus, we obtain
\begin{eqnarray*}
|\mathcal{G}_r^c(u_2)\cap\mathcal{G}_r(u_1)|\leq C\varepsilon r^{d+n+1}\delta_1^{-(2\rho-2)}N^{-e}
\leq C\delta_1N^{-e}r^{d+n+1}.
\end{eqnarray*}
Furthermore, by (\ref{EAA1}), we have $|\mathcal{G}_r^c(u_2)|\leq C\gamma_1r^{d+n-\tau+1}\delta_1^{-(2\rho-2)}$. Therefore, we obtain
\begin{eqnarray*}
\Omega&=&\sum_{N_{\varepsilon}<r\leq N}|\mathcal{G}_r^c(u_2)\cap\mathcal{G}_r(u_1)|+\sum_{r>\max\{N,N_{\varepsilon}\}}|\mathcal{G}_r^c(u_2)|\\
&\leq&C\delta_1(\sum_{r\leq N}r^{d+n+1})N^{-e}+C\gamma_1\delta_1^{-(2\rho-2)}\sum_{r>\max\{N,N_{\varepsilon}\}}r^{d+n-\tau+1}\\
&\leq&C'\left(\delta_1N^{d+n-e+2}+\gamma_1\delta_1^{-(2\rho-2)}(\max\{N,N_{\varepsilon}\})^{d+n-\tau+2}\right)\\
&\leq&C''\gamma_1\delta_1N^{-1},
\end{eqnarray*}
where $C$, $C'$ and $C''$ denote constants. This completes the proof.

\textbf{Proof of Lemma 18}.
The key step of Lemma 18 is the following Theorem 8.  To prove Theorem 8, we only need to give the proof of Lemma 25, the remainder of the proof is the same as \cite{Berti1}, so we omit it.

Define the bilinear symmetric form $\phi_{\varepsilon}:\textbf{R}^{r+n}\times\textbf{R}^{r+n}\longrightarrow\textbf{R}$ by
\begin{eqnarray*}
\phi_{\varepsilon}(x,x'):=J\cdot J'-\varepsilon aJ^*\cdot J^{*'},~~\forall J\in\textbf{R}^{r+n},
\end{eqnarray*}
where $x=(J,J^*)$, $x'=(J',J^{*'})\in\textbf{R}^{r+n}\times\textbf{R}^{r+n}$ and choose $J^*\in\textbf{R}^{r+n}$ such that the corresponding quadratic form
\begin{eqnarray*}
\Phi_{\varepsilon}(x)=\phi_{\varepsilon}(x,x)=|J|^2-\varepsilon a|J|^{2\varrho}.
\end{eqnarray*}
Denote $x=j'+\overrightarrow{\rho'}=(J,J^*)$, where $\overrightarrow{\rho'}=(\rho,0,0,0)$, $\forall j'=(j_1,j_2,j^*_1,j_2^*)\in\Lambda^+\times\textbf{Z}^n\times\Lambda^+\times\textbf{Z}^n$ and $x\in\Lambda^{++}\times\textbf{Z}^n\times\Lambda^+\times\textbf{Z}^n$ since $j_1\in\Lambda^+$ and $\Lambda^{++}=\rho+\Lambda^+$.
Note that $\Phi_{\varepsilon}(j'+\overrightarrow{\rho'})=D_j+|\rho|^2$, where $D_j$ are the small divisors. We say a vector $x=(j'+\rho,j^{*'})\in\Lambda^{++}\times\textbf{Z}^n\times\Lambda^{++}\times\textbf{Z}^n$ is ''weak singular'' if $|\Phi_{\varepsilon}(x)|\leq C$ for some constant $C$ fixed once and for all.

\textbf{Definition 3.}
A sequence $x_0,x_1,\cdots,x_K\in\Lambda^{++}\times\textbf{Z}^n\times\Lambda^{++}\times\textbf{Z}^n$ of distinct, weakly singular vectors satisfying, for some $B\geq2$, $|x_{k+1}-x_k|\leq B$, $\forall k=0,1,\cdots,K-1$, is called a $B$-chain of length $K$.

\begin{theorem}
Assume that $\varepsilon$ satisfies (\ref{E1-4}). Then any $B$-chain has length $K\leq B^C\gamma^{-p}$ for some $C:=C(G)>0$ and $p:=p(G)>0$.
\end{theorem}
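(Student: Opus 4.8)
The plan is to follow the scheme of Bourgain and of Berti--Procesi \cite{Bourgain1,Berti1} for bounding chains of small divisors, the only genuinely new point being the bookkeeping forced by the perturbed symbol; the interesting regime is $\varrho\geq2$, in which the near-resonant set $\{|J|^2\approx\varepsilon a|J|^{2\varrho}\}$ is a genuine sphere of radius $\sim(\varepsilon a)^{-1/(2\varrho-2)}$. Via the completion $J\mapsto x=(J,J^*)$ with $|J^*|^2=|J|^{2\varrho}$ the symbol becomes the quadratic form $\Phi_\varepsilon$, with associated non-degenerate bilinear form $\phi_\varepsilon$, and a weakly singular $x$ is exactly one with $|\Phi_\varepsilon(x)|\leq C$. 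First I would record the arithmetic: by Lemma 2, $|J|^2\in D^{-1}\textbf{Z}$ for every lattice vector $J\in\Lambda^{++}\times\textbf{Z}^n$, hence $|J|^{2\varrho}=(|J|^2)^\varrho\in D^{-\varrho}\textbf{Z}$, so
\[
D^{\varrho}\Phi_\varepsilon(x)=m-\varepsilon a\,n,\qquad m,\,n\in\textbf{Z},
\]
with $n\neq0$ unless $J=0$. The Diophantine hypothesis (\ref{E1-4}) then gives a lower bound on $|m-\varepsilon a n|$ in terms of $|n|$, and the whole argument amounts to manufacturing, out of a hypothetically long chain, an integer relation on which (\ref{E1-4}) forces a contradiction; the resulting $|n|^{-\tau_0}$ losses become the factor $\gamma^{-p}$.

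The core is an induction on $d'=\dim A$, where $A$ is the affine hull of $\{x_0,\dots,x_K\}$ and $d'\leq 2(r+n)$: the claim is that a $B$-chain with affine hull of dimension $d'$ has length $K\leq B^{C(d')}\gamma^{-p(d')}$. The chain lies in a ball of radius $KB$ and, telescoping, $|\Phi_\varepsilon(x_\ell)-\Phi_\varepsilon(x_k)|\leq 2C$ for all $k<\ell$. For the inductive step I would dichotomize on the quadratic part $Q_A$ of $\Phi_\varepsilon|_A$. If $Q_A$ is uniformly non-degenerate on $A$ (its smallest relevant invariant bounded below by a fixed negative power of $B$ times $\gamma_0$), then inside the ball of radius $R\sim KB$ the weakly singular sites of $A$ lie in a shell of width $O(C/R)$ about a $(d'-1)$-dimensional quadric; a count of lattice points in such a shell that are linked by $B$-steps, together with the telescoping bound, caps the number of distinct $x_k$, hence $K$. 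If instead $Q_A$ has an almost-null direction, I would pick a smallest integer vector $e\in A-A$ realizing it: were $Q_A(e)=0$ exactly, Lemma 2 would force an identity $m=\varepsilon a n$ with $n\neq0$, contradicting (\ref{E1-4}); quantitatively $|Q_A(e)|\geq c\,\gamma_0|n|^{-\tau_0}$ with $|n|$ bounded by a fixed power of $KB$, so $\Phi_\varepsilon$ is genuinely curved along $e$ and the $e$-extent of the chain inside $\{|\Phi_\varepsilon|\leq C\}$ is bounded polynomially in $KB$ and $\gamma_0^{-1}$; slicing $A$ by the level hyperplanes of the $e$-coordinate and applying the inductive hypothesis to each $(d'-1)$-dimensional slice (after checking that the slices are again $B'$-chains with $B'$ a controlled dilate of $B$) closes the induction. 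Unwinding the $\leq 2(r+n)$ levels of recursion yields $K\leq B^{C}\gamma^{-p}$ with $C=C(G)$, $p=p(G)$ depending only on $\dim G$, $r$, $n$, the lattice constant $D$ and the fixed $\tau_0$, and independent of $B$, of the chain and of small $\varepsilon$.

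I expect the genuine obstacle to be the degenerate case, that is, controlling a chain running nearly parallel to a null direction of $\Phi_\varepsilon|_A$. Two points need care: (i) the quantification of ``nearly null'' requires the integer vector $e$ — and hence the denominator $|n|$ entering (\ref{E1-4}) — to be produced with norm bounded by a fixed power of the current chain length, which is circular with the bound on $K$ being proved and must be broken by a bootstrap, feeding a crude preliminary bound on $K$ back into the Diophantine estimate; (ii) the descent must be arranged so that intersecting the chain with the constant-$e$-coordinate hyperplanes genuinely produces $B'$-chains in one dimension less, with $B'$ staying under control through the iteration. Once the descent is in place, the non-degenerate case, the lattice-point count near the resonant sphere, and the base case $d'\leq1$ — where $\Phi_\varepsilon$ restricted to a line is a polynomial of degree $\leq2\varrho$ whose sub-level set $\{|\cdot|\leq C\}$ has $O_\varrho(1)$ components of length controlled by a power of $\gamma_0^{-1}$ as soon as (\ref{E1-4}) rules out near-flat directions — are all routine. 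Finally Theorem 8 feeds back into Lemma 18 exactly as in \cite{Berti1}: defining the clusters $\Omega_\alpha$ as the connected components of the singular sites under a relation ``$|j-j'|\leq$ (suitable power of $M_\alpha+M_\beta$)'', Theorem 8 forces each component to have diameter $\leq B^{C+1}\gamma^{-p}$, which yields both the dyadic property and the separation estimate.
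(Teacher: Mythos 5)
Your route is genuinely different from the paper's, and as written it has gaps that are more than technical. The paper does not reprove the chain bound geometrically: it imports the Berti--Procesi chain argument wholesale and isolates the single new ingredient, Lemma 25, namely the quantitative invertibility bound $\|A_{\varepsilon}^{-1}\|\leq c(m,D)\gamma^{-1}(\max_{i}|f_i|)^{5m-2}$ for the Gram matrix $A_{\varepsilon}=R-\varepsilon aS$ of $\phi_{\varepsilon}$ restricted to the span of chain differences. That lemma is precisely where the Diophantine condition (\ref{E1-4}) and the integrality of Lemma 2 enter (through the polynomial $D^m\det A_{\varepsilon}$ in $\varepsilon$ with integer coefficients, bounded below via (\ref{E5-1R}) and Cramer's rule), and it is the only point affected by the fact that the perturbation $S$ is no longer of rank one. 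Your induction on the dimension of the affine hull bypasses this lemma entirely, so you must supply a substitute quantitative non-degeneracy statement, and you never do.

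The concrete gaps are these. First, both branches of your dichotomy are circular in $K$. In the non-degenerate case you count lattice points in a shell of width $O(C/R)$ with $R\sim KB$; the number of such points grows like a positive power of $KB$, so the resulting inequality has the shape $K\lesssim (KB)^{\theta}\gamma^{-q}$ and is vacuous unless $\theta<1$, which you neither arrange nor can expect for general $\tau_0$ and dimension. In the degenerate case the integer vector $e$ carries a denominator $|n|$ bounded only by a power of $KB$, so the number of hyperplane slices is again a power of $K$; the ``bootstrap'' you invoke needs a crude a priori bound on $K$ to get started, and none exists (a $B$-chain sits in a ball of radius $KB$, so the trivial lattice count is itself circular). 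Second, on the extended space the form $\Phi_{\varepsilon}(x)=|J|^2-\varepsilon a|J^*|^2$ is indefinite, so the weakly singular set is a thin neighborhood of a cone/hyperboloid, not of a sphere: such neighborhoods do contain long strings of lattice points at mutual distance $\leq B$ along near-null directions, which is exactly the configuration the theorem must exclude, so the ``routine'' shell count in your non-degenerate case is not available. Third, you do not verify that intersecting a chain with the level sets of the $e$-coordinate produces $B'$-chains with controlled $B'$ (consecutive points of the original chain need not lie in the same slice, and points of a slice need not be consecutive in the chain), so the inductive hypothesis cannot be applied as stated. Either these three points must be repaired, or one should follow the paper: reduce to the Berti--Procesi scheme and prove only the determinant lower bound of the type of Lemma 25.
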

Using Lemma 2, we can easily prove the following result. It can be found in \cite{Berti1}.
\begin{lemma}
Let $\mathcal{M}=(G\times\textbf{T}^n)/N$. The matrices $R$ and $S$ have coefficient in $D^{-1}\textbf{Z}$ for some $D\in\textbf{N}$.
\end{lemma}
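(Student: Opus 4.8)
The plan is to identify the matrices $R$ and $S$ explicitly with Gram matrices of the fundamental weights of $G$ (augmented by the integer identity on the torus block) and then to read off the conclusion directly from Lemma 2, exactly as the phrase ``Using Lemma 2'' suggests. Writing $j_1=\sum_{i=1}^r n_iw_i$ with $n_i\in\textbf{N}$ and $\rho=\sum_{i=1}^r w_i$, one has $J=(j_1+\rho,j_2)$ with $j_1+\rho=\sum_{i=1}^r(n_i+1)w_i$, hence
\[
|J|^2=\sum_{i,k=1}^r(n_i+1)(n_k+1)(w_i,w_k)+|j_2|^2.
\]
Thus, in the integer lattice coordinates $(n_i+1)_{i=1,\dots,r}$ on the $G$-factor together with the standard integer coordinates on the $\textbf{T}^n$-factor, the part $J\cdot J'$ of the bilinear form $\phi_\varepsilon$ is represented by the block matrix $R$ whose $G$-block is the Gram matrix $\big((w_i,w_k)\big)_{i,k=1}^r$ and whose torus block is the $n\times n$ identity; the matrix $S$ governing the $J^\ast\cdot J^{\ast\prime}$ part has the same structure, its weight block again being a Gram matrix of fundamental weights of $G$ --- note that the dual representation $V_j^\ast$ corresponds to a dominant weight lying in the same cone $\Lambda^+$, so no new group is introduced.

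First I would invoke Lemma 2: for the simply connected compact Lie group $G$ there exists $D\in\textbf{N}$ with $(w_i,w_k)\in D^{-1}\textbf{Z}$ for all $i,k=1,\dots,r$. This places every entry of the weight block of $R$, and of $S$, into $D^{-1}\textbf{Z}$, while the torus blocks have entries in $\textbf{Z}\subset D^{-1}\textbf{Z}$. Hence all coefficients of both $R$ and $S$ lie in $D^{-1}\textbf{Z}$, which is the assertion. The same $D$ also controls any coefficients involving the simple roots $\alpha_j$: the Cartan matrix has integer entries, so $(\alpha_i,\alpha_j)$ and $(w_i,\alpha_j)$ lie in $D^{-1}\textbf{Z}$ as well.

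The proof has no genuine obstacle --- it is a bookkeeping reformulation of Lemma 2 once $\phi_\varepsilon$ (equivalently $\Phi_\varepsilon$) is written out in coordinates. The two points needing a little care are (i) to use \emph{integer} lattice coordinates, so that it is the matrix entries, rather than the values of the forms on arbitrary real vectors, that are pinned down to $D^{-1}\textbf{Z}$, and (ii) to employ a single common denominator $D$ for $R$, for $S$, and for the $G$- and $\textbf{T}^n$-blocks simultaneously, which is automatic since Lemma 2 already supplies one $D$ valid for all the inner products $(w_i,w_k)$.
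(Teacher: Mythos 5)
Your proposal is correct and takes essentially the same route the paper intends: the paper offers no proof beyond the remark that the statement follows from Lemma 2 (and the citation of Berti--Procesi), and your argument is exactly that reduction, since every entry of $R$ and $S$ is a $\textbf{Z}$-bilinear combination of the inner products $(w_i,w_k)\in D^{-1}\textbf{Z}$ and of integer dot products coming from the $\textbf{T}^n$-factor. The only small discrepancy is that in the paper $R=\{\varphi(f_i,f_{i'})\}_{i,i'=1}^m$ and $S=\{\varphi^*(f_i,f_{i'})\}_{i,i'=1}^m$ are the $m\times m$ Gram matrices of the given lattice vectors $f_i\in\Lambda\times\textbf{Z}^n\times\Lambda\times\textbf{Z}^n$ rather than the fixed $(r+n)\times(r+n)$ block matrix you describe, but since each $f_i$ has integer coordinates in the weight/torus basis, its Gram entries are integer combinations of your matrix entries and the conclusion is unchanged.
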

Given lattice vectors $f_i\in\Lambda\times\textbf{Z}^n\times\Lambda\times\textbf{Z}^n$, $i=1,\cdots,n$, $1\leq m\leq 2r+2n$, linearly dependent on $\textbf{R}$, we consider the subspace $F:=Span_{\textbf{R}}\{f_1,\cdots,f_m\}$ of $\textbf{R}^{r+n}\times\textbf{R}^{r+n}$ and the restriction $\phi_{\varepsilon}|_{F}$ of the bilinear form $\phi_{\varepsilon}$ to $F$, which is represented by the symmetric matrix $A_{\varepsilon}:=\{\phi_{\varepsilon}(f_i,f_{i'})\}_{i,i'=1}^m$. Denote $\varphi(x,x'):=J\cdot J'$ and $\varphi^*(x,x'):=J^*\cdot J^{*'}$ the symmetric bilinear forms. Then we rewrite
\begin{eqnarray*}
\Phi_{\varepsilon}=\varphi-\varepsilon a\varphi^*,~~A_{\varepsilon}=R-\varepsilon aS,
\end{eqnarray*}
where
\begin{eqnarray*}
R:=\{\varphi(f_i,f_{i'})\}_{i,i'=1}^m=(a_{ii'})_{i,i'=1}^m,~~S:=\{\varphi^*(f_i,f_{i'})\}_{i,i'=1}^m=(b_{ii'})_{i,i'=1}^m
\end{eqnarray*}
 are the matrices that represent, respectively, $a_{ii'}$ and $b_{ii'}$, $i,i'=1,\cdots,m$ denote the element, respectively, of $R$ and $S$,
$\varphi|_{F}$ and $\varphi^*|_{F}$ in the basis $\{f_1,\cdots,f_m\}$.

Since the matrix $S$ is not at most rank 1, the proof of following result is some what different from the proof of Lemma A.3 in \cite{Berti1}. But the main idea is the same.
\begin{lemma}
Assume that $a$ satisfies (\ref{E1-4}). Then $A_{\varepsilon}$ satisfies
\begin{eqnarray*}
\|A_{\varepsilon}^{-1}\|\leq\frac{c(m,D)}{\gamma}(\max_{i=1,\cdots,m}|f_i|)^{5m-2},
\end{eqnarray*}
where $c(m,D)$ is a constant depending on $m$ and $D$.
\end{lemma}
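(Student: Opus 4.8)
The plan is to follow the scheme of Lemma~A.3 in \cite{Berti1}, reducing the bound on $\|A_\varepsilon^{-1}\|$ to a lower bound for $|\det A_\varepsilon|$ and then reading that lower bound off from the arithmetic of the entries of $R$ and $S$. For the reduction I may assume $f_1,\dots,f_m$ are linearly independent over $\textbf{R}$ --- otherwise discard the redundant vectors, which only lowers $m$ and never increases $\max_i|f_i|$. Writing $f_i=(\xi_i,\eta_i)$ with $\xi_i,\eta_i\in\textbf{R}^{r+n}$, the matrices $R=(\xi_i\cdot\xi_{i'})_{i,i'}$ and $S=(\eta_i\cdot\eta_{i'})_{i,i'}$ are Gram matrices, hence positive semidefinite, and every entry of $A_\varepsilon=R-\varepsilon aS$ has modulus $\le(1+\varepsilon|a|)(\max_i|f_i|)^2\le c\,(\max_i|f_i|)^2$ for $\varepsilon$ small. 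Hence every entry of $\mathrm{adj}(A_\varepsilon)$, being an $(m-1)\times(m-1)$ minor, is bounded by $c(m)(\max_i|f_i|)^{2m-2}$; since $A_\varepsilon$ is symmetric, $A_\varepsilon^{-1}=\mathrm{adj}(A_\varepsilon)/\det A_\varepsilon$ yields $\|A_\varepsilon^{-1}\|\le c(m)(\max_i|f_i|)^{2m-2}/|\det A_\varepsilon|$, so it suffices to show $|\det A_\varepsilon|\ge c(m,D)^{-1}\gamma\,(\max_i|f_i|)^{-3m}$.

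The key is to view the determinant as a controlled integer polynomial. Put $P(t):=\det(R-tS)$ and $\widetilde P(t):=D^{m}P(t)$, so that $\det A_\varepsilon=D^{-m}\widetilde P(\varepsilon a)$. By the preceding lemma the entries of $R$ and $S$ lie in $D^{-1}\textbf{Z}$, hence $\widetilde P\in\textbf{Z}[t]$; each of its coefficients is $D^{m}$ times a signed sum of $m\times m$ minors formed from entries of $R$ and $S$, so it is a rational integer of modulus $\le c(m,D)(\max_i|f_i|)^{2m}$ (Hadamard's bound). Moreover $\widetilde P\not\equiv0$: since $\sum_ic_i\xi_i=0$ together with $\sum_ic_i\eta_i=0$ forces $\sum_ic_if_i=0$, the kernels of the positive semidefinite matrices $R$ and $S$ meet only in $0$, so $R+S$ is positive definite and $\widetilde P(-1)=D^{m}\det(R+S)>0$. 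Diagonalizing $R$ and $S$ simultaneously by congruence (possible because $R+S$ is positive definite), one sees that $\deg\widetilde P=\mathrm{rank}\,S=:q$ and that all roots of $\widetilde P$ are real. If $q=0$, then $S=0$, $R$ is positive definite, and $\det A_\varepsilon=\det R$ is a nonzero element of $D^{-m}\textbf{Z}$, so $|\det A_\varepsilon|\ge D^{-m}$ --- far better than required.

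Assume now $q\ge1$ and write $\widetilde P(t)=c_q\prod_{k=1}^{q}(t-t_k)$ with $c_q\in\textbf{Z}\setminus\{0\}$ and real roots $t_k$. In the case $q=1$ --- the situation of \cite{Berti1} --- $t_1=-c_0/c_1$ is rational, $|c_1|\ge1$, $|c_0|\le c(m,D)(\max_i|f_i|)^{2m}$, and $|\widetilde P(\varepsilon a)|=|c_1\varepsilon a+c_0|$ is bounded below by $c(m,D)^{-1}\gamma\,(\max_i|f_i|)^{-3m}$ on applying the diophantine condition (\ref{E1-4}) --- in the quantitative form (\ref{E5-1R}), whose $\max(1,|\cdot|^{3/2})$ denominator is exactly what makes the resulting power of $\max_i|f_i|$ linear in $m$, with the explicit factor $\varepsilon$ absorbed just as in the passage from (\ref{E1-4}) to (\ref{E1-5}) --- to the pair $(c_0,c_1)$; hence $|\det A_\varepsilon|=D^{-m}|\widetilde P(\varepsilon a)|$ satisfies the required bound. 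For $q\ge2$ I would estimate each factor $|\varepsilon a-t_k|$ from below in the same way, using that the $t_k$ are real algebraic numbers of degree $\le q\le m$ and bounded height (the minimal polynomial of each divides $\widetilde P$) arising from a pencil of two positive semidefinite rational forms; multiplying the at most $m$ factors keeps the total loss a power of $\max_i|f_i|$ linear in $m$, and with the reduction above this produces the exponent $5m-2=(2m-2)+3m$.

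The main obstacle is precisely the case $q=\mathrm{rank}\,S\ge2$. In \cite{Berti1}, $S$ has rank one, so $\det(R-\varepsilon aS)$ is affine in $\varepsilon a$ and a single rational root $-c_0/c_1$ settles the matter; here one must control all of the --- in general irrational --- roots of the pencil $(R,S)$ simultaneously and rule out that $\varepsilon a$ is anomalously close to any of them. The arithmetic input of the preceding lemma, namely that the entries of $R$ and $S$ lie in $D^{-1}\textbf{Z}$, is what bounds the heights of those roots and thereby keeps the final exponent linear in $m$; turning this into a quantitative estimate --- rather than a merely qualitative non-degeneracy of $\phi_\varepsilon|_F$ --- is the one genuinely delicate point.
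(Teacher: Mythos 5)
Your overall reduction is the same as the paper's: invert via Cramer/adjugate (giving the factor $(\max_i|f_i|)^{2m-2}$), observe by the preceding lemma that $D^m\det(R-tS)$ has integer coefficients, bound $0\le\det R\le\det(R+S)=(\det\xi)^2\le M^{2m}$ by the Gram/Hadamard argument, and reduce everything to a lower bound $|\det A_\varepsilon|\gtrsim\gamma\,M^{-3m}$, which indeed produces the exponent $5m-2$. The problem is that you never actually prove that lower bound in the case you yourself identify as the whole point of the lemma, namely $q=\mathrm{rank}\,S\ge2$. Your sketched strategy --- factor $\widetilde P(t)=c_q\prod_k(t-t_k)$ and bound each $|\varepsilon a-t_k|$ from below ``in the same way'' using that the $t_k$ are real algebraic of degree $\le m$ and bounded height --- cannot be run from the stated hypotheses: (\ref{E1-4}) and (\ref{E5-1R}) only bound $|m-an|$ for integer pairs $(m,n)$, i.e. they control the approximation of $a$ by rationals, and give no lower bound at all on the distance from $\varepsilon a$ to an \emph{irrational} algebraic root of the pencil $(R,S)$; bounded degree and height of $t_k$ do not substitute for a Diophantine condition of $a$ relative to algebraic numbers, which is nowhere assumed. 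Moreover, even granting such per-root bounds, multiplying $q$ of them would degrade the constant to $\gamma^{q}$ (or worse) in the denominator, whereas the lemma asserts a bound with a single power $\gamma^{-1}$; your ``total loss linear in $m$'' remark addresses the power of $M$ but not the power of $\gamma$. So the genuinely delicate point is not merely flagged --- it is left open, and the route you indicate would fail as stated.

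For comparison, the paper does not factor the determinant into roots at all. It expands $\det A_\varepsilon=\det(R-\varepsilon aS)=\det R+(-1)^m\varepsilon^ma^m\det S+P'(\varepsilon)$, notes that $\det R,\det S$ and the intermediate coefficients lie in $D^{-m}\textbf{Z}$, checks $P\not\equiv0$ via $\det(R+a^2S)>0$, and then splits into two cases on the single number $\det R$: if $\det R=0$ the leading behaviour in $\varepsilon a$ gives the bound directly, while if $\det R\neq0$ the strengthened condition (\ref{E5-1R}) is applied \emph{once}, with $\det R$ (a nonzero element of $D^{-m}\textbf{Z}$) playing the role of the ``integer'' $m$ in $|m-an|\ge\gamma_1/\max(1,|m|^{3/2})$; the Hadamard estimate $|\det R|\le M^{2m}$ then converts the $|\det R|^{3/2}$ loss into the power $M^{3m}$, and Cramer's rule finishes. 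In other words, the paper's way around the rank-$\ge2$ obstruction is to keep the whole $\varepsilon$-dependent part lumped together and use the arithmetic of $\det R$ alone, rather than to control $\varepsilon a$ against each root of the characteristic polynomial; if you want to complete your argument you should follow that structure (or introduce an additional assumption on $a$ strong enough to handle algebraic approximations, which the lemma does not grant you).
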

\begin{proof}
Direct calculation shows
\begin{eqnarray}\label{E5-1}
&&\det A_{\varepsilon}=\det(R-\varepsilon aS)\nonumber\\
&=&\sum_{j_1,j_2,\cdot,j_m}(-1)^{\tau(j_1,j_2,\cdot,j_m)}(a_{1j_1}-\varepsilon ab_{1j_1})(a_{2j_2}-\varepsilon ab_{2j_2})\cdots (a_{mj_m}-\varepsilon ab_{mj_m})\nonumber\\
&=&\det R+(-1)^n\varepsilon^ma^m\det S+P'(\varepsilon),
\end{eqnarray}
where $P'(\varepsilon)$ is a polynomial on $\varepsilon$ of degree $m-1$ with integer coefficients (by Lemma 23), and $\tau(j_1,j_2,\cdot,j_m)$ is the rank of $j_1,\cdots,j_m$.

Note that $\det R,\det S,P'(1)\in D^{-m}\textbf{Z}$. By Lemma 23,
$D^m\det A_{\varepsilon}=P(\varepsilon)$ is a polynomial on $\varepsilon$ of degree $n$ with integer coefficients. It follows from $P(-a)=D^m\det(R+a^2S)$ that $P(\cdot)\neq0$.
By (\ref{E5-1}), if $\det R=0$, then $|\det A_{\varepsilon}|\geq\varepsilon^na^nD^{-m}$. If $\det R\neq0$, then by (\ref{E5-1R}), we have
\begin{eqnarray}\label{E5-2}
|\det A_{\varepsilon}|\geq\gamma D^{\frac{5m}{2}}|\det R|^{-\frac{3}{2}}.
\end{eqnarray}
We can write $R+a^2S=\xi^T\xi$ with $\xi=(f_1,\cdots,f_m)$. Thus we have
\begin{eqnarray}\label{E5-3}
0\leq\det R\leq\det(R+S)=(\det\xi)^2\leq|f_1|^2\cdots|f_m|^2\leq M^{2m},
\end{eqnarray}
where $M:=\max_{i=1,\cdots,m}|f_i|$.

By(\ref{E5-2})-(\ref{E5-3}), we derive
\begin{eqnarray}\label{E5-4}
|\det A_{\varepsilon}|\geq\gamma D^{\frac{5m}{2}}|\det R|^{-3m}.
\end{eqnarray}
Note that (\ref{E5-1R}), $a\geq\gamma$ and (\ref{E5-4}) hold. Using the Cramer rule and (\ref{E5-4}), we can obtain the main result.
This completes the proof.
\end{proof}

\begin{acknowledgements}
This work was done in Beijing International Center for Mathematical Research, Peking University.
The first author expresses his sincere thanks to Prof Gang Tian for his suggestion and encouragement!
We also express our sincerely thanks to prof P.H. Rabinowtiz for his many kind suggestions and help, and inform us the papers of T. Kato\cite{K} and Q. Han, J.X. Hong and C.S. Lin\cite{H}. This first author is supported by NSFC Grant 11201172, Post-doctor fund 2012M510243 and 985 project of Jilin
University. The second author is supported by NSFC Grant 10531050, National 973 Project of
China 2006CD805903, SRFDP Grant 20040183030, the 985 Project of Jilin
University.
\end{acknowledgements}

\end{document}